\newtheorem{thm}{Theorem}
\newtheorem{prop}{Proposition}
\newtheorem{ex}{Example}
\newtheorem{re}{Remark}
\newtheorem{lem}{Lemma}
\newtheorem{cor}{Corollary}
\title{An analogue of the Robinson-Schensted-Knuth correspondence
and non-symmetric Cauchy kernels for truncated staircases}
\author{Olga Azenhas and Aram Emami}
\date{}
\begin{document}
\maketitle

\begin{abstract}
We prove a restriction of  an analogue of the Robinson--Schensted--Knuth  correspondence for semi-skyline augmented
fillings, due to Mason, to  multisets  of
cells of a   staircase  possibly truncated by a smaller
 staircase  at the upper left end corner, or  at  the bottom right
 end corner.
 The restriction to be imposed on the pairs of semi-skyline augmented fillings is that the pair of shapes, rearrangements of each other,
satisfies an inequality in the Bruhat
order,  w.r.t. the symmetric group,
 where one shape is bounded by the reverse of the other. For semi-standard  Young  tableaux the inequality means that the pair of their right keys is such that one key is bounded by the Sch\"utzenberger evacuation of the other. This bijection is then used to obtain 
 an expansion formula  of the non-symmetric Cauchy kernel,
 over staircases or truncated staircases,
 in  the basis of Demazure characters
of type $A$, and the basis of  Demazure atoms. The expansion implies
Lascoux expansion formula,  when specialised to staircases or truncated staircases,  and make explicit, in the latter,   the
 Young tableaux  in the Demazure crystal   by interpreting Demazure operators  via elementary bubble sorting operators acting on weak compositions.
\end{abstract}
{\bf Keywords: }Young tableau, semi-skyline augmented filling, analogue of the Robinson-Schensted-Knuth correspondence, isobaric divided differences, Demazure character, Demazure atom,  Demazure crystal graph, nonsymmetric Cauchy kernels.

MSC[2010] Primary 05E05. Secondary 05E10, 17B37
\section{Introduction and statement of results}
\label{sec:in}
The purpose of this paper is to give a bijective proof via the
standard Robinson--Schensted--Knuth (RSK)-type bijection \cite{knuth} for the
truncated staircase shape version of the Cauchy identity, due to Lascoux, where Schur polynomials are replaced by Demazure characters and Demazure atoms \cite{lascouxcrystal,fulascoux}.  To this aim   we build on the interesting analogue of
the RSK bijection, recently given by  Mason \cite{masonrsk},  where semi-standard tableaux (SSYTs) are replaced by
 semi-skyline augmented fillings (SSAFs). The later combinatorial objects are coming from the
Haglund--Haiman--Loehr  formula for non-symmetric Macdonald polynomials \cite{hagnon}. The  RSK correspondence is an important combinatorial bijection between two line arrays of positive integers (or non-negative integer matrices)
 and pairs of SSYTs of the same shape with applications to the representation theory of the Lie algebra $\mathfrak{gl}_n$, and to the theory of symmetric functions among others.  Mason's
 bijection has the advantage of giving information about the
filtration of irreducible representations of $\mathfrak{gl}_n$ by Demazure modules,
which is detected by the  key of a SSYT, after Lascoux and Sch\"utzenberger \cite{lasschutz,lascouxkeys}, and manifested in  the shape of  a SSAF \cite{masondemazure}. Although a   general Ferrers shape version of the Cauchy identity had been
given by Lascoux in \cite{lascouxcrystal}, aside from  the staircase shape, the characterization of the pairs of SSYTs  is expressed in a less explicit way.  Regarding to the shapes to be considered here, our expansions are explicit.  Lascoux's proof  in \cite{lascouxcrystal} and Fu--Lascoux's proof in \cite{fulascoux} (in type~$A$ case)
are different from the standard bijective proof based on an RSK-type
correspondence, which  is precisely what is done here.

\subsection{Crystals, Demazure crystals and keys}
Given the general Lie algebra  $\mathfrak{gl}_n(\mathbb{C})$, and
its quantum group $U_q(\mathfrak{gl}_n)$ - the $q$-analogue of the universal enveloping algebra $U(\mathfrak{gl}_n)$ -
finite-dimensional representations of $U_q(\mathfrak{gl}_n)$ are
also classified by the highest weight. Let $\lambda$ be a dominant
integral weight (i.e. a partition), and $V(\lambda)$ the
integrable representation with highest weight $\lambda$, and
$u_\lambda$ the highest weight vector. For a given permutation $w$
in the symmetric group $\mathfrak{S}_n$, the shortest
 in its class modulo the stabiliser of $\lambda$, the Demazure
module is defined to be $V_w(\lambda):=U_q(\mathfrak{g})^{>0}.u_{w
\lambda},$ and the Demazure character is the character of
$V_w(\lambda)$. (We refer the reader to \cite{hongkang,kwon} for details.)
In the early $90$'s Kashiwara \cite{kashi,kashiwaraq} has associated with $\lambda$ a crystal
 $\mathfrak{B}^\lambda$, which can be realised in type $A$ as  a coloured
directed graph having vertices  all 
SSYTs of shape $\lambda$ with entries $\le n$,
and  arrows $P \overset{i}\rightarrow P'$ if and only $f_iP = P'$, for each crystal (coplactic) operator $f_i$, $1\le i<n$.
 The coloured directed graph $\mathfrak{B}^\lambda$ reflects the combinatorial structure of the  given integrable representation $V(\lambda)$ and the relationship between  $\mathfrak{B}^\lambda$ and the module  $V(\lambda)$  can be made precise using the notion of crystal basis for $V(\lambda)$ \cite{kashi,kashiwaraq}.  Littelmann  conjectured \cite{littel} and
Kashiwara  proved \cite{kashiwara} that the intersection of a crystal basis of
$V_\lambda$ with $V_w(\lambda)$ is a crystal basis for
$V_w(\lambda)$. The resulting subset $\mathfrak{B}_{w
\lambda}\subseteq \mathfrak{B}^\lambda$ is called Demazure crystal, and
the Demazure character corresponding to $\lambda$ and $w$, is  the polynomial combinatorially expressed by the SSYTs in  the
 Demazure crystal $\mathfrak{B}_{w \lambda}$. These polynomials are the  key polynomials corresponding to $\lambda$ and $w$  in Reiner-Shimozono's work \cite{reiner}.

  The irreducible representations of $\mathfrak{gl}_n$ have then a filtration by Demazure modules, compatible with the Bruhat order of $\mathfrak{S}_n$ and the crystal structure.
 That is,  $\mathfrak{B}_{w'\lambda}\subset \mathfrak{B}_{w\lambda}$ whenever  $w'< w$ in the Bruhat order on the classes modulo the stabiliser of $\lambda$ and $\mathfrak{B}^{\lambda}=\bigcup_{ w\in\mathfrak{S}_n}{\mathfrak{B}}_{w\lambda}$. In particular, if $\omega$ is the longest permutation of $\mathfrak{S}_n$, $\mathfrak{B}^{\lambda}=\mathfrak{B}_{\omega\lambda}$.
 Given that the Schur polynomial $s_\lambda$ is expressed combinatorially by
all  SSYTs of shape $\lambda$ and entries $\le n$, in the late $80$'s, Lascoux and Sch\"utzenberger \cite{lasschutz,lascouxkeys} identified the SSYTs
 contributing to the key polynomial  corresponding to $\lambda$ and $w$ via a  condition in the Bruhat order
involving their right keys. That is, the key polynomial is decomposed into a sum of Demazure atoms \cite{masondemazure} (standard bases \cite{lascouxkeys})
 which is equivalent to a decomposition of the Demazure crystal ${\mathfrak{B}}_{w\lambda}$.
\subsection{Demazure characters and Demazure operators}
 The  Demazure character (or key polynomial) $\kappa_\alpha$ and the Demazure atom $\widehat\kappa_\alpha$, with $\alpha\in \mathbb{N}^n$ (a rearrangement of $\lambda$), are also generated recursively by the application of  Demazure operators  (or isobaric divided differences \cite{lascouxdraft}) $\pi_i$ and $\hat\pi_i:=\pi_i-1$, for  $1\le i<n$, respectively, to the monomial $x^\lambda$. Such operators are defined for each simple reflection  of $\mathfrak{S}_n$ and satisfy the braid relations of a Coxeter group. See Section~\ref{sec:isob} for  precise definitions, recursive rules and combinatorial descriptions.
   They were introduced by Demazure \cite{demazure}
   for all Weyl groups and  were  studied combinatorially, in the case of $\mathfrak{S}_n$,  by  Lascoux and Sch\"utzenberger \cite{lasschutz,lascouxkeys} who produce a
   crystal structure by providing a combinatorial version for Demazure operators  in terms of  crystal (or coplactic) operators \cite{thibon}.
\subsection{Combinatorics of nonsymmetric  Macdonald polynomials and RSK analogue}
 Non symmetric Macdonald  polynomials $E_\alpha(X,q,t)$, with $\alpha\in\mathbb{N}^n$ (we are assuming zero in $\mathbb{N}$),  form a basis of $\mathbb{C}(q,t)[x_1,\dots, x_n]$, and were introduced and studied 
 by Opdam \cite{opdam}, Cherednik \cite{cherednick}, and Macdonald \cite{mac}. Their representational-theoretical nature in connection with Demazure characters has been   investigated by Sanderson \cite{sanderson} and Ion \cite{ion}. In 2004, Haglund, Haiman and Loher gave a combinatorial formula for non symmetric Macdonald polynomials \cite{hagnon}. Specialising the Haglund--Haiman--Loehr
formula for the nonsymmetric Macdonald polynomial $E_\alpha(x;q^{-1};t^{-1})$, \cite[Corollary~3.6.4]{hagnon}, by letting  $q,t\rightarrow 0$, implies that
$E_\alpha(x;\infty;\infty)$ is 
combinatorially expressed by all SSAFs of shape $\alpha$. See Section~\ref{sec:ssaf} for details on SSAFs.
 These polynomials are also a decomposition of the Schur
polynomial $s_\lambda$, with $\lambda$ the decreasing rearrangement of $\alpha$.
 Semi-skyline augmented fillings are in bijection with SSYTs so that the content is the same and the right key  is given by the shape of the SSAF \cite{masonrsk}. The Demazure atom $~\widehat\kappa_{\alpha}$ and $E_{\alpha}(x;\infty;\infty)$
 are then equal \cite{hagnon,masondemazure}.
An interesting analogue of the RSK bijection was given by Mason  \cite{masonrsk},
where SSYTs are replaced by SSAFs which manifest the keys.
 \subsection{Our results}
We consider the following Ferrers diagram,  in the French convention,
$\lambda=(m^{n-m+1},$ $m-1,$$\dots,n-k+1)$, $~1\leq m\leq n$, $~1\leq k\leq n$, $~n+1\le
m+k$, shown in green colour  in  Figure~\ref{fig:trunc}.
\begin{figure}[here]
 \begin{center}
\begin{tikzpicture} [scale=0.3] \filldraw[color=green!25] (-1,0)
rectangle (3,5); \filldraw[color=green!25] (3,0)
rectangle (5,2);  \filldraw[color=green!25] (-1,0)
rectangle (3,5); \filldraw[color=green!25] (3,1) rectangle
(5,3);\filldraw[color=green!25] (3,3) rectangle (4,4);
\draw[line width=0.5pt](-1,0)-- (-1,5);
\draw[line width=0.5pt, red](-1,6)-- (-1,8);
\draw[line width=0.5pt, red](-1,5)-- (-1,6);
\draw[line width=0.5pt](0,0)-- (0,5);
\draw[line width=0.5pt, red](0,8)-- (0,7);
\draw[line width=0.5pt, red](1,7)-- (1,6);
\draw[line width=0.5pt](1,0)-- (1,5);
\draw[line width=0.5pt](2,0)-- (2,5);
\draw[line width=0.5pt](3,0)-- (3,5);
\draw[line width=0.5pt](4,0)-- (4,4);
\draw[line width=0.5pt](-1,0)-- (5,0);
\draw[line width=0.5pt, red](4,0)-- (7,0);
\draw[line width=0.5pt](-1,1)-- (5,1);
\draw[line width=0.5pt](4,0) rectangle  (5,3);
\draw[line width=0.5pt, red](7,1)-- (6,1);
\draw[line width=0.5pt](-1,2)-- (5,2);
\draw[line width=0.5pt, red](6,2)-- (5,2);
\draw[line width=0.5pt](-1,3)-- (4,3);
\draw[line width=0.5pt](-1,4)-- (4,4);
\draw[line width=0.5pt](-1,5)-- (3,5);
\draw[line width=0.5pt,red](1,6)-- (2,6);
\draw[line width=0.5pt,red](2,5)-- (2,6);
\draw[line width=0.5pt, red](7,0)-- (7,1);
\draw[line width=0.5pt, red](6,2)-- (6,1);
\draw[line width=0.5pt, red](5,3)-- (5,2);
\draw[line width=0.5pt, red](5,3)-- (4,3);
\draw[line width=0.5pt](4,2)rectangle (5,3);
\draw[line width=0.5pt, red](-1,8)-- (0,8);
\draw[line width=0.5pt, red](1,7)-- (0,7);
\draw[line width=0.5pt] (-4,0) -- (-4,8);
\draw[line width=0.5pt] (-4,0) -- (-3.8,0);
\draw[line width=0.5pt] (-4,8) -- (-3.8,8);
 \draw[line width=0.5pt] (-2,0) -- (-2,5);
  \draw[line width=0.5pt] (-2,0) -- (-1.8,0);
 \draw[line width=0.5pt] (-2,5) -- (-1.8,5);
\node at (-2.7,2.5){$k$}; \node at (-4.7,4){$n$};
 \draw[line width=0.5pt] (-1,-1) -- (5,-1);
 \draw[line width=0.5pt] (-1,-1) -- (-1,-0.8);
\draw[line width=0.5pt] (5,-1) -- (5,-0.8);\node at (2.5,-1.7){$m$};
\end{tikzpicture}
\end{center}
\caption{The truncated Ferrers shape $\lambda$, in green, fitting the $k$ by $ m$ rectangle so that the  staircase of size $n$ is the smallest containing $\lambda$. If $k\le m$,  $(k,k-1,\ldots,1)$ is the biggest staircase inside of $\lambda$.}
\label{fig:trunc}
\end{figure}
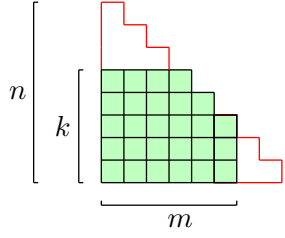
Theorem~\ref{maint}, in Section~\ref{sec:main}, exhibits an RSK-type bijection between multisets of cells of $\lambda$ and pairs of SSAFs where the image is described by a Bruhat  inequality  between the keys of the recording  and the insertion fillings.
  When $m+k=n+1$ then $\lambda$ is a rectangle and it reduces to the ordinary RSK correspondence in the sense that the inequality on the right keys is relaxed. This bijection is used in Section~\ref{sec:kernels}, Theorem~\ref{minmin}, to give an expansion of the non-symmetric Cauchy kernel
 $\prod_{\begin{smallmatrix}(i,j)\in \lambda\end{smallmatrix}}(1-x_iy_j)^{-1}$ in the basis of Demazure characters, and the basis of Demazure atoms.
 The kernel expands
   \begin{eqnarray}\label{0k<m}
\prod_{\begin{smallmatrix}(i,j)\in
\lambda\\
k\le m\end{smallmatrix}}(1-x_iy_j)^{-1}
=
\sum_{\begin{smallmatrix}\mu \in
\mathbb{N}^k
\end{smallmatrix}}\widehat{\kappa}_{\mu}(x)\kappa_{(0^{m-k},\alpha)}(y),
\end{eqnarray}
with  $\alpha=(\alpha_1,\ldots,\alpha_k)\in \mathbb{N}^k$  such that, for each $ i=k,\ldots, 1$, the entry $\alpha_i$ is the maximum element among the last  $\min\{i,n-m+1\}$ entries of $\mu$ in reverse order,  after deleting $\alpha_j$, for $i<j\le k$.
If $m<k$, the formula is symmetrical,   swapping in \eqref{0k<m} $x$ with $y$, and $k$ with $m$.
If $\lambda $ is a rectangle, $\alpha$ is a partition and the classical Cauchy identity  in the basis of Schur polynomials is recovered; and if $\lambda$ is the staircase of length $n$,  $\alpha$ is the reverse of $\mu$, and Lascoux's expansion in Theorem~6 of \cite{lascouxcrystal}, and in \cite{fulascoux}, 
is also recovered.
 For truncated staircases, the expansion \eqref{0k<m} implies Lascoux's formula in Theorem~7 of \cite{lascouxcrystal}, and  makes explicit the SSYTs of the Demazure crystal.



Our paper is organised in six sections. In Section~\ref{sec:weak},  we first recall the definitions of compositions, Young tableaux, and key tableaux,  then  the Bruhat orders of the symmetric group $\mathfrak{S}_n$, their several characterizations,
  and their conversions  to a 
  $\mathfrak{S}_n$-orbit.
 In Section~\ref{sec:ssaf}, we  review the
necessary terminology and theory of SSAFs, in particular, 
the
RSK analogue for SSAFs along with    useful  properties 
for the next section. 
 Then, in Section~\ref{sec:main}, we give our
main result, Theorem~\ref{maint}, and  an illustration of the bijection described  in this theorem. 
 Section~\ref{sec:iso} is devoted to the necessary theory of crystal graphs in type A in connection with the combinatorial descriptions of Demazure operators and  the two families of key polynomials  to be used  in the last section, in particular, in Lemma~\ref{keyintersection}. Finally, in the last section, we apply the bijection provided in Theorem~\ref{maint}
to obtain expansions of Cauchy kernels over  truncated stair cases as described in  Theorem~\ref{minmin}.
\section{Weak compositions, key tableaux and  Bruhat orders on $\mathfrak{S}_n$ and orbits} \label{sec:weak}
\subsection{Young tableaux and key tableaux} Let $\mathbb{N}$ denote the set of non-negative integers. Fix a
 positive integer $n$, and define  $[n]$ as  the set $\{1,\dots
,n\}.$ A  weak composition $\gamma=(\gamma_1,\dots,\gamma_n)$ is a
vector in $\mathbb{N}^n.$ If $\gamma_i=\cdots=\gamma_{i+k-1}$, for some $k\ge 1$, then
we also write $\gamma=(\gamma_1,\dots,\gamma_{i-1},$
$\gamma_i^k,\gamma_{i+k},$ $\ldots,\gamma_n)$. We often concatenate weak compositions $\alpha\in \mathbb{N}^r$ and $\beta\in\mathbb{N}^s$, with $r+s=n$, to form the weak composition $(\alpha,\beta)=$ $(\alpha_1,\ldots,\alpha_r,$ $\beta_1,\ldots,\beta_s)\in\mathbb{N}^n$.
A weak composition $\gamma$ whose entries are in weakly
decreasing order, that is, $\gamma_1 \geq \cdots \geq \gamma_n,$ is said to be a
 partition.
Every weak composition $\gamma$ determines a unique partition $\gamma^+$
obtained by arranging the entries of $\gamma$ in weakly decreasing
order. A partition $\lambda=(\lambda_1,\dots, \lambda_n)$ is
identified  with its Young diagram (or Ferrers shape) $dg(\lambda)$ in French
convention, an array of left-justified cells with $\lambda_i$ cells
in row $i$ from the bottom, for $1 \leq i \leq n.$  The cells are
located in the diagram  $dg(\lambda)$ by their row and column
indices
 $(i,j)$, where $1 \leq i \leq n$ and $1 \leq j \leq \lambda_i.$
The number $\ell(\lambda)$ of rows in the Young diagram $dg(\lambda)$ with a positive number of cells is said to be the  length of the partition $\lambda$.  For instance, for $n=4$, if $\lambda=(4,2,2,0)$, 
 $\ell(\lambda)=3$, and the Young diagram of  $\lambda=(4,2,2,0)$, is
 \begin{center}
 \begin{tikzpicture}[scale=0.6]
 \draw [line width=1pt] (0.5,1)rectangle(1,1.5);
\draw [line width=1pt] (0,0)rectangle(0.5,0.5);\draw [line width=1pt]
(0.5,0)rectangle(1,0.5);
\draw [line width=1pt] (1,0)rectangle(1.5,0.5);\draw [line width=1pt]
(1.5,0)rectangle(2,0.5);
\draw [line width=1pt] (0,0.5)rectangle(0.5,1);
\draw [line width=1pt] (0.5,0.5)rectangle(1,1);
\draw [line width=1pt] (0,1)rectangle(0.5,1.5);
\end{tikzpicture}
\end{center}
 A filling of shape $ \lambda$ (or a filling of $dg(\lambda)$), in the alphabet $[n]$, is a map $T : dg(\lambda)\rightarrow
 [n].$ A  semi-standard Young tableau (SSYT) $T$ of shape $sh(T)=\lambda$, in the alphabet $[n]$,  is a
filling of $dg(\lambda)$ which is weakly increasing in each row from
left to right and strictly increasing up in each column. Let SSYT$_n$ denote the set of all semi-standard Young tableaux with entries $\le n$. The   column word of $ T\in$SSYT$_n$  is the word, over the alphabet $[n]$, which consists
of the entries of each column, read top to bottom and  left
to right. The { content}
or { weight} of  $ T\in$SSYT$_n$ is the content or weight of its column word in the alphabet $[n]$, which is the weak composition
$c(T)=(\alpha_1,\dots,\alpha_n)$
 such that $\alpha_i$ is the multiplicity of $i$ in the column word of $T$. For instance, a SSYT
  of shape  $\lambda=(4,2,2,0)$,  in the alphabet $[4]$, with $col(T)=32143123$ and content $c(T)=(2,2,3,1)$   is
 \begin{center}
 \begin{tikzpicture}[scale=0.6]
 \draw [line width=1pt] (0.5,1)rectangle(1,1.5);\node at (0.75,1.25){\tiny4};
\draw [line width=1pt] (0,0)rectangle(0.5,0.5);\draw [line width=1pt]
(0.5,0)rectangle(1,0.5);
\draw [line width=1pt] (1,0)rectangle(1.5,0.5);\draw [line width=1pt]
(1.5,0)rectangle(2,0.5);
\draw [line width=1pt] (0,0.5)rectangle(0.5,1);
\draw [line width=1pt] (0.5,0.5)rectangle(1,1);
\draw [line width=1pt] (0,1)rectangle(0.5,1.5);
\node at
(0.25,0.25){\tiny1};\node at (0.75,0.25){\tiny1};\node at (1.25,0.25){\tiny2};\node at
(1.75,0.25){\tiny3};\node at (0.25,0.75){\tiny2};\node at (0.75,0.75){\tiny3};\node at
(0.25,1.25){\tiny3};\node at (-0.7,0.75){$T=$};
\end{tikzpicture}
\end{center}
A {key tableau} is a semi-standard Young tableau such that the set of entries
in the $(j + 1)^{th}$ column is a subset of the set of entries in
the $j^{th}$ column, for all $ j$. There is a bijection  \cite{reiner}
 between weak compositions in $\mathbb{N}^n$ and keys in the alphabet $[n]$ given by $\gamma\rightarrow key(\gamma),$ where
  $key(\gamma)$ is the key such
that for all $j$, the first $\gamma_j$ columns contain the letter
$j$. Any key tableau is of the form $key(\gamma)$ with $\gamma$ its
content and $\gamma^+$ the shape. (See Example~\ref{example1}.) When $\gamma=\gamma^+$ one obtains the key of shape and content $\gamma$, called  Yamanouchi tableau of shape $\gamma$. Note that $evac(key(\gamma_1,\ldots,\gamma_n))=key(\gamma_n,\ldots,\gamma_1)$ where  $"evac"$ denotes the Sch\"utzenberger's evacuation on SSYTs \cite{schQ,fulton,stanley}.

\subsection{Bruhat orders on $\mathfrak{S}_n$ and orbits}
\label{sec:Bruhat}
The symmetric group  $\mathfrak{S}_n$ is generated by the simple transpositions $s_i$ which  exchanges $i$ with $i+1$, $1\le i<n$, and they satisfy the Coxeter relations \begin{equation}\label{cox} s_i^2=1, \quad s_is_j=s_js_i, \;\;\text{for}\;\; |i-j|>1, \quad s_is_{i+1}s_i=s_{i+1}s_is_{i+1}.\end{equation}
 Given $\sigma\in \mathfrak{S}_n$, if $\sigma=s_{i_N}\cdots s_{i_1}$ is a decomposition of $\sigma$ into simple transpositions, where the number $N$  is minimised, we say that we have a  reduced decomposition of $\sigma$, and $N$ is called  its length $\ell(\sigma)$. In this case, we say that the sequence of indices $(i_N,\ldots , i_1)$ is a  reduced word for $\sigma$. The unique element
of maximal length in $\mathfrak{S}_n$  is denoted by $\omega$. It is a well known fact that any two reduced decompositions for $\sigma$ are connected by a sequence of the  last two Coxeter relations \eqref{cox}, called commutation and braid relations, respectively.  Recall that a pair $(i,j)$, with $i<j$, is said to be an inversion of $\sigma$ if $\sigma(i)>\sigma(j)$. The number of inversions of $\sigma$ is the same as $\ell(\sigma)$ \cite{manivel}.

 The (strong) Bruhat order in $\mathfrak{S}_n$ is a partial order in $\mathfrak{S}_n$ compatible with the length of a permutation. For any  $\theta$
 in $\mathfrak{S}_n$ and $t$ a transposition, we write \begin{equation}\label{closure}\theta<t\theta\;\mbox{ if and only if  $\ell(\theta)<\ell(t\theta)$}.\end{equation}
   The transitive closure of these relations is said to be the (strong) Bruhat order in $\mathfrak{S}_n$.
 Regarding $\theta$ as the linear array $\theta_1\theta_2\dots\theta_n$ with $\theta(i)=\theta_i$, the Bruhat order says that $\theta<t\theta$ with $t$ a transposition if and only if $t$ exchanges $\theta_i$ and $\theta_j$ with $\theta_i<\theta_j$ for some $i<j$. It can be shown that if $t\theta =\sigma$, there is also a transposition $t^{\prime}$ such that $\theta t^{\prime}=\sigma$ \cite{bourbaki}.
 We recall
the subword property of the  (strong)  Bruhat order in a Coxeter group.
\begin{prop}{\em \cite{ bourbaki}} Let $\theta$, $\sigma$ in $\mathfrak{S}_n$ and $(i_N,\ldots,  i_1)$ a reduced word for $\sigma$, then $\theta\le \sigma$ if and only if there exists a subsequence  of $(i_N,\ldots,  i_1)$ which is a reduced word for $\theta$.
\end{prop}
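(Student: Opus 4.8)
The plan is to prove both implications by induction on $N=\ell(\sigma)$, resting on two standard structural facts about the Bruhat order that, in $\mathfrak{S}_n$, can be read off from the inversion description recalled above. The first is the \emph{strong exchange property}: if $\sigma=s_{i_N}\cdots s_{i_1}$ is reduced and $t$ is a transposition with $\ell(t\sigma)<\ell(\sigma)$, then $t\sigma=s_{i_N}\cdots\widehat{s_{i_k}}\cdots s_{i_1}$, a reduced word, for exactly one index $k$. The second is the \emph{lifting property}: if $\theta\le\sigma$ and $s_i\sigma>\sigma$, then $s_i\theta\le s_i\sigma$. Both follow from the covering relation \eqref{closure} and the bookkeeping of inversions, and I would isolate them as two preliminary lemmas (or quote them verbatim from \cite{bourbaki}).

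For the forward implication, assume $\theta\le\sigma$ with $\theta\neq\sigma$. Since the Bruhat order is the transitive closure of the relations \eqref{closure}, there is a chain $\theta=\mu_0<\cdots<\mu_r=\sigma$ in which each step is left multiplication by a transposition strictly increasing the length; the final step yields a transposition $t$ with $\sigma=t\mu_{r-1}$, whence $\mu_{r-1}=t\sigma$, $\ell(t\sigma)<\ell(\sigma)$, and $\theta\le t\sigma<\sigma$. Applying the strong exchange property to this $t$ and the fixed reduced word $(i_N,\ldots,i_1)$ of $\sigma$ produces a reduced word for $t\sigma$, obtained by deleting a single index $i_k$. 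As $\ell(t\sigma)=N-1<N$ and $\theta\le t\sigma$, the induction hypothesis supplies a subword of this shorter word that is reduced for $\theta$, and such a subword is \emph{a fortiori} a subword of $(i_N,\ldots,i_1)$.

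For the reverse implication, let $(i_{j_M},\ldots,i_{j_1})$ be a subword of $(i_N,\ldots,i_1)$ that is reduced for $\theta$. If it uses every index then $\theta=\sigma$; otherwise I examine the leftmost letter $s_{i_N}$ and set $\sigma'=s_{i_{N-1}}\cdots s_{i_1}$, a reduced word of length $N-1$ with $\sigma=s_{i_N}\sigma'>\sigma'$. If $i_N$ is \emph{not} used, the subword lies inside $(i_{N-1},\ldots,i_1)$, so induction gives $\theta\le\sigma'$, and $\sigma'<\sigma$ finishes the case by transitivity. If $i_N$ \emph{is} used, then $\theta=s_{i_N}\theta'$ where $\theta'=s_{i_{j_{M-1}}}\cdots s_{i_{j_1}}$ is reduced (a shorter word would force $\ell(\theta)<M$) and is a subword of $(i_{N-1},\ldots,i_1)$; by induction $\theta'\le\sigma'$, and since $s_{i_N}\sigma'=\sigma>\sigma'$ the lifting property yields $\theta=s_{i_N}\theta'\le s_{i_N}\sigma'=\sigma$.

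The genuine content, and the step I expect to be the main obstacle, is establishing the strong exchange and lifting properties themselves; the two inductions above are then only bookkeeping over these inputs. In $\mathfrak{S}_n$ I would derive both concretely from \eqref{closure}: reducedness of $(i_N,\ldots,i_1)$ means each simple transposition creates a new inversion, deleting the unique letter whose associated reflection equals $t$ realises $t\sigma$ without cancellation, and the lifting property reduces to a transparent comparison of the inversion sets of $\theta,\sigma,s_i\theta,s_i\sigma$. Alternatively, invoking these facts directly from \cite{bourbaki} shortens the argument to the two inductions.
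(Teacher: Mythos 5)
The paper offers no proof of this proposition at all --- it is quoted from Bourbaki --- so there is nothing internal to compare against; what matters is whether your argument stands on its own. Your route (two inductions on $\ell(\sigma)$, powered by the strong exchange property for the forward direction and the lifting property for the reverse) is the standard textbook proof, and the reverse implication is complete as written: the case analysis on whether $i_N$ is used is correct, the observation that $(i_{j_{M-1}},\ldots,i_{j_1})$ must be reduced for $\theta'$ is justified by the length count, and the version of lifting you invoke ($\theta\le\sigma$ and $s_i\sigma>\sigma$ imply $s_i\theta\le s_i\sigma$) is a correct consequence of the usual lifting property.

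There is, however, a genuine gap in the forward direction. The strong exchange property does \emph{not} assert that $s_{i_N}\cdots\widehat{s_{i_k}}\cdots s_{i_1}$ is a reduced word for $t\sigma$, only that it is \emph{some} expression for it; likewise $\ell(t\sigma)=N-1$ is unjustified, since the chain produced by the transitive closure of \eqref{closure} need not be saturated and a single step $\mu_{r-1}<t\mu_{r-1}=\sigma$ can drop the length by more than one. For instance, in $\mathfrak{S}_3$ with $\sigma=s_1s_2s_1$ and $t$ the transposition $(1\,3)$ one has $t\sigma=e$, and the exchange deletes the middle letter to give the non-reduced word $s_1s_1$. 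As written, your inductive hypothesis (which requires a \emph{reduced} word for $t\sigma$) therefore cannot be applied. The repair is short but must be said: the deleted expression is a word of length $N-1$ for $t\sigma$, and by the deletion condition it contains a reduced subword for $t\sigma$; since a subword of a subword of $(i_N,\ldots,i_1)$ is again a subword of $(i_N,\ldots,i_1)$, the induction (on $\ell(\sigma)$, using only $\ell(t\sigma)<N$) goes through after this extraction. Alternatively one may first establish that Bruhat intervals admit saturated chains, but that is itself a theorem usually proved alongside the subword property, so the deletion-condition fix is the cleaner patch.
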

The maximal length element $\omega$ is the maximal element of the Bruhat order, $\sigma\le \omega$, for any $\sigma\in \mathfrak{S}_n$, and it satisfies $\omega^2=1$. Besides, its left and right translations $\sigma\rightarrow\omega\sigma$ and $\sigma\rightarrow\sigma\omega$ are anti automorphisms for the Bruhat order.

 Let $\lambda=(\lambda_1,\dots,\lambda_n)$ be a partition, and $\mathfrak{S}_n\lambda$ the  $\mathfrak{S}_n$-orbit  of $\lambda$. The  $\mathfrak{S}_n$-stabiliser of $\lambda$, $stab_{\lambda}:=\{\sigma \in \mathfrak{S}_n :
\sigma\lambda=\lambda\}$,  is the  parabolic subgroup
generated by $\{s_i,$$ 1\le i<n: s_i\lambda=\lambda\}$.
 For each coset $w stab_\lambda$ in $\mathfrak{S}_n/stab_{\lambda}$ we may always choose $w$ to be the shortest permutation in $w stab_\lambda$ \cite{bourbaki}.
This allows $\mathfrak{S}_n/stab_{\lambda}$,
 with cardinality  $n!/|stab_\lambda|$,  to be identified  with $\mathfrak{S}_n\lambda$ \cite{Brenti, bourbaki, reflection}.
 The  Bruhat order 
 on $\mathfrak{S}_n/stab_{\lambda}$ is the restriction to the minimal coset representatives, \cite{bourbaki,Stembridge0,Stembridge}, and can  be converted to an ordering of $\mathfrak{S}_n\lambda$ by taking the transitive closure
 of the relations
 \begin{equation}\label{inducedbruhat}\gamma< t\gamma,\;\mbox{if $\gamma_i>\gamma_{j}$, $i<j$, and $t$ the transposition $(i\,j)$ ($\gamma=(\gamma_1,\ldots,\gamma_n)\in
 \mathfrak{S}_n\lambda$).}
 \end{equation}
We call this ordering the Bruhat order on $\mathfrak{S}_n\lambda$.
 If we replace, in \eqref{closure}, $t$ with the simple transposition $s_i$,
 the transitive closure of a such relations defines  the left weak Bruhat order on $\mathfrak{S}_n$. Its restriction   to
   $\mathfrak{S}_n/stab_\lambda$
 is then converted to an ordering in  $\mathfrak{S}_n\lambda$ by replacing, in \eqref{inducedbruhat}, $t$ with $s_i$. This conversion of the left weak Bruhat order  on $\mathfrak{S}_n$
  to $\mathfrak{S}_n\lambda$ is interestingly described by elementary  bubble sorting operators \cite{hivert}.
The  elementary  bubble sorting operation $\pi_i$, $1\le i<n$,
  on words $\gamma_1\gamma_2\cdots\gamma_n$ of length n (or weak compositions in $\mathbb{N}^n$),  sorts the letters in positions
$i$ and $i + 1$ in weakly increasing order, that is,
 it swaps $\gamma_i$ and $\gamma_{i+1}$ if $\gamma_i > \gamma_{i+1}$, or fixes $\gamma_1\gamma_2\cdots\gamma_n$  otherwise.
    Define now the partial order on $\mathfrak{S}_n\lambda$ by taking the transitive closure of the relations $\gamma<\pi_i \gamma$ when $\gamma_i > \gamma_{i+1}$, ($\gamma\in  \mathfrak{S}_n\lambda$ and $1\le i<n$).
It can also be proved that the elementary  bubble sorting operations $\pi_i$, $1\le i<n$, satisfy the relations \begin{equation}\label{bubblerelation}\pi_i^2=\pi_i, \; \pi_i\pi_{i+1}\pi_i=\pi_{i+1}\pi_i\pi_{i+1},\;\mbox{ and }\;\pi_i\pi_j=\pi_j\pi_i, \;\mbox{for}\;|i-j|>1.\end{equation}

The  set of minimal length coset
representatives of  $\mathfrak{S}_n/stab_{\lambda}$ may be described as $\{\sigma$ $\in$ $\mathfrak{S}_n:$ $\ell(\sigma s_i)>$ $\ell(\sigma),$ $s_i\in stab_\lambda\}$ \cite{Brenti, bourbaki, reflection}.
We now recall  a construction of the minimal length coset
representatives for  $\mathfrak{S}_n/stab_{\lambda}$, due to Lascoux, in \cite{lascouxcrystal}, where the notion of key tableau is used. This allows  to convert the tableau criterion for the Bruhat order in $\mathfrak{S}_n$ to a tableau criterion for the
Bruhat order \eqref{inducedbruhat}  in $\mathfrak{S}_n\lambda$.
Recall that the bijection between staircase keys of shape
$(n,n-1,\dots,1)$ and permutations in  $\mathfrak{S}_n$  gives the well-known
tableau criterion for the Bruhat order in
$\mathfrak{S}_n$ \cite{Ehresmann,manivel}.
\begin{prop}\label{bruh} {\em \cite{manivel}}
Let $\sigma,$ $\beta$ $\in \mathfrak{S}_n,$ we have $\sigma \le
\beta$ if and only if
 $key(\sigma(n,\dots,$ $1)) \leq
key(\beta(n,\dots,1))$ for the entrywise comparison. \end{prop}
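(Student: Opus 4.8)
The plan is to reduce the statement to the classical tableau criterion for the Bruhat order (Ehresmann, Manivel), by computing the columns of the staircase key explicitly. Recall that this criterion asserts that, for $\sigma,\beta\in\mathfrak{S}_n$, one has $\sigma\le\beta$ if and only if for every $1\le p\le n$ the increasing rearrangement $\sigma_{p,1}<\cdots<\sigma_{p,p}$ of $(\sigma(1),\dots,\sigma(p))$ is dominated entrywise by the increasing rearrangement $\beta_{p,1}<\cdots<\beta_{p,p}$ of $(\beta(1),\dots,\beta(p))$; that is, $\sigma_{p,q}\le\beta_{p,q}$ for all $1\le q\le p\le n$. Since this is exactly what is cited to \cite{Ehresmann,manivel}, I would take it as given and only establish the dictionary between keys and prefix sets.

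First I would identify the columns of $key(\sigma(n,\dots,1))$. Writing $\gamma=\sigma(n,\dots,1)$, the action on positions gives $\gamma_j=n+1-\sigma^{-1}(j)$, so $\gamma$ is a rearrangement of $(n,\dots,1)$ and $key(\gamma)$ has the staircase shape $(n,n-1,\dots,1)$ for every $\sigma$; in particular $key(\sigma(n,\dots,1))$ and $key(\beta(n,\dots,1))$ share the same shape, so the entrywise comparison is meaningful cell by cell. By the defining property of $key(\gamma)$, the $c$-th column consists of the letters $j$ with $\gamma_j\ge c$, listed increasingly from bottom to top. Since $\gamma_j\ge c$ is equivalent to $\sigma^{-1}(j)\le n+1-c$, the content of the $c$-th column is exactly $\{\sigma(1),\dots,\sigma(n+1-c)\}$. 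Hence, setting $p=n+1-c$, the entry in row $r$ (counted from the bottom) and column $c$ of $key(\sigma(n,\dots,1))$ is the $r$-th smallest element of $\{\sigma(1),\dots,\sigma(p)\}$, namely $\sigma_{p,r}$.

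With this description the equivalence is immediate: as $c$ ranges over $1,\dots,n$ and $r$ over $1,\dots,p$ (the length of the $c$-th column), the pairs $(p,r)=(n+1-c,r)$ sweep out all $1\le r\le p\le n$, and the cell-by-cell inequality $key(\sigma(n,\dots,1))\le key(\beta(n,\dots,1))$ reads precisely $\sigma_{p,r}\le\beta_{p,r}$ for all such $(p,r)$. This is verbatim the tableau criterion, so I would conclude that $key(\sigma(n,\dots,1))\le key(\beta(n,\dots,1))$ entrywise if and only if $\sigma\le\beta$. The only delicate point is bookkeeping the two order reversals — the staircase trades column length against the prefix index $p$, and the values $\gamma_j=n+1-\sigma^{-1}(j)$ reverse $\sigma^{-1}$ — which must be tracked consistently so that the increasing order inside each column matches the increasing rearrangement used in the criterion; once the $c$-th column content is correctly pinned to $\{\sigma(1),\dots,\sigma(n+1-c)\}$, nothing remains beyond invoking the classical criterion.
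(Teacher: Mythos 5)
Your argument is correct and is exactly what the paper intends: the proposition is stated without proof as a restatement of the classical Ehresmann--Manivel tableau criterion, and your explicit identification of the $c$-th column of $key(\sigma(n,\dots,1))$ with the set $\{\sigma(1),\dots,\sigma(n+1-c)\}$ (via $\gamma_j=n+1-\sigma^{-1}(j)$, which matches the paper's convention $(w\lambda)_j=\lambda_{w^{-1}(j)}$, as one can check against Example~1) is precisely the dictionary that makes the two statements coincide cell by cell. Nothing is missing.
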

In \cite{lascouxcrystal}, Lascoux  constructs  the shortest permutation $w$ in the coset $w stab_\lambda$ such that $w\lambda=\gamma\in \mathbb{N}^n$ using the key tableau of $\gamma$    as follows: firstly,
add the complete column $[n\dots 1]$ as the left most column of
$key(\gamma),$ if $\gamma$ has an entry equal to zero;  secondly, write the elements of the right most column of
$key(\gamma)$ in increasing order then  the new elements that
appear in the column next to the last in increasing order and so on until the first column. The resulting word is the desired permutation $w$ in $\mathfrak{S}_n$.
\begin{ex}\label{example1}
Let $\gamma=(1,3,0,0,1)$ and its $\mathfrak{S}_5$-stabiliser $stab_{(3,1,1,0,0)}=<s_2,s_4>$, the parabolic subgroup generated by   the simple transpositions of  $\mathfrak{S}_5$ that leave  $\gamma$ invariant. Let

\begin{tikzpicture}[scale=0.7]
\draw [line width=1pt] (0,0)rectangle(0.5,0.5);\draw [line width=1pt]
(0.5,0)rectangle(1,0.5);
\draw [line width=1pt] (1,0)rectangle(1.5,0.5);\draw [line width=1pt]
(0,0.5)rectangle(0.5,1);
\draw [line width=1pt] (0,1)rectangle(0.5,1.5);

\node at
(0.25,0.25){\small1};\node at (0.75,0.25){\small2};\node at (1.25,0.25){\small2};\node at (0.25,0.75){\small2};\node at
(0.25,1.25){\small5};\node at (-1.4,0.0){\small$key(\gamma)=$};
\end{tikzpicture}.
First
add the complete column $[5,4,3,2,1],$ to get
\begin{tikzpicture}[scale=0.7]
\draw [line width=1pt] (0,0)rectangle(0.5,0.5);\draw [line width=1pt]
(0.5,0)rectangle(1,0.5);
\draw [line width=1pt] (1,0)rectangle(1.5,0.5);\draw [line width=1pt]
(1.5,0)rectangle(2,0.5);
\draw [line width=1pt] (0,0.5)rectangle(0.5,1);
\draw [line width=1pt] (0.5,0.5)rectangle(1,1);
\draw [line width=1pt] (0,1)rectangle(0.5,1.5);
\draw [line width=1pt] (0.5,1)rectangle(1,1.5);
\draw [line width=1pt] (0,1.5)rectangle(0.5,2);
\draw [line width=1pt] (0,2)rectangle(0.5,2.5);
\node at
(0.25,0.25){\small1};\node at (0.75,0.25){\small1};\node at (1.25,0.25){\small2};\node at
(1.75,0.25){\small2};\node at (0.25,0.75){\small2};\node at (0.75,0.75){\small2};
\node at (0.25,1.25){\small3};\node at (0.75,1.25){\small5};
\node at (0.25,1.75){\small4};\node at (0.25,2.25){\small5};
\end{tikzpicture}
Hence, $w=21534=s_1s_4s_3$ is the shortest permutation in the coset $w\,stab_{(3,1,1,0,0)}$.
\end{ex}
\begin{thm}\label{vBru}
Let $\alpha_1$ and $\alpha_2$ be in the $\mathfrak{S}_n\lambda$.
 Then

 $(a)$ $\alpha_1\leq \alpha_2$
 if and only if $key(\alpha_1)\leq key(\alpha_2).$

 $(b)$ $\alpha_1\leq \alpha_2$ if and only if $evac( key(\alpha_2))\leq evac( key(\alpha_1))$.
\end{thm}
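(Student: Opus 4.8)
The plan is to prove part (a) by transporting the tableau criterion of Proposition~\ref{bruh} from $\mathfrak{S}_n$ to the orbit $\mathfrak{S}_n\lambda$ via Lascoux's construction of minimal coset representatives, and then to deduce (b) from (a) by reversal. First I would use the identification of $\mathfrak{S}_n\lambda$ with the minimal length representatives of $\mathfrak{S}_n/stab_\lambda$: writing $\alpha_i=w_i\lambda$ with $w_i$ the shortest permutation in its coset (as recalled before Example~\ref{example1}), the order \eqref{inducedbruhat} on the orbit is the restriction of the Bruhat order of $\mathfrak{S}_n$, so $\alpha_1\le\alpha_2$ iff $w_1\le w_2$. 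By Proposition~\ref{bruh} this is equivalent to $key(w_1(n,\dots,1))\le key(w_2(n,\dots,1))$ entrywise, a comparison of two \emph{staircase} keys of shape $(n,n-1,\dots,1)$, so it remains to match this staircase criterion with the comparison $key(\alpha_1)\le key(\alpha_2)$ of keys of shape $\lambda$.

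The bridge is a column computation. For $\gamma=w\lambda$ the column of $key(\gamma)$ of length $\ell$ is the set $\{l:\gamma_l\ge c\}$, and since $\lambda$ is a partition one checks directly that this set equals $\{w(1),w(2),\dots,w(\ell)\}$, where $\ell=\#\{i:\lambda_i\ge c\}$. The same computation applied to $(n,\dots,1)$ shows that the length-$\ell$ column of $key(w(n,\dots,1))$ is again $\{w(1),\dots,w(\ell)\}$. Hence $key(w\lambda)$ is obtained from the staircase key $key(w(n,\dots,1))$ by deleting the columns whose length is not a column length of $\lambda$; equivalently, Lascoux's construction recovers $w$, and thus the whole staircase key, from $key(w\lambda)$ by prepending the full column and reading columns in increasing order, exactly as in Example~\ref{example1}. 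Consequently $key(\alpha_1)\le key(\alpha_2)$ is precisely the entrywise comparison of the two staircase keys restricted to the columns whose lengths occur in $\lambda$, and the forward implication of (a) is immediate, since restricting a valid entrywise comparison to a subset of columns preserves it.

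The delicate point, which I expect to be the crux, is the converse: that comparison on the columns of $\lambda$ forces comparison on all columns of the staircase. Here I would use minimality of $w_1,w_2$, which forces each $w_i$ to be increasing on every maximal block of equal parts of $\lambda$, so that the missing column lengths lie in the interiors of such blocks. For a length $\ell$ interior to a block $(a,b]$, the column $\{w_i(1),\dots,w_i(\ell)\}$ is the fixed prefix $\{w_i(1),\dots,w_i(a)\}$ together with the $\ell-a$ smallest block values. Encoding the entrywise order by the tail-counts $N_i(\ell,t)=\#\{j\le\ell:w_i(j)>t\}$, the increasing block structure gives $N_i(\ell,t)=N_i(a,t)+\max\{0,(\ell-a)-c_i(t)\}$, where $c_i(t)$ counts the block values of $w_i$ that are $\le t$; a short case analysis on the signs of the two maxima shows that the boundary inequalities $N_1(a,t)\le N_2(a,t)$ and $N_1(b,t)\le N_2(b,t)$ (the comparisons at the block endpoints $a,b$, which are column lengths of $\lambda$) force $N_1(\ell,t)\le N_2(\ell,t)$ for every interior $\ell$ and every $t$. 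This yields $key(w_1(n,\dots,1))\le key(w_2(n,\dots,1))$, hence $w_1\le w_2$ and $\alpha_1\le\alpha_2$, completing (a).

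Finally, for part (b) I would exploit that left multiplication by the longest element $\omega$ is an anti-automorphism of the Bruhat order, which on the orbit is the reversal $\alpha\mapsto\omega\alpha=(\alpha_n,\dots,\alpha_1)$; checking \eqref{inducedbruhat} on covers confirms $\alpha_1\le\alpha_2$ iff $\omega\alpha_2\le\omega\alpha_1$. Since $evac(key(\alpha))=key(\alpha_n,\dots,\alpha_1)=key(\omega\alpha)$, applying part (a) to the pair $\omega\alpha_1,\omega\alpha_2\in\mathfrak{S}_n\lambda$ gives $\omega\alpha_2\le\omega\alpha_1$ iff $key(\omega\alpha_2)\le key(\omega\alpha_1)$, that is, iff $evac(key(\alpha_2))\le evac(key(\alpha_1))$. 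Chaining the two equivalences proves (b).
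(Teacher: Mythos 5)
Your proof follows the same route as the paper's: both pass to the minimal length coset representatives, invoke the tableau criterion of Proposition~\ref{bruh}, identify $key(w\lambda)$ with the sub-tableau of the staircase key $key(w(n,\dots,1))$ consisting of the columns whose lengths occur in $\lambda$, and obtain part $(b)$ from $evac(key(\alpha))=key(\omega\alpha)$ together with the order-reversal induced by $\omega$. The only difference is one of detail: the paper asserts the equivalence of the two key comparisons in a single line via Lascoux's construction, whereas you supply a correct verification of the nontrivial direction, namely that entrywise comparability on the columns of $\lambda$ already forces comparability on all staircase columns, using the increasing behaviour of minimal representatives on blocks of equal parts.
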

\begin{proof} $(a)$
Let $\sigma_1$ and $\sigma_2$ be the shortest length representatives of $\mathfrak{S}_n/stab_\lambda$ such that
$\sigma_1\lambda=\alpha_1$, $\sigma_2\lambda=\alpha_2$.
 Then,  $\alpha_1\leq
\alpha_2$ if and only if $\sigma_1\leq \sigma_2$ in Bruhat order, and, by Proposition~\ref{bruh}, this means
 $key(\sigma_1(n,\dots,1))\leq key(\sigma_2(n,\dots,1)).$  Using the constructions of $\sigma_1$ and $\sigma_2$ explained above this is equivalent to say that  $key(\alpha_1)\leq
key(\alpha_2).$

$(b)$ Recall that  $evac( key(\alpha))= key(\omega\alpha)$.\end{proof}
\section {Semi-skyline augmented fillings}
\label{sec:ssaf}
\subsection{Definitions and properties}
\label{sec:ssaf1}
We follow closely the conventions and terminology in
\cite{hagmac,hagnon} and \cite{masondemazure,masonrsk}. A weak composition $\gamma = (\gamma _1,
\dots , \gamma_n)$ is visualised as a diagram consisting of $n$
columns, with $\gamma_j$ boxes in column $j$, for $1\le j \le n$. Formally, the $column
~diagram$ of $\gamma$ is the set $ dg^{\prime}(\gamma) = \{(i, j)\in
\mathbb{N}^2 : 1 \leq j \leq n, 1 \leq i \leq \gamma_j\}$ where the
coordinates are in French convention, $i$ indicates the vertical coordinate, indexing the
rows, and $j$ the horizontal coordinate, indexing the columns. (The prime reminds
that the components of $\gamma$ are the columns.)  The number of
cells in a column is called the height of that column and a cell $a$
in a column diagram is denoted $a=(i,j),$ where $i$ is the row index
and $j$ is the column index. The $augmented~ diagram$ of $\gamma$,
$\widehat{dg}(\gamma)=dg^{\prime}(\gamma)\cup \{(0,j): 1\leq j \leq
n\}$, is the column diagram with $n$ extra cells adjoined in row
$0$. This adjoined row is called the $basement$ and it always
contains the numbers 1 through $n$ in strictly increasing order. The
shape of $\widehat{dg}(\gamma)$ is defined to be $\gamma.$ For
example, column diagram and the augmented diagram for
$\gamma=(1,0,3,0,1,2,0)$ are
$$\begin{tikzpicture}[scale=0.35] \draw[line width=1pt] (0,0)
rectangle (1,1); \draw[line width=1pt] (2,0) rectangle
(3,3);\draw[line width=1pt] (4,0) rectangle (5,1);\draw[line
width=1pt] (5,0) rectangle (6,2); \draw[line width= 1pt] (0,0) --
(7,0); \draw[line width= 1pt] (2,1) -- (3,1);\draw[line width= 1pt]
(2,2) -- (3,2);\draw[line width= 1pt] (5,1) -- (6,1);
\draw[line width=1pt] (11,0) rectangle (12,1); \draw[line width=1pt]
(13,0) rectangle (14,3);\draw[line width=1pt] (15,0) rectangle
(16,1);\draw[line width=1pt] (16,0) rectangle (17,2); \draw[line
width= 1pt] (11,0) -- (18,0); \draw[line width= 1pt] (13,1) --
(14,1);\draw[line width= 1pt] (13,2) -- (14,2);\draw[line width=
1pt] (16,1) -- (17,1);
\node at (3,-2) {$dg^{\prime}(\gamma)$}; \node at (11.5, -0.5) {\small1};
\node at (12.5, -0.5) {\small2};
 \node at (13.5, -0.5) {\small3}; \node at (14.5, -0.5) {\small4}; \node at (15.5, -0.5) {\small5}; \node at (16.5, -0.5) {\small6}; \node at (17.5, -0.5) {\small7};
\node at (14, -2) {$\widehat{dg}(\gamma)$};
\end{tikzpicture}$$
An augmented filling  $F$ of an augmented diagram
$\widehat{dg}(\gamma)$ is a map $F: \widehat{dg}(\gamma)\rightarrow
[n],$ which can be pictured as an assignment of positive integer
entries to the non-basement cells of $\widehat{dg}(\gamma).$ Let
$F(i)$ denote the entry in the $i^{th}$ cell of the augmented
diagram encountered when $F$ is read across rows from left to right,
beginning at the highest row and working down to the bottom row.
This ordering of the cells is called the reading order. A cell $a
= (i, j)$ precedes a cell $b = (i^{\prime}, j^{\prime})$ in the
reading order if either $i^{\prime} < i$ or $i^{\prime} = i$ and
$j^{\prime} >j.$ The reading word  of $F$ is obtained by recording
the non-basement entries in reading order. The  content of an
augmented filling  $F$ is the  weak composition
$c(F)=(\alpha_1,\dots,\alpha_n)$ where   $\alpha_i$ is the number of
non-basement cells in $F$ with entry $i,$ and $n$ is the number of
basement elements.
The  standardization of  $F$ is the unique augmented filling
that one obtains  by sending the $i^{th}$ occurrence of $j$ in the
reading order to $i+\sum _{m=1}^{j-1}\alpha_m.$

 Let $a, b, c \in \widehat{dg}(\gamma)$ three cells
 situated as follows,
$\begin{tikzpicture}[scale=0.35] \draw[line width=1pt] (0,0)
rectangle (1,2); \draw[line width=1pt] (2.5,1) rectangle (3.5,2);
 \draw[line width= 1pt] (0,1) -- (1,1);
\node at (0.5,0.5) {\small b}; \node at (0.5,1.5) {\small a}; \node at (3,1.5)
{\small c}; \node at (1.75,1.5) {\small$\dots$};
\end{tikzpicture}$,
 where $a$ and $c$ are in the same row, possibly the first
row, possibly with cells between them, and the height of the column
containing $a$ and $b$ is greater than or equal to the height of the
column containing $c.$ Then the triple $a, b, c$ is an
inversion triple of type 1 if and only if after standardization
the ordering from smallest to largest of the entries in cells $a, b,
c$ induces a counterclockwise orientation. Similarly, consider three
cells $a, b, c\in \widehat{dg}(\gamma) $  situated as follows,
$\begin{tikzpicture}[scale=0.35] \draw[line width=1pt] (0,0)
rectangle (1,1); \draw[line width=1pt] (2.5,0) rectangle (3.5,2);
 \draw[line width= 1pt] (2.5,1) -- (3.5,1);
\node at (0.5,0.5) {\small a}; \node at (3,0.5) {\small c}; \node at (3,1.5) {\small b};
\node at (1.75,0.5) {\small$\dots$};
\end{tikzpicture}$
 where $a$ and $c$ are in the same row (possibly the
basement) and the column containing $b$ and $c$ has strictly greater
height than the column containing $a.$ The triple $a, b, c$ is an
 inversion triple of type 2  if and only if after
standardization the
ordering from smallest to largest of the entries in cells $a,~ b,~
c$ induces a clockwise orientation.

Define a  semi-skyline augmented filling (SSAF)  of an
augmented diagram $\widehat{dg}(\gamma)$ to be  an augmented filling
$F$ such that every triple is an inversion triple and columns are
weakly decreasing from bottom to top. The shape of the semi-skyline
augmented filling is $\gamma$ and denoted by $sh(F).$
The picture below is an example of a semi-skyline augmented filling
with shape $(1,0,3,2,0,1)$, reading word $1321346$ and content
$(2,1,2,1,0,1)$,
\begin{center}\begin{tikzpicture}[scale=0.35]
\draw[line width=1pt] (2,0) rectangle (3,3);\draw[line width=1pt]
(0,0) rectangle (1,1);
 \draw[line width=1pt]
(3,0) rectangle (4,2); \draw[line width=1pt] (5,0) rectangle (6,1);
 \draw[line width= 1pt] (0,0) -- (6,0); \draw[line width= 1pt]
(2,1) -- (4,1);\draw[line width= 1pt] (2,2) -- (3,2);
 \node at (0.5,-0.5) {1}; \node at (1.5, -0.5) {2}; \node at (2.5, -0.5) {3};
 \node at (3.5, -0.5) {4}; \node at (4.5, -0.5) {5}; \node at (5.5, -0.5) {6};\node at (0.5, 0.5) {1};
 \node at
(2.5,0.5){3}; \node at (2.5,1.5){3};\node at (2.5,2.5){1};\node at
(3.5,0.5){4};\node at (3.5,1.5){2};\node at (5.5,0.5){6};
\end{tikzpicture}
\end{center}
 The entry of a cell
in the first row of a SSAF is equal to the basement element where it
sits and, thus, in the first row the cell entries strictly increase from left
to the right. For any   weak composition $\gamma$ in $\mathbb{N}^n,$
there is a unique  SSAF,  with shape and content $\gamma,$ by putting
$\gamma_i$ cells with entries $i$ in the top of the basement element
$i.$  We call it key SSAF of shape $\gamma$.
The following is the key SSAF of shape  $(1,1,3,2,0,1)$,
\begin{equation}\label{existence-SSAF}
\begin{tikzpicture}[scale=0.35]
\draw[line width=1pt] (1,0) rectangle (2,1);
\draw[line width=1pt] (2,0) rectangle (3,3);\draw[line width=1pt]
(0,0) rectangle (1,1);
 \draw[line width=1pt]
(3,0) rectangle (4,2); \draw[line width=1pt] (5,0) rectangle (6,1);
 \draw[line width= 1pt] (0,0) -- (6,0); \draw[line width= 1pt]
(2,1) -- (4,1);\draw[line width= 1pt] (2,2) -- (3,2);
 \node at (0.5,-0.5) {1}; \node at (1.5, -0.5) {2}; \node at (2.5, -0.5) {3};
 \node at (3.5, -0.5) {4}; \node at (4.5, -0.5) {5}; \node at (5.5, -0.5) {6};\node at (0.5, 0.5) {1};
 \node at
(2.5,0.5){3}; \node at (2.5,1.5){3};\node at (2.5,2.5){3};\node at
(3.5,0.5){4};\node at (3.5,1.5){4};\node at (5.5,0.5){6};\node at (1.5,0.5){2};
\end{tikzpicture}
\end{equation}
In \cite{masonrsk} a sequence of  lemmas provides several conditions on
triples of cells in a SSAF. In particular, we recall Lemma~2.6 in \cite{masonrsk} which characterises completely the relative values of the entries in the cells of a type 2 inversion triple in a SSAF. This property of type 2 inversion triples will be used in the proof of our
main theorem. Given a cell $a$  in SSAF $F$ define $F(a)$ to be the
entry in $a$.
\begin{lem}\label{triples} {\em \cite{masonrsk}} If $a,b,c$
is a type $2$ inversion triple in $F$, as defined above, then  $F(a)< F(b)\leq F(c)$.
\end{lem}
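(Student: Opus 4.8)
The plan is to treat the two inequalities separately. The inequality $F(b)\le F(c)$ is free: in the type~$2$ configuration $b$ sits directly above $c$ in the same column, and columns of a SSAF weakly decrease from bottom to top, so $F(b)\le F(c)$. Everything therefore reduces to the strict inequality $F(a)<F(b)$, and here I would work entirely with the standardization. First I record the reading order of the three cells: $b$ lies in a higher row than $a$ and $c$, while $a$ and $c$ share a row with $a$ to the left of $c$, so $b$ precedes $a$ precedes $c$. Recall that standardization sends smaller entries to smaller values and, among equal entries, sends the one read earlier to the smaller value; hence $s(x)<s(y)$ exactly when $F(x)<F(y)$, or $F(x)=F(y)$ with $x$ before $y$ in the reading order. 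Since $F(b)\le F(c)$ and $b$ precedes $c$, we always have $s(b)<s(c)$, so the increasing order of the standardized entries of $a,b,c$ is one of $(a,b,c)$, $(b,a,c)$, $(b,c,a)$; the first and third are the clockwise (type~$2$ inversion) orders, while $(b,a,c)$ is the counterclockwise order forbidden in a SSAF.

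I would then argue by contradiction, assuming $F(a)\ge F(b)$. If in addition $F(a)\le F(c)$, then $F(b)\le F(a)\le F(c)$ together with the reading order $b\prec a\prec c$ forces the standardized increasing order to be exactly $(b,a,c)$; this says $a,b,c$ is not an inversion triple, contradicting that $F$ is a SSAF. Thus the whole difficulty is concentrated in excluding the remaining case $F(a)>F(c)$, in which $a,b,c$ is a perfectly legal clockwise triple and no local contradiction is available. I expect this to be the main obstacle, and it is genuinely global: the clockwise condition together with $F(b)\le F(c)$ alone do not imply $F(a)<F(b)$.

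To resolve it I would descend through the two columns $j:=\mathrm{col}(a)$ and $j':=\mathrm{col}(c)$, with $j<j'$, using the anchoring of the first row. Write $a=(r,j)$, $c=(r,j')$, $b=(r+1,j')$ and set $g(i)=F(i,j)-F(i,j')$ for $1\le i\le r$ (both columns reach row $r$). Because the first-row entry of a SSAF equals its basement index, $g(1)=j-j'<0$, whereas $g(r)=F(a)-F(c)>0$; so there is a smallest row $\rho$, with $2\le \rho\le r$, at which $g$ becomes positive, and $g(\rho-1)\le 0$. Now examine the type~$2$ triple formed by the cells $(\rho-1,j)$, $(\rho,j')$, $(\rho-1,j')$ (same two columns, hence still a type~$2$ position). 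From $g(\rho-1)\le 0$ we get $F(\rho-1,j)\le F(\rho-1,j')$; the weak decrease of column $j'$ gives $F(\rho,j')\le F(\rho-1,j')$; and $g(\rho)>0$ combined with the weak decrease of column $j$ gives $F(\rho,j')<F(\rho,j)\le F(\rho-1,j)$. Hence this triple has entries $B<A\le C$ at its $b$-, $a$-, $c$-cells respectively, so its standardized increasing order is again $(b,a,c)$, i.e.\ it is not an inversion triple, contradicting that $F$ is a SSAF. Therefore $F(a)>F(c)$ is impossible, $F(a)\le F(c)$, and the previous paragraph then yields $F(a)<F(b)$, completing the argument. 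Mason's own account instead isolates the needed column comparisons in the lemmas preceding this one in \cite{masonrsk}, which one could cite to shorten this last step.
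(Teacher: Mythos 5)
Your proof is correct, and there is nothing in the paper to measure it against: the paper states this lemma as a quotation of Mason's Lemma~2.6 in \cite{masonrsk} and gives no proof of its own. Your reduction is sound --- $F(b)\le F(c)$ from the weak decrease of column $j'$, the reading order $b\prec a\prec c$, standardization breaking ties by reading order, and hence only three possible increasing orders of which exactly $(b,a,c)$ is counterclockwise --- and you correctly isolate the real content of the lemma: excluding $(b,a,c)$ only rules out $F(b)\le F(a)\le F(c)$, while the configuration $F(a)>F(c)$ gives the perfectly legal clockwise order $(b,c,a)$ and so cannot be killed by looking at the triple $a,b,c$ alone. Your descent through rows $1,\dots,r$ of the two columns does the global work: it is anchored at $g(1)=j-j'<0$ by the first-row-equals-basement property, which the paper states explicitly (and which Mason proves before her Lemma~2.6, so there is no circularity), and at the first sign change of $g$ it produces a genuine type~$2$ triple (the strict height inequality between the two columns is inherited from the original triple) whose entries satisfy $B<A\le C$ and whose standardized order is therefore the forbidden $(b,a,c)$. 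Mason reaches the same conclusion through a chain of preparatory lemmas comparing the entries of the two columns row by row; your single first-sign-change argument packages those comparisons into one step and is, if anything, more transparent. The only case you leave implicit is $a,c$ in the basement, where $F(a)=j<j'=F(1,j')=F(b)\le F(c)=j'$ is immediate from the same first-row property; it would be worth one sentence.
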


\subsection{An analogue of Schensted insertion and  RSK for SSAFs}
 The fundamental
operation of the Robinson--Schensted--Knuth  (RSK) algorithm is
Schensted insertion which is a procedure for inserting a positive
integer $k$ into a semi-standard Young tableau $T$. In \cite{masonrsk}, Mason
defines a similar procedure for inserting a positive integer $k$
into a SSAF $F$,
which is used  to describe an analogue of the RSK algorithm. If $F$
is a SSAF of shape
 $\gamma$, we set
 $F := (F(j)),$ where $F(j)$ is the entry in the $j^{th}$ cell in
reading order, with the cells in the basement included, and $j$ goes
from $1$ to $n + \sum_{i=1}^{n}\gamma_i.$  If $\hat{j}$ is the cell
immediately above $j$ and the cell is empty, set $F(\hat{j} ) := 0.$
The operation $k \rightarrow F,$ for $k \leq n$, is defined as
follows.

\noindent{\bf Procedure. The insertion $k \rightarrow F$}:

1. Set $i := 1,$ set $x_1 := k,$  and set $j :=
1.$

2. If $F(j) < x_i$ or $F(\hat{j})\geq x_i,$ then increase $j$ by 1
and repeat this step. Otherwise, set $x_{i+1} := F(\hat{j} )$ and
set $F(\hat{j}) := x_i.$

3. If $x_{i+1} \neq 0$ then increase $i$ by 1, increase $j$ by 1,
and repeat step 2. Otherwise, terminate the algorithm.


  The procedure terminates in
finitely many steps and the result is a SSAF.
\begin{ex} Insertion $3$  to the SSAF.
\begin{center}$\begin{tikzpicture}[scale=0.4]
\draw[line width=1pt] (2,0) rectangle (3,3);
 \draw[line width=1pt]
(3,0) rectangle (4,2); \draw[line width=1pt] (5,0) rectangle (6,1);
 \draw[line width= 1pt] (0,0) -- (6,0); \draw[line width= 1pt]
(2,1) -- (4,1);\draw[line width= 1pt] (2,2) -- (3,2);
\draw[arrows=->,line width=1 pt] (0.8,2.9)--(1.8,2.9); \node at
(0.5,-0.5) {1}; \node at (1.5, -0.5) {2}; \node at (2.5, -0.5) {3};
 \node at (3.5, -0.5) {4}; \node at (4.5, -0.5) {5}; \node at (5.5, -0.5)
{6};
 \node at
(2.5,0.5){3}; \node at (2.5,1.5){2};\node at (2.5,2.5){1};\node at
(3.5,0.5){4};\node at (3.5,1.5){1};\node at (5.5,0.5){6};
\node at (0.5,2.9){3};
\end{tikzpicture}$
$~~\begin{tikzpicture}[scale=0.45]
\draw[line width=1pt] (2,0) rectangle (3,3);
 \draw[line width=1pt]
(3,0) rectangle (4,2); \draw[line width=1pt] (5,0) rectangle (6,1);
 \draw[line width= 1pt] (0,0) -- (6,0); \draw[line width= 1pt]
(2,1) -- (4,1);\draw[line width= 1pt] (2,2) -- (3,2); \node at
(0.5,-0.5) {1}; \node at (1.5, -0.5) {2}; \node at (2.5, -0.5) {3};
 \node at (3.5, -0.5) {4}; \node at (4.5, -0.5) {5}; \node at (5.5, -0.5)
{6};
 \node at
(2.5,0.5){3}; \node at (2.5,1.5){3};\node at (2.5,2.5){1};\node at
(3.5,0.5){4};\node at (3.5,1.5){1};\node at (5.5,0.5){6};\node at
(4.8,2.5){\bf{\color{red}2}};
\end{tikzpicture}$
$~~\begin{tikzpicture}[scale=0.45]
\draw[line width=1pt] (2,0) rectangle (3,3);
 \draw[line width=1pt]
(3,0) rectangle (4,2); \draw[line width=1pt] (5,0) rectangle (6,1);
 \draw[line width= 1pt] (0,0) -- (6,0); \draw[line width= 1pt]
(2,1) -- (4,1);\draw[line width= 1pt] (2,2) -- (3,2); \node at
(0.5,-0.5) {1}; \node at (1.5, -0.5) {2}; \node at (2.5, -0.5) {3};
 \node at (3.5, -0.5) {4}; \node at (4.5, -0.5) {5}; \node at (5.5, -0.5)
{6};
 \node at(2.5,0.5){3}; \node at (2.5,1.5){3};\node at (2.5,2.5){1};\node at
(3.5,0.5){4};\node at (3.5,1.5){2};\node at (5.5,0.5){6};\node at
(4.8,2.5){{\bf\color{red}1}};
\end{tikzpicture}$
$~~\begin{tikzpicture}[scale=0.45]
\draw[line width=1pt] (2,0) rectangle (3,3);
 \draw[line width=1pt]
(3,0) rectangle (4,2); \draw[line width=1pt] (5,0) rectangle
(6,1);\draw[line width=1pt] (5,1) rectangle (6,2);
 \draw[line width= 1pt] (0,0) -- (6,0); \draw[line width= 1pt]
(2,1) -- (4,1);\draw[line width= 1pt] (2,2) -- (3,2); \node at
(0.5,-0.5) {1}; \node at (1.5, -0.5) {2}; \node at (2.5, -0.5) {3};
 \node at (3.5, -0.5) {4}; \node at (4.5, -0.5) {5}; \node at (5.5, -0.5)
{6};
 \node at
(2.5,0.5){3}; \node at (2.5,1.5){3};\node at (2.5,2.5){1};\node at
(3.5,0.5){4};\node at (3.5,1.5){2};\node at (5.5,0.5){6};\node at
(5.5,1.5){1};
\end{tikzpicture}$\end{center}
\end{ex}
Let {SSAF}$_n$ be the set of all semi-skyline augmented fillings with basement $[n]$. Based on this
Schensted insertion analogue, Mason gives a weight preserving and
shape rearranging bijection $\Psi$ between SSYT$_n$ and SSAF$_n$.
The bijection $\Psi$ is defined to be  the
insertion, from right to left, of the   column word  of a SSYT into the empty SSAF with basement $1,\ldots,n$.
 The shape of $\Psi(T)$ provides the  right key  of $T$,
$K_+(T)$,  a notion  due to Lascoux and Sch\"utzenberger \cite{lasschutz,lascouxkeys}. There are now several ways to describe the right key of
a tableau  \cite{lascouxkeys, fulton, lenart,masondemazure,willis}.
 For our purpose we consider the following Mason's theorem as the definition of right key  of $T$.
\begin{thm}[\sc Mason \cite{masondemazure}]
\label{SAFSSYT}
Given an arbitrary SSYT $T$, let $\gamma$ be the shape of $\Psi(T).$
Then $K_+(T)=key(\gamma).$
\end{thm}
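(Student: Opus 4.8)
The plan is to reduce the equality $K_+(T)=key(\gamma)$ to a statement purely about contents. First I would note that, since $\Psi$ is shape-rearranging, $\gamma$ is a rearrangement of $sh(T)=\lambda$, so $key(\gamma)$ has shape $\gamma^+=\lambda$, which is also the shape of $K_+(T)$. By the bijection between weak compositions and key tableaux recalled in Section~\ref{sec:weak}, a key is \emph{uniquely determined by its content}, and $K_+(T)$ is a key by construction. Hence it suffices to prove that $content(K_+(T))=\gamma$; equivalently, that for each $i\in[n]$ the number of cells of $K_+(T)$ equal to $i$ equals $\gamma_i$, the height of the column of $\Psi(T)$ erected over the basement entry $i$.

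Next I would run an induction on the number $m$ of columns of $T$, exploiting that $\Psi$ inserts the column word from right to left, i.e. one whole column of $T$ at a time (shortest/rightmost first), and within a column from top to bottom. The base case is a single strictly increasing column $C$: a direct run of the insertion procedure shows that each entry $c$ of $C$ settles into the bottom cell over basement $c$, so the resulting SSAF shape is the indicator vector of the entry set of $C$, which matches $content(K_+(C))=content(C)$ since a single column is already a key. For the inductive step I would maintain the strengthened invariant that, after inserting columns $j,j+1,\dots,m$, the current SSAF has shape equal to $content\bigl(K_+(T_{\ge j})\bigr)$, where $T_{\ge j}$ is the SSYT obtained by left-justifying the rightmost $m-j+1$ columns. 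Taking $j=1$ then yields $content(K_+(T))=\gamma$, and with the content reduction above this gives $K_+(T)=key(\gamma)$. The bookkeeping is consistent with content preservation under $\Psi$: inserting the single column $j$ adds exactly $\ell_j$ cells, matching the $\ell_j$-cell difference between $K_+(T_{\ge j})$ and $K_+(T_{\ge j+1})$.

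The hardest part will be establishing this invariant, namely that the landing positions of Mason's insertion paths reproduce the successive columns of the Lascoux--Sch\"utzenberger right key \cite{lasschutz,lascouxkeys}. The essential tool is Lemma~\ref{triples}: the type~$2$ inversion-triple inequality $F(a)<F(b)\le F(c)$ controls precisely which basement an inserted value settles over and which value is bumped upward, so that inserting the next column to the left raises the SSAF columns by one exactly in the positions dictated by the right-key recursion. Concretely, I expect the main obstacle to be twofold: verifying that the weakly-decreasing-column and inversion-triple conditions defining an SSAF \emph{force} each inserted entry to record the correct right-key value (so the new top cells carry the frank/maximal columns of the right key), and verifying that subsequent leftward insertions never disturb the already-recorded structure. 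Once this propagation is settled, the induction closes and the theorem follows.
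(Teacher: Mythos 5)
The paper does not prove this statement: it is quoted from Mason \cite{masondemazure}, and the surrounding text explicitly says that the authors take it \emph{as the definition} of the right key $K_+(T)$. So there is no proof in the paper to compare yours against; any genuine proof must start from an independent definition of $K_+(T)$ (Lascoux--Sch\"utzenberger's, via frank words or jeu de taquin), which is what your proposal implicitly does.

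On its own terms, your proposal has a real gap. The reduction to contents is sound: $K_+(T)$ is a key of shape $\gamma^+=sh(T)$, a key is determined by its content, and $c(key(\gamma))=\gamma$, so it suffices to show $c(K_+(T))=sh(\Psi(T))$. The induction set-up is also coherent, since the suffix of $col(T)$ coming from the rightmost $m-j+1$ columns is exactly $col(T_{\ge j})$, so the partial SSAF is $\Psi(T_{\ge j})$ and your invariant is just the theorem applied to $T_{\ge j}$. But this means the entire mathematical content of the theorem is concentrated in the inductive step --- showing that inserting one more column (note: bottom to top, not top to bottom, since $\Psi$ reads $col(T)$ from right to left) moves the SSAF shape from $c(K_+(T_{\ge j+1}))$ to $c(K_+(T_{\ge j}))$ --- and you do not carry it out; you only name Lemma~\ref{triples} as ``the essential tool'' and list the two verifications that would be needed. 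That step requires (i) an explicit description of how the right key changes when a column is prepended to a SSYT, which in the Lascoux--Sch\"utzenberger framework is governed by frank words or maximal column factorizations and is itself nontrivial, and (ii) a precise analysis of where each of the $\ell_j$ insertion paths terminates, which is the heart of Mason's argument in \cite{masondemazure,masonrsk}. As written, the proposal is a plausible outline of that argument, not a proof.
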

 Given the partition $\lambda\in \mathbb{N}^n$, let $\mathfrak{B}^\lambda$ denote the set of all semi-standard Young tableaux in  {SSYT}$_n$ of shape $\lambda$.
 This theorem decompose   $\mathfrak{B}^\lambda$ into a disjoint union of semi-standard Young tableaux w.r.t. to their right keys:
$$\mathfrak{B}^\lambda =\biguplus_{\begin{smallmatrix}\gamma\in \mathfrak{S}_n\lambda
\end{smallmatrix}}\{T\in {SSYT}_n:K_+(T)=key(\gamma)\}.$$
\begin{ex}\label{ex9} One has $sh(\Psi(T))=(2,0,4,3,1)$,
\begin{center}$\begin{tikzpicture}[scale=0.4]
\draw[line width=1pt] (0,0) rectangle (1,4);
 \draw[line width=1pt]
(1,0) rectangle (2,3); \draw[line width=1pt] (2,0) rectangle
(3,2);\draw[line width=1pt] (3,0) rectangle (4,1);
 \draw[line width= 1pt] (0,1) -- (4,1); \draw[line width= 1pt]
(0,2) -- (3,2);\draw[line width= 1pt] (0,3) -- (1,3); \node at
(0.5,0.5) {1}; \node at (0.5, 1.5) {2}; \node at (0.5, 2.5) {3};
 \node at (0.5, 3.5) {5}; \node at (1.5, 0.5) {1}; \node at (1.5, 1.5) {3};
 \node at(1.5,2.5){4}; \node at (2.5,0.5){1};\node at (2.5,1.5){4};\node at
(3.5,0.5){3}; \node at (2,-1.5){$T$}; \draw[arrows=->,line width=1
pt] (5,0)--(8,0); \draw[line width=1pt] (9,0) rectangle (10,2);
 \draw[line width=1pt]
(11,0) rectangle (12,4); \draw[line width=1pt] (12,0) rectangle
(13,3);\draw[line width=1pt] (13,0) rectangle (14,1); \draw[line
width= 1pt] (9,0) -- (14,0);
 \draw[line width= 1pt] (9,1) -- (10,1); \draw[line width= 1pt]
(11,1) -- (13,1);\draw[line width= 1pt] (11,2) -- (13,2);\draw[line
width= 1pt] (11,3) -- (12,3);
 \node at (9.5,-0.5) {1}; \node at (10.5,
-0.5) {2}; \node at (11.5, -0.5) {3};
 \node at (12.5, -0.5) {4}; \node at (13.5, -0.5) {5};
 \node at (9.5, 0.5) {1};
 \node at(9.5,1.5){1}; \node at (11.5,0.5){3};\node at (11.5,1.5){3};\node at
(11.5,2.5){3};\node at (11.5,3.5){1};\node at (12.5,0.5){4};\node at
(12.5,1.5){4};\node at (12.5,2.5){2};\node at (13.5,0.5){5}; \node
at (11.5,-1.5){$\Psi(T)$};\node at (6.5,0.5){$\Psi$};\node at (15,0){$,$};
\end{tikzpicture}$$~~~~
\begin{tikzpicture}[scale=0.45]
 \draw[line
width=1pt] (11,-7) rectangle (12,-3); \draw[line width=1pt] (12,-7)
rectangle (13,-4); \draw[line width=1pt] (13,-7) rectangle
(14,-5);\draw[line width=1pt] (14,-7) rectangle (15,-6);
 \draw[line width= 1pt] (11,-6) -- (15,-6); \draw[line width= 1pt]
(11,-5) -- (14,-5);\draw[line width= 1pt] (11,-4) -- (12,-4); \node
at (11.5,-6.5) {1}; \node at (11.5, -5.5) {3}; \node at (11.5, -4.5)
{4};
 \node at (11.5, -3.5) {5}; \node at (12.5, -6.5) {1}; \node at (12.5, -5.5) {3};
\node at(12.5,-4.5){4}; \node at (13.5,-6.5){3};\node at
(13.5,-5.5){4};\node at (14.5,-6.5){3};
\node at (13,-8.5){$K_+(T)=key(2,0,4,3,1)$};
\end{tikzpicture}$
\end{center}
\end{ex}
Given the alphabet $[n]$, the RSK algorithm is a bijection between
biwords in lexicographic order and pairs of SSYT of the same shape
over $[n]$. Equipped with the Schensted insertion analogue,  Mason
 finds in \cite{masonrsk} an analogue $\Phi$ of the RSK yielding a pair
of SSAFs  with shapes a rearranging of each other. This bijection has an advantage over the classical RSK because
the pair of SSAFs  comes along with the extra pair of right keys.

The two line array
$w=\left(\begin{array}{cccc}i_1&i_2&\cdots&i_l\\j_1&j_2&\cdots&j_l
\end{array}\right),~~i_{r}< i_{r+1},~~$ or $~~i_r=i_{r+1} ~~\&~~ j_{r} \le j_{r+1},~$ $1\le i,j\le l-1$,
with $i_r, ~j_r\in [n]$, is called a biword in lexicographic order
over the alphabet $[n]$. The map $\Phi$ defines a bijection between
the set $\mathbb{A}_n$ of all  biwords $w$ in lexicographic order in
the alphabet $[n]$, and pairs  of SSAFs with shapes in the same $\mathfrak{S}_n$-orbit,
and the
contents are  those of the second and first rows of $w$, respectively.

\noindent{\bf Procedure. The map $\Phi:\mathbb{A}_n\longrightarrow
{SSAF}_n \times {SSAF}_n$}. Let $w~\in \mathbb{A}_n$.

 1. Set $r := l,$ where $l$ is the number of biletters in
$w$. Let $F =\emptyset= G,$ where $\emptyset$ is the empty $SSAF.$

2. Set $F := (j_r\rightarrow F).$ Let $h_r$ be the height of the
column in $(j_r\rightarrow F)$ at which the insertion procedure
$(j_r \rightarrow F)$ terminates.

3. Place $i_r$ on top of the leftmost column of height $h_r - 1$ in
$G$ such that doing so preserves the decreasing property of columns
from bottom to top. Set $G$ equal to the resulting figure.

4. If $r - 1 \neq 0,$ repeat step 2 for $r := r - 1.$ Else terminate
the algorithm.

\begin{re} \label{insertion}
$1.$ The entries in the top row of the biword are weakly increasing
when read from left to right. Henceforth, if $h_r > 1,$ placing
$i_r$ on top of the leftmost column of height $h_r -1$ in $G$
preserves the decreasing property of columns. If $h_r = 1,$ the
$i^{th}_ r$ column of $G$ does not contain an entry from a previous
step. It means that the number $i_r$ sits on the top of basement $i_r.$

\noindent $2.$ Let $h$ be the height of the column in $F$ at which
the insertion procedure
$(j \rightarrow F)$ terminates.
Lemma~\ref{triples} implies that there is no  column of height
$h+1$ in $ F$ to the right.

\noindent $3.$ Steps $2$ and $3$ guarantee that, at each stage of  the map $\Phi$ procedure, the shapes of the pair of SSAFs are a rearrangement of each other.
\end{re}
\begin{cor}[\sc Mason \cite{masondemazure,masonrsk}]
\label{colSAFSSYT}
The RSK algorithm commutes with the above analogue $\Phi.$ That is,
if $(P,Q)$ is the pair of SSYTs produced by RSK algorithm applied to
biword $w,$ then $(\Psi(P),\Psi(Q))=\Phi(w),$ and
$K_+(P)=key(sh(\Psi(P)))$, $K_+(Q)=key(sh(\Psi(Q)))$.
\end{cor}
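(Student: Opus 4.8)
The two key-tableau identities are nothing but Theorem~\ref{SAFSSYT} applied separately to $P$ and to $Q$: once the SSAFs $\Psi(P)$ and $\Psi(Q)$ are in hand, their shapes are by definition the weak compositions whose keys are $K_+(P)$ and $K_+(Q)$. So the whole content lies in the commutation $(\Psi(P),\Psi(Q))=\Phi(w)$. To organise the argument I would first fix notation: write $w_\downarrow=j_1\cdots j_l$ for the bottom row of $w$, so that $P=\mathrm{Ptab}(w_\downarrow)$ is its Schensted insertion tableau, and let $\rho(u)$ denote the SSAF obtained by inserting the letters of a word $u$ into the empty SSAF \emph{from right to left}, i.e. the rightmost letter first. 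With this notation $\Psi(T)=\rho(col(T))$ by the definition of $\Psi$, while Step~2 of the $\Phi$-procedure (run for $r=l,\dots,1$) builds its first component as exactly $\rho(w_\downarrow)$.

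The plan for the insertion tableau is to prove that SSAF-insertion is constant on plactic classes, that is $\rho=\Psi\circ\mathrm{Ptab}$, equivalently $\rho(u)=\rho(u')$ whenever $u$ and $u'$ are Knuth equivalent. Granting this, since $w_\downarrow$ and $col(P)$ have the same insertion tableau $P$ they are Knuth equivalent, whence $\rho(w_\downarrow)=\rho(col(P))=\Psi(P)$ and the first components of $\Phi(w)$ and of $(\Psi(P),\Psi(Q))$ agree. I would establish the plactic invariance by reducing to the two elementary Knuth relations on three consecutive letters and checking each against the insertion Procedure. The point is that reordering two successive insertions can only change the outcome when their bumping paths meet in a common column; there Lemma~\ref{triples}, which forces $F(a)<F(b)\le F(c)$ in every type~$2$ inversion triple, rigidly controls which cell is bumped and shows the two orders of insertion produce the same SSAF.

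For the recording tableau the difficulty is that $\Phi$ scans the biletters in the opposite order to classical RSK, so there is no box-by-box match between the growth of $P$ and the heights $h_r$ recorded by Step~3; in particular the naive single-letter intertwining $\Psi(T\leftarrow k)=k\to\Psi(T)$ already fails. I would therefore deduce $G=\Psi(Q)$ by passing to the inverse biword $w^{-1}$ (swap the two rows and re-sort lexicographically). Classically $\mathrm{Ptab}\big((w^{-1})_\downarrow\big)=Q$, so the plactic-invariance result of the previous paragraph, applied to $w^{-1}$, identifies the first component of $\Phi(w^{-1})$ with $\Psi(Q)$. It then suffices to prove the symmetry $\Phi(w^{-1})=\big(\Phi(w)_2,\Phi(w)_1\big)$, namely that inverting the biword interchanges the insertion and recording SSAFs; I would prove this independently of the Corollary by standardization, reducing to permutation biwords and tracking, via Remark~\ref{insertion}, how the column of $F$ whose height rises to $h_r$ and the column of $G$ that receives $i_r$ are swapped under $w\mapsto w^{-1}$.

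The main obstacle is the plactic invariance of SSAF-insertion: it is the one genuinely nontrivial input, and it is precisely here that the type~$2$ inversion triple inequality of Lemma~\ref{triples} does the work of an exchange/commutation lemma. Everything else --- the reduction of the key identities to Theorem~\ref{SAFSSYT}, and the reversed-order bookkeeping for the recording tableau --- is then either immediate or a standardization argument built on top of that single lemma.
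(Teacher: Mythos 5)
The paper offers no proof of Corollary~\ref{colSAFSSYT} to compare against: it is quoted verbatim from Mason's work \cite{masondemazure,masonrsk}, and the authors only record its consequences in the scheme that follows. So your proposal has to be judged on its own. The reduction of the two key identities to Theorem~\ref{SAFSSYT} is correct (indeed, in this paper that theorem \emph{is} the definition of $K_+$), and your plan for the insertion component is sound in outline and close in spirit to Mason's actual argument: since $w_\downarrow$ is Knuth equivalent to $col(P)$, it suffices to show that right-to-left SSAF insertion $\rho$ is constant on plactic classes, and the verification of the two elementary Knuth moves is a finite (if fiddly) case analysis in which Lemma~\ref{triples} is the correct control on where an insertion terminates. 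That half is only sketched, but the idea would work.

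The genuine gap is in the recording component. You derive $G=\Psi(Q)$ from the symmetry $\Phi(w^{-1})=(\Phi(w)_2,\Phi(w)_1)$, but in this paper --- and in Mason's --- that symmetry is obtained \emph{as a consequence} of Corollary~\ref{colSAFSSYT} together with the symmetry of classical RSK (this is exactly what the sentence ``it is clear the RSK analogue $\Phi$ also shares the symmetry of RSK'' after the corollary asserts). Your claim to prove it independently ``by standardization and tracking via Remark~\ref{insertion}'' is not a proof mechanism: for permutation biwords the analogous statement for RSK is already a substantial theorem (Viennot shadows, growth diagrams), and nothing in Remark~\ref{insertion} explains why inverting the biword interchanges the two SSAFs produced by the asymmetric Steps~2 and~3 of the $\Phi$-procedure. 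As it stands the recording half is circular, or at best rests on an unproved statement at least as hard as the corollary itself. The non-circular route is the one Mason takes: show that the height $h_r$ at which $j_r\rightarrow F$ terminates coincides with the index of the row in which Schensted insertion of $j_r$ into $P$ creates its new box, and then show that Step~3's rule (place $i_r$ on the leftmost column of $G$ of height $h_r-1$ preserving column decrease) produces exactly $\Psi(Q')$, where $Q'$ is $Q$ with $i_r$ adjoined in row $h_r$; both points are proved by induction on the biletters using the structure lemmas for SSAFs, with no appeal to symmetry.
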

This result is summarised in the following scheme from which, in
particular, it is clear   the RSK analogue $\Phi$ also shares the
symmetry of RSK,
$$\begin{tikzpicture} [scale=0.99]\draw [line width=1pt] (0.2,0.4)--(1.7,1.8);
\draw [line width=1pt] (2.2,1.8)--(3.9,0.4);\draw [line width=1pt]
(0.6,0)--(3.4,0); \node at (0,0){$(P,Q)$};\node at (2,2){$w$};\node
at (4,0){$(F,G)$}; \node at (0.5,1.2){RSK};\node at
(3.5,1.2){$\Phi$};\node at (2,-0.5){$\Psi$}; \node at
(9,1){$sh(F)^+=sh(G)^+=sh(P)=sh(Q)$,}; \node at
(9,0.5){$K_+(P)=key(sh(F)),~K_+(Q)=key(sh(G)).$}; \node at
(9,1.5){$c(P)=c(F),~c(Q)=c(G),$};
\end{tikzpicture}$$

\section{Main Theorem}
\label{sec:main}
 We prove a restriction of the bijection $\Phi$ to  multisets
 of cells in a  staircase or truncated  staircase  of length $n$, such that the staircases of length $n-k$   on the upper left corner, or  of length $n-m$ on the bottom right
 corner,  with $1\le m\le n$, $1\le k\le n$ and $k+m\ge n+1$, are erased.
 The restriction to be imposed on the pairs of SSAFs is that the pair of shapes  in a same $\mathfrak{S}_n$-orbit,
satisfy an inequality in the Bruhat
order,
 where one shape is bounded by the reverse of the other. Equivalently, pairs of SSYTs  whose right keys are such that  one is bounded by the evacuation of the other.
The following lemma gives sufficient conditions to preserve the Bruhat order relation between two weak compositions when one box is added to a column of their diagrams.
\begin{lem}\label{propvector}
Let $\alpha=(\alpha_1,\alpha_2,\ldots,\alpha_n)$ and
$\beta=(\beta_1,\beta_2,\ldots,\beta_n)$  be in the same $
  \mathfrak{S}_{n}$-orbit,   with
$key(\beta)\leq key(\alpha).$ Given  $k\in \{1,\ldots,n\}$, let
$k^{\prime}\in \{1,\ldots,n\}$ be such that $\beta_{k^{\prime}}$ is
the left most  entry of $\beta$ satisfying
$\beta_{k^{\prime}}=\alpha_k.$ Then if
$\tilde{\alpha}=(\alpha_1,\alpha_2,\ldots,\alpha_k+1,\ldots,\alpha_n)$
and
$\tilde{\beta}=(\beta_1,\beta_2,\ldots,\beta_{k^{\prime}}+1,\ldots,\beta_n),$
it holds  $key(\tilde{\beta})\leq key(\tilde{\alpha}).$
\end{lem}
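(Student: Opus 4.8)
The plan is to prove the statement directly in its tableau form, showing $key(\tilde\beta)\le key(\tilde\alpha)$ entrywise (which, by Theorem~\ref{vBru}$(a)$, is the correct rendering of the Bruhat relation on the orbit). First I would record that $\tilde\alpha$ and $\tilde\beta$ lie in a common $\mathfrak{S}_n$-orbit: $\alpha$ and $\beta$ share a multiset of entries, and each of $\tilde\alpha,\tilde\beta$ is obtained by replacing one entry equal to $v:=\alpha_k=\beta_{k'}$ by $v+1$; hence $\tilde\alpha$ and $\tilde\beta$ are again rearrangements of a single partition, so $key(\tilde\alpha)$ and $key(\tilde\beta)$ have the same shape and the comparison is meaningful.

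Next I would use the column description of keys: column $c$ of $key(\gamma)$ carries exactly the entries $\{j:\gamma_j\ge c\}$, listed increasingly. Raising $\alpha_k$ from $v$ to $v+1$ changes these column sets only in column $v+1$, where the index $k$ is adjoined; likewise $key(\tilde\beta)$ differs from $key(\beta)$ only in column $v+1$, where $k'$ is adjoined. Since $key(\beta)\le key(\alpha)$ already gives entrywise domination in every column, and all columns with $c\ne v+1$ are untouched, the whole problem collapses to the single column $v+1$: I must show that $B\cup\{k'\}$ is entrywise dominated by $A\cup\{k\}$ (both sorted increasingly), where $A=\{j:\alpha_j\ge v+1\}$, $B=\{j:\beta_j\ge v+1\}$, and $B\le A$ entrywise.

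Here I would pass to the equivalent ``count'' form of domination of equal-length increasing sequences: $B\le A$ iff $\#\{j\in B:j\ge t\}\le\#\{j\in A:j\ge t\}$ for every $t$. After adjoining $k,k'$ the target becomes, for all $t$, $\#\{j\ge t:\beta_j\ge v+1\}+[k'\ge t]\le\#\{j\ge t:\alpha_j\ge v+1\}+[k\ge t]$, where $[\cdot]$ is the Iverson bracket. Comparing brackets, the only case not settled at once by the column-$(v+1)$ hypothesis is $k<t\le k'$ (which forces $k<k'$): there the left bracket is $1$ and the right is $0$, so I need the \emph{strict} inequality $\#\{j\ge t:\beta_j\ge v+1\}<\#\{j\ge t:\alpha_j\ge v+1\}$.

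The step I expect to be the main obstacle is precisely upgrading the non-strict hypothesis to this strict bound, and this is exactly where the \emph{leftmost} choice of $k'$ and the column-$v$ domination enter. For $t\le k'$, every index with $\beta_j=v$ sits at a position $\ge k'\ge t$ (because $k'$ is leftmost with value $v$), so all $M:=\#\{j:\beta_j=v\}$ of them are counted, giving $\#\{j\ge t:\beta_j\ge v\}=\#\{j\ge t:\beta_j\ge v+1\}+M$. On the other side $\alpha_k=v$ but $k<t$, so at least one $v$-valued index of $\alpha$ is \emph{not} counted, whence $\#\{j\ge t:\alpha_j\ge v\}\le\#\{j\ge t:\alpha_j\ge v+1\}+(M-1)$, using that $\alpha,\beta$ share the multiplicity $M$ of $v$. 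Feeding these into the column-$v$ domination $\#\{j\ge t:\beta_j\ge v\}\le\#\{j\ge t:\alpha_j\ge v\}$ (trivial with equality when $v=0$) yields $\#\{j\ge t:\beta_j\ge v+1\}\le\#\{j\ge t:\alpha_j\ge v+1\}-1$, the required strict inequality. Reassembling the cases gives the column-$(v+1)$ domination for the modified compositions, hence $key(\tilde\beta)\le key(\tilde\alpha)$ and the lemma.
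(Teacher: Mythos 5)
Your proof is correct, and it takes a genuinely different route from the one in the paper. Both arguments begin the same way: only column $v+1$ of the keys changes (the paper writes $m$ for your $v=\alpha_k=\beta_{k'}$), so everything reduces to comparing that single column, and both exploit the leftmost choice of $k'$ at the decisive moment. After that the mechanisms diverge. The paper stays positional: it locates the rows $t$ and $z$ where $k'$ and $k$ are inserted, cuts column $v+1$ into three blocks, and runs two cases ($t<z$ via an interleaving chain $k'<b_t\le d_t<\cdots$, and $z\le t$ via transitivity through column $v$, using that the first rows of columns $v$ and $v+1$ of $key(\tilde\beta)$ coincide). You instead convert entrywise domination of strictly increasing columns into the threshold-count criterion $\#\{j\in B: j\ge t\}\le\#\{j\in A: j\ge t\}$, which collapses the whole problem to one critical window $k<t\le k'$ where a strict inequality is needed; you then extract that strict inequality from column-$v$ domination together with the multiplicity bookkeeping $\#\{j\ge t:\beta_j=v\}=M$ versus $\#\{j\ge t:\alpha_j=v\}\le M-1$. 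What your version buys is a shorter case analysis, no need to track individual cell entries, and a uniform treatment of $v=0$ (which the paper explicitly leaves to the reader); what the paper's version buys is that it works directly on the tableaux in the form used elsewhere in the text, making visible exactly which cells shift when the new entry is adjoined.
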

\begin{proof} Let $k,~ k^{\prime}\in \{1,\dots,n\}$ as in the lemma, and put $\alpha_k=\beta_{k^{\prime}}=m\ge 1.$
(The proof for $m=0$ is left to the reader. The case of interest for
our problem is $m>0$ which is related with the procedure of map
$\Phi$.) This means that $k$ appears exactly in the  first $m$
columns of $key(\alpha)$, and   $k^{\prime}$ is the smallest number
that does not appear in column $m+1$ of $key(\beta)$ but appears
exactly  in the
 first $m$ columns. Let $t$ be the row index of the cell with entry $k'$ in column $m$ of $key(\beta)$. Every entry less than $k^{\prime}$ in
  column $m$ of $key(\beta)$ appears in
column $m+1$ as well,   and since in a key tableau each column is
contained in the previous one, this implies that the first $t$ rows of
columns
 $m$ and $m+1$ of $key(\tilde{\beta})$ are equal.
 The only difference between $key(\tilde\beta)$ and $key(\beta)$ is in columns $m+1$, from row $t$ to the top.  Similarly if $z$ is the row
 index of the cell with entry $k$ in column $m+1$ of $key(\tilde\alpha)$, the only difference between $key(\tilde\alpha)$ and  $key(\alpha)$ is
 in columns $m+1$ from row $z$ to the top.
To obtain column $m+1$ of $key(\tilde\beta)$, shift in the column
$m+1$ of $key(\beta)$ all the cells with entries $>k'$ one row up,
and add to the  position left vacant (of row index $t$) a new cell
with entry $k'$. The column $m+1$ of $key(\tilde\alpha)$ is obtained
similarly, by shifting one row up in the  column $m+1$ of
$key(\alpha)$ all the cells with entries $>k$ and  adding a new cell
with entry $k$ in the vacant position.

 Put $p:=\min\{t,z\}$ and $q:=\max\{t,z\}.$ We divide the columns $m+1$
in each pair of tableaux  $key({\beta}),~key(\tilde{\beta})$ and
$key({\alpha}),~key(\tilde{\alpha})$ into three parts: the first,
from row one to row $p-1$; the second, from row $p$ to row $q$; and
the third, from  row $q+1$ to the top row. The first parts of column
$m+1$ of $key(\tilde{\beta})$ and $key(\beta)$ are the same,
equivalently, for  $key(\tilde\alpha)$ and  $key(\alpha).$ The third
part of column $m+1$ of $key(\tilde{\beta})$ consists of  row $q$
plus    the third part of $key(\beta)$, equivalently, for
$key(\tilde\alpha)$ and  $key(\alpha).$ As  the columns $m+1$ of
$key(\beta)$ and $key(\alpha)$ are entrywise comparable, the same
happens to the first and third parts of columns $m+1$ in $key(\tilde{\beta})$
and $key(\tilde{\alpha})$. It remains to analyse the second parts of
the pair $key(\tilde{\beta}),~key(\tilde{\alpha})$ which we split
into two cases according to the relative magnitude of $p$ and $q$.

{\em Case} $1$. $p=t<q=z$. Let $k'<b_t<\cdots< b_{z-1}$ and
$d_t<\cdots< d_{z-1}<k$ be  the entries of the
second parts of columns $m+1$ in   $key(\tilde{\beta})$ and
$key(\tilde{\alpha})$, respectively. By construction $k'<b_t\le d_t<d_{t+1}$,
$b_i<b_{i+1}\le d_{i+1}$, $t<i<z-2$, and $b_{z-1}\le d_{z-1}<k$,
and, therefore, the second parts are also comparable.

{\em Case} $2$. $p=z\le q=t$. In this case,
 the assumption on $k'$  implies that the first $q$ rows of columns $m$ and $m+1$ of $key(\tilde{\beta})$ are equal. On the other hand,
 since column $m$ of $key({\beta})$ is less or equal than column  $m$ of  $key({\alpha})$, which is equal to the column $m$ of $key(\tilde{\alpha})$
  and in turn is less or equal to column $m+1$ of $key(\tilde{\alpha})$, forces by transitivity that the second part of column $m+1$ of $key(\tilde{\beta})$
   is less or equal than the corresponding part of  $key(\tilde{\alpha})$.
\end{proof}
We illustrate the lemma with
\begin{ex} Let $\beta=(3,2^2,1,0^2,1)$,
$\alpha=(2,0,3,0,1,2,1)$, $\tilde\beta=(3,2^3,0^2,1)$, and
$\tilde\alpha=(2,0,3,0,2^2,1)$,
\begin{center}\begin{tikzpicture}[scale=0.45]
  \footnotesize
\draw[line width=1pt] (-4,0) rectangle (-3,5);
 \draw[line width= 1pt] (-3,0)rectangle (-2,3);
  \draw[line width= 1pt] (-2,0)rectangle (-1,1);

\draw[line width= 1pt] (3,0)rectangle (4,5);
 \draw[line width= 1pt] (4,0)rectangle(5,3);
\draw[line width= 1pt] (5,0)rectangle(6,1);

\draw[line width=1pt] (-4,1) -- (-1,1); \draw[line width=1pt] (-4,2)
-- (-2,2); \draw[line width=1pt] (-4,3) -- (-2,3); \draw[line
width=1pt] (-4,4) -- (-3,4);

\draw[line width=1pt] (3,1) -- (6,1); \draw[line width=1pt] (3,2) --
(5,2); \draw[line width=1pt] (3,3) -- (5,3); \draw[line width=1pt]
(3,4) -- (4,4);

  \footnotesize
\node at (-5.8, 3) {$key(\beta)=$}; \node at (-1,3) {$\leq$}; \node
at (1.2, 3) {$key(\alpha)=$};

\node at (-3.5, 0.5) {1};\node at (-3.5, 1.5) {2}; \node at (-3.5,
2.5) {3}; \node at (-3.5, 3.5) {4};\node at (-3.5, 4.5) {7};

\node at (-2.5, 0.5) {1};\node at (-2.5, 1.5) {2}; \node at (-2.5,
2.5) {3};

\node at (-1.5, 0.5) {1};

\node at (3.5, 0.5) {1};\node at (3.5, 1.5) {3}; \node at (3.5, 2.5)
{5}; \node at (3.5, 3.5) {6};\node at (3.5, 4.5) {7};

\node at (4.5, 0.5) {1};\node at (4.5, 1.5) {3}; \node at (4.5, 2.5)
{6};

\node at (5.5, 0.5) {3};

\draw[line width=1pt] (10,0) rectangle (11,5);
 \draw[line width= 1pt] (11,0)rectangle (12,4);
  \draw[line width= 1pt] (12,0)rectangle (13,1);

\draw[line width= 1pt] (17,0)rectangle (18,5);
 \draw[line width= 1pt] (18,0)rectangle(19,4);
\draw[line width= 1pt] (19,0)rectangle(20,1);

\draw[line width=1pt] (10,1) -- (13,1); \draw[line width=1pt] (10,2)
-- (12,2); \draw[line width=1pt] (10,3) -- (12,3); \draw[line
width=1pt] (10,4) -- (11,4);

\draw[line width=1pt] (17,1) -- (20,1); \draw[line width=1pt] (17,2)
-- (19,2); \draw[line width=1pt] (17,3) -- (19,3); \draw[line
width=1pt] (17,4) -- (18,4);

  \footnotesize
\node at (8.2, 3) {$key(\tilde\beta)=$}; \node at (13,3) {$\leq$};
\node at (15.2, 3) {$key(\tilde\alpha)=$};

\node at (10.5, 0.5) {1};\node at (10.5, 1.5) {2}; \node at (10.5,
2.5) {3}; \node at (10.5, 3.5) {4};\node at (10.5, 4.5) {7};

\node at (11.5, 0.5) {1};\node at (11.5, 1.5) {2}; \node at (11.5,
2.5) {3}; \node[color=red] at (11.5, 3.5) {4};

\node at (12.5, 0.5) {1};

\node at (17.5, 0.5) {1};\node at (17.5, 1.5) {3}; \node at (17.5,
2.5) {5}; \node at (17.5, 3.5) {6};\node at (17.5, 4.5) {7};

\node at (18.5, 0.5) {1};\node at (18.5, 1.5) {3}; \node[color=red]
at (18.5, 2.5) {5}; \node at (18.5, 3.5) {6};

\node at (19.5, 0.5) {3};

\end{tikzpicture}.\end{center}
\end{ex}
We are now ready to state and prove the main theorem.
\begin{thm}\label{maint}
Let $w$ be a biword in lexicographic order in the alphabet $[n]$,
and let $\Phi(w)=(F,G).$  For each biletter $\displaystyle{
\binom i j}$ in $w$ one has $i+j\leq n+1$ if and only
if $key(sh(G))\leq key( \omega sh(F)),$ where $\omega$ is the
longest permutation of $\mathfrak{S}_n$.
  Moreover, if the first
row of $w$ is a word in the alphabet $[k],$ with $1\leq k \leq n,$ and  the second row is a word in the alphabet $[m],$ with $1\leq m \leq n,$
the shape of $G$  has the last $n-k$
entries equal to zero, and the shape of $F$ the last $n-m$ entries equal to zero.
\end{thm}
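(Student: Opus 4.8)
The plan is to prove the ``if and only if'' characterization first, then deduce the support statement about the shapes of $F$ and $G$ as a consequence of the insertion procedure. I would begin by recalling the combinatorial meaning of the shapes produced by $\Phi$. By construction, inserting $j_r$ into $F$ makes $F$ grow a cell in some column, and the height $h_r$ at which insertion terminates dictates where $i_r$ is placed in $G$, on top of the leftmost column of height $h_r-1$. The key object to track is therefore, biletter by biletter, the pair of shapes $(sh(F),sh(G))$ and the associated keys. The natural strategy is induction on the number $l$ of biletters, read in reverse order $r=l,l-1,\dots,1$, exactly as $\Phi$ processes them. At each stage the invariant I want to maintain is precisely $key(sh(G))\le key(\omega\, sh(F))$, equivalently (using $evac(key(\gamma))=key(\omega\gamma)$ recorded just before Theorem~\ref{vBru}) that $evac(key(sh(F)))\ge key(sh(G))$.

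First I would establish the base case: the empty pair, where both shapes are zero and the inequality holds trivially. For the inductive step, suppose the invariant holds after processing biletters $r+1,\dots,l$, and we now process $\binom{i_r}{j_r}$ with $i_r+j_r\le n+1$. Inserting $j_r$ into $F$ adds one cell; I would use Remark~\ref{insertion}(2), which via Lemma~\ref{triples} controls the column of termination, to identify exactly which column of $sh(F)$ grows by one and pin down the terminal height $h_r$. Correspondingly $sh(G)$ grows by placing $i_r$ atop the leftmost column of height $h_r-1$. The crucial point is that this simultaneous growth of the two shapes is exactly the situation addressed by Lemma~\ref{propvector}: one box is added to a column of each of two comparable weak compositions, and the lemma guarantees the Bruhat/key inequality is preserved provided the added boxes sit in the correct ``matched'' columns. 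I would verify that the hypothesis $i_r+j_r\le n+1$ is precisely what forces the column where $F$ grows and the column where $G$ grows to be matched under the reversal $\omega$, so that Lemma~\ref{propvector} (applied to $\alpha=sh(G)$, $\beta=\omega\,sh(F)$, or its transpose) applies and the invariant survives. Conversely, if some biletter violates $i_r+j_r\le n+1$, I would show the matched-column condition fails and the inequality $key(sh(G))\le key(\omega\,sh(F))$ is broken, giving the reverse implication; here the monotonicity of the construction ensures a single bad biletter cannot later be repaired.

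The hard part will be the bookkeeping that translates ``$i_r+j_r\le n+1$'' into the precise statement about which columns receive boxes and why that is the matching required by Lemma~\ref{propvector}. One must argue that the terminal height $h_r$ of the $F$-insertion and the column occupied by $i_r$ in $G$ interact with the reversal $\omega$ so that the left-most-entry selection rule in Lemma~\ref{propvector} (choosing $k'$ as the leftmost index of $\beta$ with $\beta_{k'}=\alpha_k$) coincides with the greedy left-most placement dictated by Step~3 of $\Phi$. This is where the constraint $i_r+j_r\le n+1$ is genuinely used rather than merely convenient, and it is the step requiring the most care.

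Finally, the support statement is comparatively direct. If the first row of $w$ is a word over $[k]$, then every recorded entry $i_r\le k$, so in $G$ only basement columns indexed by $1,\dots,k$ ever receive a non-basement cell; hence the last $n-k$ entries of $sh(G)$ are zero. Symmetrically, if the second row is a word over $[m]$, then only the values $1,\dots,m$ are ever inserted into $F$, and by the insertion procedure a cell entered into $F$ can only occupy a column whose index is at most the inserted value's eventual resting column, which is bounded by $m$; thus the last $n-m$ entries of $sh(F)$ vanish. I would state this by tracking that an inserted value $v$ can only create or extend columns indexed by $\le v\le m$, which follows from the column-strict decrease of SSAFs and the basement being the identity $1,\dots,n$.
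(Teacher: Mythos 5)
Your outline of the \emph{only if} direction matches the paper's: induction on the biletters with Lemma~\ref{propvector} preserving the key inequality at each step, plus the observation that the shapes stay in one $\mathfrak{S}_n$-orbit. One correction, though: the hypothesis $i_r+j_r\le n+1$ is \emph{not} what makes Lemma~\ref{propvector} applicable. When the insertion terminates at height $h>1$, the placement of $i_r$ on the leftmost column of height $h-1$ is exactly the matched-column rule of Lemma~\ref{propvector}, and the lemma applies with no use of $i_r+j_r\le n+1$ whatsoever. The hypothesis is used only in the case $h=1$, where the placement rule of $\Phi$ is different ($i_r$ sits on top of basement $i_r$, which need not be the leftmost empty column of $G$), so Lemma~\ref{propvector} does \emph{not} apply; there one must compare the first columns of $key(sh(G'))$ and $key(\omega\,sh(F'))$ directly, and the inequality $i_r\le n+1-j_r$ is precisely what keeps their new bottom entries in order. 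Your proposal conflates these two cases, and as written would either prove too much (no constraint needed) or stall at $h=1$.

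The genuine gap is the converse. You dispose of it with ``the monotonicity of the construction ensures a single bad biletter cannot later be repaired,'' but no such monotonicity exists, and this is where essentially all of the paper's work lies. The paper's own Example~\ref{ex:main} shows that as further biletters are processed the pair $(sh(G),\omega\,sh(F))$ can pass from incomparable to strictly ordered the wrong way; the global Bruhat relation between the two shapes is not preserved in any simple sense by later insertions. What the paper actually proves is a local persistence statement: the first offending biletter $\binom{i_t}{j_t}$ creates a ``problem'' pair $(i_t,\,n+1-j_t)$ with $i_t>n+1-j_t$ sitting in the bottom of the first columns of the two keys, and then a four-claim bookkeeping argument --- tracking the columns $J$ (basement $j_t$) and $I$ (basement $i_t$) and the counts $r_i$, $k_i$ of columns of height at least $i$ to the right of $J$ and to the left of $I$ --- shows that this violating pair always reappears in some row of a pair of homologous columns of $(key(sh(G_d)),key(\omega\,sh(F_d)))$ for every $d\ge t$. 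Without an argument of this kind (or a substitute for it) the ``if'' direction is unproved. The support statement at the end of your proposal is fine and agrees with the paper.
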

\begin{proof}  We  describe
$\Phi(w)=(F,G)$ in terms of a Bruhat relation between the shapes of $F$ and $G$, when the billeters are cells in  a staircase of size $n$.

{\em Only if part.}
 We prove, by induction on the number of biletters of $w$, that if  $\Phi(w)=(F,G)$, where the billeters are cells in  a staircase of size $n$, then $sh(G)\le \omega sh(F)$. If $w$ is the empty word then $F$ and $G$ are the
empty semi-skyline augmented filling,  the shapes are null vectors, and there is nothing to prove. Let
$w^{\prime}=\left(\begin{array}{c cc c }
 i_{p+1}&i_p&\cdots &i_1\\j_{p+1} &j_p&\cdots &j_1\\
 \end{array}\right)$ be a biword in lexicographic order such that $p \geq 0$ and $i_t+j_t\leq n+1$ for all $1\leq t\leq p+1,$ and $w=\left(\begin{tabular}{c c c }
 $i_p\cdots i_1$\\ $j_p\cdots j_1$\\ \end{tabular}\right)$ such that
 $\Phi(w)=(F,G).$ Let $F^{\prime}:=(j_{p+1}\rightarrow F)$ and $h$ the
height of the column in $F^{\prime}$ at which the insertion
procedure terminates. There are two possibilities  for $h$ which the
third step of the algorithm procedure of $\Phi$ requires to
consider.

$\bullet~h=1.$ It means $j_{p+1}$ is sited on the top of the
basement element $j_{p+1}$ in $F$ and therefore $i_{p+1}$ goes to
the top of the basement element  $i_{p+1}$ in $G$. Let $G^{\prime}$
be the semi-skyline augmented filling obtained after placing $i_{p+1}$ in $G.$  See Figure~\ref{h=1}.
\begin{figure}[h!]
$$\begin{tikzpicture}[scale=0.6]
  \footnotesize
\draw[line width=1pt] (3,0) rectangle (4,1); \draw [line width=1pt]
(0,0)--(7,0);

 \draw[line width=1pt]
(10,0)
rectangle (11,1);
\draw[line width=1pt] (8,0) -- (15,0);
 \node at (0.5,-0.5){1};\node at (1.5,-0.5){\dots};\node at
(2.5,-0.5){\dots};\node at (3.5,-0.5){ $j_{p+1}$};\node at (4.5,-0.5){\dots};\node at
(5.5,-0.5){\dots};\node at (6.5,-0.5){n};
\node at (8.5,-0.5){1};\node at (9.5,-0.5){\dots};\node at
(10.5,-0.5){ $i_{p+1}$};\node at (11.5,-0.5){\dots};\node at (12.5,-0.5){\dots};\node
at (13.5,-0.5){\dots};\node at (14.5,-0.5){n};
\node at (3.5,0.5){\tiny $j_{p+1}$};\node at (5.5,0.5){ $\cdots$}; ;\node at (2,0.5){ $\cdots$};\node at (10.5,0.5){\tiny $i_{p+1}$};\node at (12,0.5){ $\cdots$}; \node at (13.5,0.5){ $\cdots$};
\node at (3.5,-1.5){$F'$};\node at (11.5,-1.5){$G'$};
\end{tikzpicture}$$
\caption{The pair $(F',G')$ of SSAFs exhibiting the  columns of height one, with  basements $j_{p+1}$ and $i_{p+1}$, where $i_{p+1}\le n-j_{p+1}+1$. $G'$ is empty to the left of $i_{p+1}$.}
\label{h=1}
\end{figure}
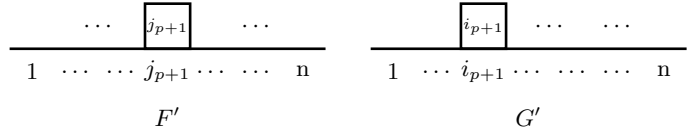

As
$i_{p+1}\leq i_t,$ for all $t,$   $i_{p+1}$ is the bottom entry of
the first column in $key( sh(G^{\prime}))$  whose   remaining entries
constitute the first column of $key(sh(G))$.
Suppose $n+1-j_{p+1}$ is
added to the row $z$ of the first column in $key(\omega sh(F))$ by
shifting all the entries above it
 one row up. Let $i_{p+1}<a_1<\cdots<a_z<a_{z+1}<\cdots <a_l$ be the entries in the first column of  $key
(sh(G^{\prime}))$ and
$b_1<b_2<\cdots< b_{z-1}<n+1-j_{p+1}<b_z<\cdots < b_l$ be  the entries in the first column
 of   $key(\omega sh(F^{\prime}))$,  where
$a_1<\cdots<a_z<\cdots <a_l$ and $b_1<\cdots<b_z<\cdots < b_l$ are
 the entries in the corresponding first columns of  $key(
sh(G))$  and $key(\omega sh(F)).$ If $z=1,$ as $i_{p+1}\leq
n+1-j_{p+1}$ and $a_i\leq b_i$ for all $1\leq i\leq l,$ then $key
(sh(G^{\prime}))\leq key(\omega sh(F^{\prime})).$ If $z>1,$ as
$i_{p+1}<a_1\leq b_1<b_2,$ we have $i_{p+1}\leq b_1$ and $a_1\leq
b_2.$ Similarly $a_i\leq b_i<b_{i+1},$ and $a_i\leq b_{i+1},$ for
all $2\leq i\leq z-2.$ Moreover $a_{z-1}\leq b_{z-1}< n+1-j_{p+1},$
therefore $a_{z-1}\leq n+1-j_{p+1}.$ Also $a_i\leq b_i$ for all
$z\leq i\leq l.$ Hence $key (sh(G^{\prime}))\leq key(\omega
sh(F^{\prime})).$

 $\bullet~h>1.$
 Place $i_{p+1}$ on the top of
the leftmost column of height $h-1$. This means, by
Lemma~\ref{propvector}, $key( sh(G^{\prime}))\leq key( \omega
sh(F^{\prime})).$ See Figure~\ref{fig:h=1}.
\begin{figure}[here]
$$\begin{tikzpicture}[scale=0.56]
  \footnotesize
\draw[line width=1pt] (4.5,0) rectangle (5.5,1); \draw [line width=1pt]
(0,0)--(7,0);
\draw[line width=1pt] (4.5,1) rectangle (5.5,2);
\draw[line width=1pt] (4.5,2) rectangle (5.5,3);
\draw[line width=1pt] (4.5,3) rectangle (5.5,4);

\draw[line width=1pt] (1.5,0) rectangle (2.5,1);
\draw[line width=1pt] (1.5,1) rectangle (2.5,2);

 \draw[line width=1pt](10,0)rectangle (11,1);
  \draw[line width=1pt](10,1)rectangle (11,2);
    \draw[line width=1pt](12,3)rectangle (13,4);

     \draw[line width=1pt](12,0)rectangle (13,1);
  \draw[line width=1pt](12,1)rectangle (13,2);
   \draw[line width=1pt](12,2)rectangle (13,3);
\node at (2,4){}; \node at (10.5,4){ };
\draw[line width=1pt] (8,0) -- (15,0);
 \node at (0.5,-0.5){1};\node at (1,-0.5){\dots};\node at
(2,0.5){ \tiny $j_{p+1}$};
 \node at (2,-0.5){ $j_{p+1}$};\node at (3.5,-0.5){ \dots};\node at (4.5,-0.5){\dots};\node at
(5.5,-0.5){\dots};\node at (6.5,-0.5){n};
\node at (1,1.5){\dots};\node at (3.5,1.5){\dots};\node at (6.5,1.5){\dots};
\node at (8.5,-0.5){1};\node at (9.5,-0.5){\dots};\node at
(12.5,-0.5){ \dots};\node at (11.5,-0.5){\dots};\node at (10.5,-0.5){$i_{p+1}$};\node at (10.5,0.5){\tiny$i_{p+1}$};
\node
at (13.5,-0.5){\dots};\node at (14.5,-0.5){n};
\node at (8.5,1.5){\dots};\node at (11.5,1.5){\dots};\node at (14.5,1.5){\dots};

\node at (5,3.5){\tiny $j_{p+1}$}; \node at (12.5,3.5){\tiny $i_{p+1}$};
\node at (3.5,-1.5){$F'$};\node at (11.5,-1.5){$G'$};
\draw [decorate,decoration={brace,amplitude=5pt}] (4.25,0.1) -- (4.25,3.9);
\node at (3.75,2){$h$};
\draw [decorate,decoration={brace,amplitude=5pt}] (11.9,0.1) -- (11.9,3.9);
\node at (11.4,2){$h$};
\end{tikzpicture}$$
\caption{The pair $(F',G')$ of SSAFs exhibiting the columns of height $h>1$.}
\label{fig:h=1}
\end{figure}
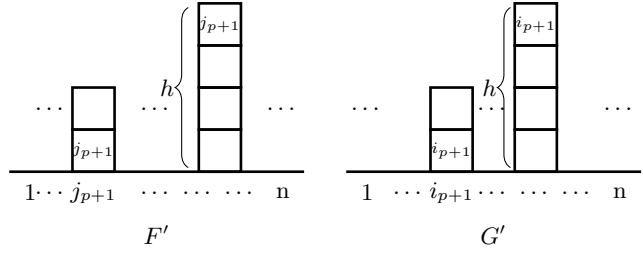


 {\em If part}.
 It is enough to show the following.
\textit{If there exists a biletter
$\binom i j$ 
 in $w$ such that $i+j> n+1,$ then at least one entry of $key(sh(G))$ is strictly bigger than the corresponding entry of $key(\omega sh(F)).$}

This means that if not all biletters satisfy $i+j\le n+1$ then either the shapes $sh(G)$ and $\omega  sh(F)$  are not comparable in the Bruhat order, or $sh(G)>\omega  sh(F)$. (Example~\ref{ex:main}.$2$. and \ref{ex:main}.$3$. show that both situations may happen.) Therefore,  $key(sh(G))\nleq \omega sh(F)$.

\textit{Before delving into the proof,  for the reader's convenience, we give an outline of it}. Let $w=\left(\begin{tabular}{c c c }
 $i_{p}\cdots i_{1}$\\ $ j_{p} \cdots j_{1}$\\ \end{tabular}\right)$, $p\ge 1$,
be a biword  in lexicographic order on the alphabet $[n]$, and
$
 \binom {i_{t}} { j_{t}}$, $t\ge 1$, 
  the  first biletter in $w$, from right to left,
  with $~i_t+j_t> n+1.$ Set $F_0=G_0:=\emptyset$, and for $d\ge 1$, let $(F_d,G_d):=\Phi\left(\begin{tabular}{c ccc}
 $i_{d} \cdots i_1$\\ $ j_{d}\cdots j_1$\\ \end{tabular}\right)$. After inserting  $ j_t$ in $F_{t-1}$ and placing $i_t$ in $G_{t-1}$ where $i_t>n+1-j_t$, it will be easily seen that the letters $i_t$ and $n+1-j_t$ appear as bottom entries in the first column of $key(sh(G_t))$ and in the first column of $key(\omega sh(F_t))$,  respectively. We call to this pair of letters  $(i_t,n+1-j_t)$ where  $i_t>n+1-j_t$ a \textit{problem} in the key-pair $(key(sh(G_{t})), key(\omega sh(F_{t})))$. Further insertions of letters $j_d$ in $F_{d-1}$ and placements of $i_d$ in $G_{d-1}$, for $d>t$,  either corresponding to cells below, or above the
 staircase of size $n$, will not solve the \textit{problem} of a pair of letters  $(i_t,n+1-j_t)$ such that  $i_t>n+1-j_t$,    in some row of homologous columns in the  key-pair $(key(sh(G_{d-1})), key(\omega sh(F_{d-1})))$. The \textit{problem} $i_t>n+1-j_t$ will always appear  in some row of a pair of homologous columns in the key-pair $(key(sh(G_{d})), key(\omega sh(F_{d})))$ for any $d\ge t$. To show this, we keep track of the \textit{problem} in a sequence of four claims with the aim  to \textit{locate} the \textit{problem} at any stage of the insertion. The \textit{locus} of  $i_t>n+1-j_t$ in a row of homologous columns in  the key-pair $(key(sh(G_{d})), key(\omega sh(F_{d})))$ with $d\ge t$, will be called the classification of the \textit{problem}.

We now embark on the details.
First  apply the map $\Phi$ to the biword  $\left(\begin{tabular}{c
c c }
 $i_{t-1}\cdots i_{1}$\\ $ j_{t-1} \cdots j_{1}$\\ \end{tabular}\right)$ to obtain the pair  $(F_{t-1},G_{t-1})$ of SSAFs
  whose right keys satisfy, by the ``only if part" of the theorem,  $key(sh(G_{t-1}))$ $\leq $ $key(\omega sh(F_{t-1})).$
 Now insert $j_t$ to $F_{t-1}.$
As $i_k+j_k\leq n+1,$ for $1\leq k\leq t-1,$ $~i_k+j_k\leq n+1 <
i_t+j_t$, and $i_t\leq i_k,~1\leq k\leq t-1,$ then $j_t> j_k,~1\leq
k\leq t-1$, and, since $w$ is in lexicographic order, this implies
$i_t<i_{t-1}$. Therefore, $j_t$ sits on the top of the basement
element  $j_t$ in $F_{t-1}$ and $i_t$ sits on the top of the
basement element $i_t$ in $G_{t-1}.$ Since the column  with basement $j_t$, in the insertion filling, and the column with basement $i_t$, in the recording filling,  play an important role in what follows they will be denoted by $J$ and $I$, respectively.  See Figure~\ref{fig:onlyif}.
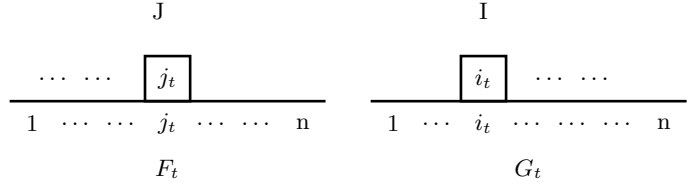
\begin{figure}[here]
\begin{center}\begin{tikzpicture}[scale=0.6]
  \footnotesize
\draw[line width=1pt] (3,0) rectangle (4,1); \draw [line width=1pt]
(0,0)--(7,0);
 \draw[line width=1pt]
(10,0)
rectangle (11,1);
\draw[line width=1pt] (8,0) -- (15,0);
 \node at (0.5,-0.5){1};\node at (1.5,-0.5){\dots};\node at
(2.5,-0.5){\dots};\node at (3.5,-0.5){ $j_{t}$};\node at (4.5,-0.5){\dots};\node at
(5.5,-0.5){\dots};\node at (6.5,-0.5){n};
\node at (8.5,-0.5){1};\node at (9.5,-0.5){\dots};\node at
(10.5,-0.5){ $i_{t}$};\node at (11.5,-0.5){\dots};\node at (12.5,-0.5){\dots};\node
at (13.5,-0.5){\dots};\node at (14.5,-0.5){n};
\node at (3.5,0.5){ $j_{t}$}; \node at (10.5,0.5){ $i_{t}$};
\node at (2,0.5){ $\dots$}; \node at (1,0.5){ $\dots$};
\node at (13,0.5){ $\dots$}; \node at (12,0.5){ $\dots$};
 \node at (3.5,-1.5){$F_t$};\node at (11.5,-1.5){$G_t$};
  \node at (10.5,2){ I};\node at (3.3,2){J};
\end{tikzpicture}\end{center}
\caption{The pair $(F_t,G_t)$ of SSAFs after inserting $j_t$ and placing $i_t$, where $i_t>n+1-j_t$, in $(F_{t-1},G_{t-1})$ . The columns $J$ and $I$, with basements $j_t$ and $i_t$, respectively, are both of height one. The SSAF $F_t$ is empty to the right of the column $J$, and $G_t$ is empty to the left of the column $I$.}
\label{fig:onlyif}
\end{figure}

It means that $n+1-j_t$ is
added to the first row and first column of $key (\omega
sh(F_{t-1}))$ and all entries in this column are shifted one row up.
Similarly, $i_t$ is added to the first row and first column of
$key(sh(G_{t-1}))$, and all the entries in this column are shifted
one row up.  As $i_{t} > n+1- j_{t}$ then  the first column of
$key(sh(G_{t}))$ and the first column of $key(\omega sh(F_{t}))$  are not
entrywise comparable, and we  have a ``{\em problem}" in
the key-pair $(key(sh(G_{t})), key(\omega sh(F_{t})))$.  See Figure~\ref{fig:beforeclaim1}.
\begin{figure}[here]
\begin{center}\begin{tikzpicture}
 \node at (0,0){${\color{red}i_t}$};\node at (2.5,0){${\color{red}n+1-j_t}$};\node at
(0,0.5){$a_1$};\node at (0,1.5){\vdots};
\node at
(2,0.5){$b_1$};\node at (2,1.5){\vdots};
\node at (1.25,0){$>$};\node at (1.25,0.5){$\leq$};\node at (1.25,1.5){$\leq$};
\end{tikzpicture}\end{center}
\caption{The bottom entry  in the first column of $key(sh(G_{t}))$ and the bottom entry  in the first column of $key(\omega sh(F_{t}))$ satisfy $i_t$ $>$ $n+1-j_t$. The key-pair $(key(sh(G_{t})), key(\omega sh(F_{t}))$ is not comparable when $t>1$.}
\label{fig:beforeclaim1}
\end{figure}

From now on, we shall see that the
``{\em problem}"
 $i_{t} > n+1- j_{t}$ remains in some row of a pair of homologous columns in the key-pair $(key(sh(G_{d})), key(\omega sh(F_{d}))),$ with $d\ge t$.
Given $d\ge t$, $J$ still will denote the column with basement $j_t$ in
$F_d$, and  $I$ the column with basement $i_t$ in $G_d$.  Their heights will be denoted by $|J|$
and $|I|$, respectively, and they are $\ge 1$.  For each $i\ge 1$, let
$r_i$  and $k_i$ denote the number of columns of height $\ge i$,  to the right of $J$ and to the left of $I$, respectively.  See Figure~\ref{fig:classproblem}. The fillings (not their basements) of the columns $I$ and $J$ as their heights depend indeed on $d$ but we shall avoid cumbersome notation as long as there is no danger of confusion. However, we put the superscript $d$ and $d+1$ on  $k_i$ and $r_i$ to distinguish between $(F_d,G_d)$ and $(F_{d+1},G_{d+1})$, whenever clarity of presentation makes this necessary.
\begin{figure}[h!]
\begin{center}\begin{tikzpicture}[scale=0.45]
  \footnotesize
\draw[line width=1pt] (3,0) rectangle (4,1);
\draw[line width=1pt] (3,1) rectangle (4,2);\draw[line width=1pt] (3,3) rectangle (4,4);
\node at (3.5,2.75){$\vdots$};
\draw [line width=1pt]
(0,0)--(7,0);
 \draw[line width=1pt](10,0)rectangle (11,1);
 \draw[line width=1pt](10,1)rectangle (11,2);\draw[line width=1pt](10,3)rectangle (11,4);
\node at (10.5,2.75){$\vdots$};
\draw[line width=1pt] (8,0) -- (15,0);
 \node at (0.5,-0.5){1};\node at (1.5,-0.5){\dots};\node at
(2.5,-0.5){\dots};\node at (3.5,-0.5){ $j_{t}$};\node at (3.5,0.5){ $j_{t}$};\node at (4.5,-0.5){\dots};\node at
(5.5,-0.5){\dots};\node at (6.5,-0.5){n};
\node at (8.5,-0.5){1};\node at (9.5,-0.5){\dots};\node at
(10.5,-0.5){ $i_{t}$};\node at
(10.5,0.5){ $i_{t}$};\node at (11.5,-0.5){\dots};\node at (12.5,-0.5){\dots};\node
at (13.5,-0.5){\dots};\node at (14.5,-0.5){n};
\node at (3.5,4.5){ $J$}; \node at (10.5,4.5){ $I$};
\node at (5,2.5){$r_i$}; \node at (5,2){ $\longleftrightarrow$};
\node at (9,2.5){$k_i$}; \node at (9,2){ $\longleftrightarrow$};
 \node at (3.5,-1.5){$F_d$};\node at (11.5,-1.5){$G_d$};
\end{tikzpicture}\end{center}
\caption{The pair of SSAFs $(F_d,G_d)$, $d\ge t$, with the nonempty columns $I$ and $J$. For each $i\ge 1$,  $r_i$ and $k_i$ count the number of columns of height $\ge i$,  to the right of $J$ and to the left of $I$, respectively. If $d=t$, as in Figure~\ref{fig:onlyif}, one has $r_i=k_i=0$ for all $i$.}
\label{fig:classproblem}
\end{figure}
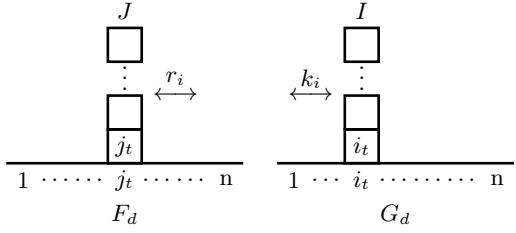

The
classification of the ``\textit{problem}" will
follow from a sequence of four claims as follows.
 The first  is used to prove the second. The second  is used to prove the third. Finally, the last claim complements  the third.

\noindent{\em Claim $1$}: \textit{Let  $(F_d,G_d)$, with $d\ge t$. Then
$k_i\ge r_i\ge 0$, for all $i\ge 1$.}
\begin{proof} By induction on $d\ge t$. For $d=t,$ one has, $k_i=r_i=0,$ for all $i\ge 1$. See Figure~\ref{fig:onlyif}. Let $d\ge t$, and suppose   $(F_d,G_d)$
 satisfies $k_i\ge
r_i\ge 0,$ for all $i\ge 1$. Let us prove for $(F_{d+1},G_{d+1})$.
If the insertion of $j_{d+1}$ terminates in a column of height $l$
to the left or on the top of $J$, then $r_i^{d+1}=r_i^d$, for all $ i$,
$k_i^{d+1}=k_i^d$, for all $i\neq l+1$, and $k_{l+1}^{d+1}=k_{l+1}^d+1$ or
$k_{l+1}^d$. Thus, $k_i^{d+1}\ge r_i^{d+1}$, for all $i\ge 1$. On the other hand,
if the insertion of $j_{d+1}$ terminates to the right of $J$, then
in $F_d$ one has $r_{l}^d>r_{l+1}^d$, and two cases have to be
considered for placing $i_{d+1}$ in $G_d$. First, $i_{d+1}$ sits on
the left of $I$ and, hence, $k_{l+1}^{d+1}=$ $k_{l+1}^d+1$ $\ge r_{l+1}^{d}+1= $
$r_{l+1}^{d+1} $, $k_i^{d+1}= $ $k_i^d\ge $ $r_i^{d}=r_i^{d+1}$, for $i\neq l+1$.
Second, either $i_{d+1}$ sits on the top of $I$ or to the right of
$I$,  in both cases,  $(F_d,G_d)$ satisfy
    $k_{l+1}^d=k_l^d \ge $ $r_l^d
> r_{l+1}^d,$  and, therefore, $k_{l+1}^d > r_{l+1}^d.$ This implies for $(F_{d+1},G_{d+1})$,
$r_{l+1}^{d+1}=r_{l+1}^d+1,$ and  $k_{l+1}^{d+1}= k_{l+1}^d$ $\ge r_{l+1}^{d+1}$,
$k_i^{d+1}=k_i^{d}\ge$ $ r_i^{d}=r_i^{d+1}$, for $i\neq l+1$.
 See Figure~\ref{fig:claim1fig}.
\begin{figure}[h!]
$$\begin{tikzpicture}
\node at (-0.2,0.69){$j_{d+1}$};
\draw [decorate,decoration={brace,amplitude=5pt}] (0.5,0.24) -- (0.5,1.1);
\node at (2.3,1){left or top of $J$ 
};
\node at (2,0.25){right of $J: i_{d+1}$};
\draw [decorate,decoration={brace,amplitude=5pt}] (3.6,-0.1) -- (3.6,0.6);
\node at (5.5,0){top or right of $I$ 
};
\node at (4.7,0.5){left of $I$ 
};
\end{tikzpicture}$$
\caption{All the possible places for terminating the insertion of  $j_{d+1}$ and placing $i_{d+1}$  with respect to the  columns $I$ and $J$.}
\label{fig:claim1fig}
\end{figure}
\end{proof}
 Using Claim $1$, it is shown  next  that the number of columns of height $|I|< i\le|J|$, to the right of $J$  is strictly bigger than the number of columns of height $|I|< i\le|J|$, to the left of $I$, whenever the height of $J$ is strictly bigger than the height of $I$.

\noindent{\em Claim $2$}.  \textit{Let  $(F_d,G_d)$, with $d\ge t$, and
$|J|>|I|$. Then $k_i>r_i\ge 0$, $~i=|I|+1,\ldots,|J|$.}
\begin{proof}
 Since, for $d=t$, it holds $|I|=|J|$, and there is a $d>t$ where for the first time one has  $|J|=|I|+1$.
 We  assume that, for some  $d> t$, one has  $(F_d,G_d)$ with  $ |J|-|I|\ge 1$. Then, either $(F_{d-1},G_{d-1})$ has $|I|=|J|$ or $|J|>|I|$.
In the first case, it means that the insertion of $j_d$  has
terminated on the top of $J$ and the cell $i_d$ sits on the
  left of $I$ on  a column of height $|J|=|I|$. (Otherwise, it would sit on the top of $I$ and again one would have  $(F_d,G_d)$ with new columns $J$ and $I$ satisfying $ |J|=|I|$. Absurd.) Then, by the previous claim,
   $k_{|J|+1}^{d}= k_{|J|+1}^{d-1} + 1 >$ $ r_{|J|+1}^{d-1}= r_{|J|+1}^{d}.$ See Figure~\ref{firstcase}.
\begin{figure}[h!]
$$\begin{tikzpicture}[scale=0.56]
  \footnotesize
\draw[line width=1pt] (4,0) rectangle (5,1); \draw [line width=1pt]
(0,0)--(7,0);
\draw[line width=1pt] (4,1) rectangle (5,2);
\draw[line width=1pt] (4,2) rectangle (5,3);

\draw[line width=1pt] (2,0) rectangle (3,1);
\draw[line width=1pt] (2,1) rectangle (3,2);
\draw[line width=1pt] (2,2) rectangle (3,3);

 \draw[line width=1pt](10,0)rectangle (11,1);
  \draw[line width=1pt](10,1)rectangle (11,2);
   \draw[line width=1pt](10,2)rectangle (11,3);

     \draw[line width=1pt](12,0)rectangle (13,1);
  \draw[line width=1pt](12,1)rectangle (13,2);
   \draw[line width=1pt](12,2)rectangle (13,3);
\node at (2.5,4){ $J$}; \node at (12.5,4){ $I$};
\draw[line width=1pt] (8,0) -- (15,0);
 \node at (0.5,-0.5){1};\node at (1.5,-0.5){\dots};\node at
(2.5,-0.5){ $j_{t}$};\node at (2.5,0.5){ $j_{t}$};
\node at (3.5,-0.5){ \dots};\node at (4.5,-0.5){\dots};\node at
(5.5,-0.5){\dots};\node at (6.5,-0.5){n};
\node at (1,1.5){\dots};\node at (3.5,1.5){\dots};\node at (6.5,1.5){\dots};
\node at (8.5,-0.5){1};\node at (9.5,-0.5){\dots};\node at
(10.5,-0.5){ \dots};\node at (11.5,-0.5){\dots};\node at (12.5,-0.5){$i_{t}$};\node at (12.5,0.5){$i_{t}$};
\node
at (13.5,-0.5){\dots};\node at (14.5,-0.5){n};
\node at (8.5,1.5){\dots};\node at (11.5,1.5){\dots};\node at (14.5,1.5){\dots};

\node at (3.5,-1.5){$F_{d-1}$};\node at (11.5,-1.5){$G_{d-1}$};
\end{tikzpicture}$$
$$\begin{tikzpicture}[scale=0.56]
  \footnotesize
\draw[line width=1pt] (4,0) rectangle (5,1); \draw [line width=1pt]
(0,0)--(7,0);
\draw[line width=1pt] (4,1) rectangle (5,2);
\draw[line width=1pt] (4,2) rectangle (5,3);
\draw[line width=1pt] (2,3) rectangle (3,4);

\draw[line width=1pt] (2,0) rectangle (3,1);
\draw[line width=1pt] (2,1) rectangle (3,2);
\draw[line width=1pt] (2,2) rectangle (3,3);

 \draw[line width=1pt](10,0)rectangle (11,1);
  \draw[line width=1pt](10,1)rectangle (11,2);
   \draw[line width=1pt](10,2)rectangle (11,3);
    \draw[line width=1pt](10,3)rectangle (11,4);

     \draw[line width=1pt](12,0)rectangle (13,1);
  \draw[line width=1pt](12,1)rectangle (13,2);
   \draw[line width=1pt](12,2)rectangle (13,3);
\node at (2.5,5){ $J$}; \node at (12.5,4){ $I$};
\draw[line width=1pt] (8,0) -- (15,0);
 \node at (0.5,-0.5){1};\node at (1.5,-0.5){\dots};\node at
(2.5,-0.5){ $j_{t}$};\node at
(2.5,0.5){ $j_{t}$};
\node at (3.5,-0.5){ \dots};\node at (4.5,-0.5){\dots};\node at
(5.5,-0.5){\dots};\node at (6.5,-0.5){n};
\node at (1,1.5){\dots};\node at (3.5,1.5){\dots};\node at (6.5,1.5){\dots};
\node at (8.5,-0.5){1};\node at (9.5,-0.5){\dots};\node at
(10.5,-0.5){ \dots};\node at (11.5,-0.5){\dots};\node at (12.5,-0.5){$i_{t}$};\node at (12.5,0.5){$i_{t}$};
\node
at (13.5,-0.5){\dots};\node at (14.5,-0.5){n};
\node at (8.5,1.5){\dots};\node at (11.5,1.5){\dots};\node at (14.5,1.5){\dots};

\node at (2.5,3.5){ $j_{d}$}; \node at (10.5,3.5){ $i_{d}$};
\node at (3.5,-1.5){$F_{d}$};\node at (11.5,-1.5){$G_{d}$};
\end{tikzpicture}$$
\caption{The pairs $(F_{d-1},G_{d-1})$,  $(F_{d},G_{d})$ of SSAFs  when $|I|=|J|$ in $(F_{d-1},G_{d-1})$. It holds $r^{d-1}_{|J|+1}=r^d_{|J|+1}$
and $k^{d}_{|J|+1}=k^{d-1}_{|J|+1}+1$.}
\label{firstcase}
\end{figure}
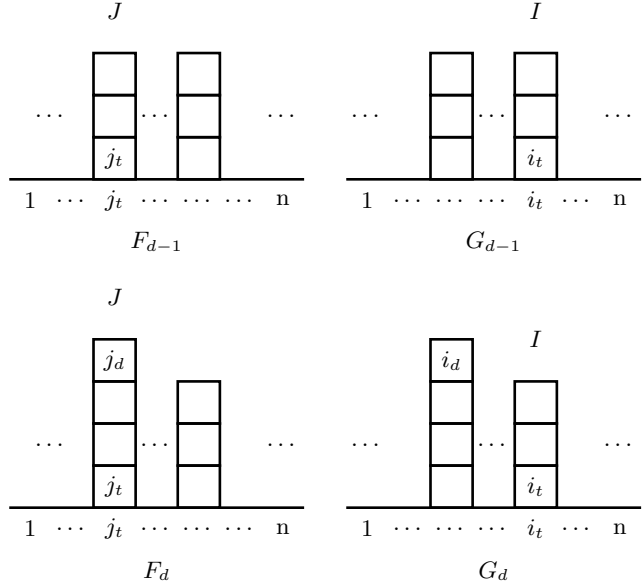

Having in mind the first case,  we suppose in the second that $(F_{d-1},G_{d-1})$  satisfies
$k_i^{d-1}> r_i^{d-1}\ge 0,$ for $i=|I|+1,\dots, |J|$.  Put $z:=|I|<h:=|J|$ for $I$ and $J$ in $(F_{d-1},G_{d-1})$. Let us prove that, if $(F_{d},G_{d})$ has  $|J|>|I|$, then still $k_i^{d}> r_i^{d}\ge 0,$ for $i=|I|+1,\dots, |J|$.
 If the
insertion of $j_d$ terminates in a column of height $l$ ($\neq h-1$)
to the left of $J$ then $r_i^{d}=r_i^{d-1}$, for all $i\ge 1$,
$k_{l+1}^{d}=k_{l+1}^{d-1}+1$, or $k_{l+1}^{d-1}$ and $k_i^{d}=k_i^{d-1}$, for $i\neq l+1$,
and $z\le |I|<|J|=h$, $|J|-|I|\ge 1$. Therefore, $k_i^{d}>r_i^{d}\ge 0$, for
$i=|I|+1,\ldots, |J|$.
If the insertion terminates on the top of
$J$, then $|J|=h+1$, $|I|=z$, $r_i^{d}=r_i^{d-1}$, for all $i\ge 1$,
$k_i^{d}=k_i^{d-1}$, for $i=z+1,\ldots, h$, and $k_{h+1}^{d}=k_{h+1}^{d-1}+1>r_{h+1}^{d}$
or $k_{h+1}^{d}=k_h^{d-1}>r_h^d\ge r_{h+1}^{d}$. Again $k_i^{d}>r_i^{d}$, for
$i=|I|+1,\ldots,|J|=h+1$.
 Finally, if the insertion terminates to
the right of $J$, $|J|=h$ and three cases for the height $l$ have to
be considered. When $l<z$, or $l\ge h$,  $r_i^{d}=r_i^{d-1}<k_i^{d-1}=k_i^{d}$, for
$i=z+1,\ldots, |J|$; when  $l=z$, then either $|I|=z$ and
$k_{z+1}^{d}=k_{z+1}^{d-1}+1>$ $r_{z+1}^{d-1}+1=r_{z+1}^{d}$, $k_i^{d}=k_i^{d-1}>$ $r_i^{d-1}=r_i^{d}$,
$z<i\le |J|$ or $z+1=|I|\le |J|$, and $k_i^{d}=k_i^{d-1}>$ $r_i^{d-1}=r_i^{d}$,
$i=z+2,\ldots, |J|$; and when $z<l< h$, then $|I|=z$ and $r_i^{d}=r_i^{d-1}$,
$i\neq l+1$, and either $k_{l+1}^{d}=k_{l+1}^{d-1}+1>$ $r_{l+1}^{d-1}+1=r_{l+1}^{d}$
or $k_{l+1}^{d}=k_{l+1}^{d-1}=k_l^{d-1}>r_l^{d-1}\ge $ $r_{l+1}^{d-1}+1=r_{l+1}^{d}$. Henceforth $k_i^{d}>r_i^{d}$,
for $i=|I|+1,\ldots,|J|$.
 See Figure~\ref{claim2fig}.
\begin{figure}[h!]
\begin{tikzpicture}
\node at (0,2){$j_{d}$};\node at (-1.4,2){2nd case:};
\draw [decorate,decoration={brace,amplitude=5pt}] (0.7,0.4) -- (0.7,3.1);
\node at (2,3){left of $J: i_{d}$};
\node at (2,2){top of $J: i_{d}$};
\node at (2.1,0.5){right of $J: i_{d}$};
\node at (2,0){height $l$};
\draw [decorate,decoration={brace,amplitude=5pt}] (3.5,2.74) -- (3.5,3.26);
\node at (5.5,3.25){top or right of $I$ 
};
\node at (4.7,2.75){left of $I$ 
};
\draw [decorate,decoration={brace,amplitude=5pt}] (3.5,1.74) -- (3.5,2.26);
\node at (4.7,1.75){left of $I$ 
 };
\node at (4.7,2.25){right of $I$ 
};
\draw [decorate,decoration={brace,amplitude=5pt}] (3.7,-0.6) -- (3.7,1.26);
\node at (5.5,1.25){$l<z$ or $l\ge h$ 
};
\node at (4.3,0.5){$l=z$};
\draw [decorate,decoration={brace,amplitude=5pt}] (5.2,0.24) -- (5.2,0.76);
\node at (6.3,0.75){left of $I$
};
\node at (6.3,0.25){top of $I$
};

\node at (4.6,-0.5){$z<l<h$};
\draw [decorate,decoration={brace,amplitude=5pt}] (5.7,-0.76) -- (5.7,-0.24);
\node at (6.85,-0.25){left of $I$ 
 };
\node at (7,-0.75){right of $I$ 
};
\end{tikzpicture}
\caption{All the  possible places for $i_d$ depending on the terminating place for the insertion of $j_d$ in $(F_{d-1},G_{d-1})$
 under the condition $h=|J|>z=|I|$, and the new columns $J$ and $I$ still satisfy $|J|>|I|$.}
\label{claim2fig}
\end{figure}
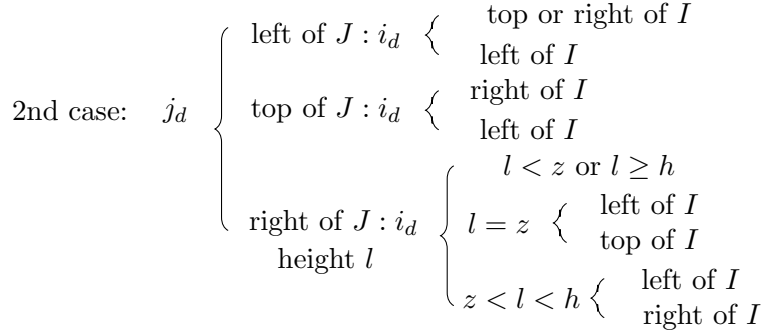
\end{proof}

\noindent{\em Claim $3$}: \textit{Let  $(F_d,G_d)$, with $d\ge t$, be such
that, for some $s\ge 1$,  one has    $|I|, |J|\ge s$ and
$k_s=r_s>0$. Then, for $(F_{d+1},G_{d+1})$ there exists also an
$s\ge 1$ with the same properties.}
\begin{proof}
 Observe  that, from the previous claim, $k_{s+1}^d=r_{s+1}^d$ and $|J|\ge s+1$ only if $|I|\ge s+1$. If the
 insertion of $j_{d+1}$ terminates on the top of a column of height $l\neq s-1$, then still $|I|, |J|\ge s$ and
 $k_s^{d+1}=r_s^{d+1}>0$. It remains to analyse when $l=s-1$ which means that the insertion of $j_{d+1}$ either terminates to the
 left or to the right of $J$. In the first case,  $(F_d,G_d)$ satisfies $|J|\ge s+1$ (using Remark~\ref{insertion}), $r_s^d=r_{s+1}^d$, and, therefore,
  $k_{s+1}^d\ge$ $ r_{s+1}^d=$ $r_s^d= k_s^d\ge$ $ k_{s+1}^d$. It implies for $(F_{d+1},G_{d+1})$ that  $ k_{s+1}^{d+1}= r_{s+1}^{d+1}>0$,
   $|J|,|I|\ge s+1$, and thus the claim is true for $s+1$. In the second case, $(F_d,G_d)$ satisfies
   $k_{s-1}^d\ge r_{s-1}^d>r_s^d=k_s^d$ and thus $k_{s-1}^d>k_s^d$. Thereby the cell $i_{d+1}$ sits to the left of $I$ and
   $r_s^{d+1}=r_s^d+1=k_s^d+1=k_s^{d+1}$, with $|I|,|J|\ge s$. The claim is true for $s$.
 See Figure~\ref{claim3fig}.
\begin{figure}[h!]
$$\begin{tikzpicture}
\node at (-1,2.45){$j_{d+1}$};
\draw [decorate,decoration={brace,amplitude=5pt}] (-0.4,1.74) -- (-0.4,3.1);
\node at (1.4,3){height:  $l= s-1$};
\node at (1.6,1.75){height:  $l\neq s-1$ 
};
\draw [decorate,decoration={brace,amplitude=5pt}] (3.3,2.74) -- (3.3,3.6);
\node at (4.6,3.5){left of $J$ 
};
\node at (4.6,2.75){right of $J$ 
};
\end{tikzpicture}$$
\caption{All the possibilities  for the insertion of $j_{d+1}$ in  $F_{d}$.}
\label{claim3fig}
\end{figure}
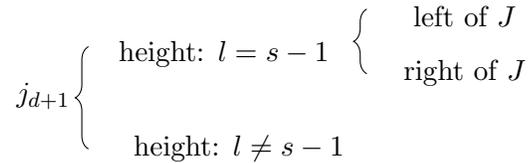
   \end{proof}
\noindent Next claim describes the pair  $(F_d,G_d)$ of SSAFs, for
$d\ge t$, when it does not fit the conditions of Claim $3$.

\noindent {\em Claim $4$}. \textit{Let $(F_d,G_d)$, with $d\ge t$, be  a
pair of SSAFs such that, for all  $i=1,$ $\ldots, $ $
\min\{|I|,|J|\}$, $k_i=r_i>0$ never holds. Then, $|J|\le |I|$ and,
there is $1\le f\le |J|$, such that $k_i>r_i$, for  $1\le i<f$, and
$k_i=r_i=0$, for $i\ge f$.}
\begin{proof} We show by induction on $d\ge t$ that   $(F_d,G_d)$ either satisfy  the conditions of the Claim $3$ or,
otherwise, $|J|\le |I|$ and, there is $1\le f\le |J|$, such that
$r_i<k_i$, for  $1\le i<f$, and $k_i=r_i=0$, for  $i\ge f$.
For
$d=t,$ we have $|I|=|J|=1,$ and $k_i^d=r_i^d=0,$ $i\ge 1$. Put $f:=1.$
Let $(F_d,G_d),$ with $d\ge t$. If  $(F_d,G_d) $ fits the conditions
of Claim $3$, then $(F_{d+1}, G_{d+1})$ does it as well.
Otherwise, assume  for $(F_d,G_d)$, $|J|\le |I|$, and, there exists
$1\le f\le |J|$, such that $r_i^d<k_i^d$, for  $1\le i<f$, and
$k_i^d=r_i^d=0$, for $i\ge f$. We show next that $(F_{d+1}, G_{d+1})$
either fits the conditions of the previous Claim $3$, or,
otherwise, it is as described in the present claim. If the insertion
of $j_{d+1}$ terminates to the left  of $J$, and $i_{d+1}$ sits on
the top or to the right of $I$, still $|I|\ge |J|$ and there is
nothing to prove. If $i_{d+1}$ sits on the top of a column of height
$l$, to the left of $I$, then, since $k_f^{d}=0$, one has $l<f$, and two
cases have to be considered. When $l=f-1$, it implies $|I|\ge|J|\ge
f+1$ (using Remark~\ref{insertion}), $r_f^{d+1}=0$ and $k_f^{d+1}=1$, and $(F_{d+1}, G_{d+1})$ satisfies the
claim for $f+1$; in the case of $l<f-1$,
$r_{l+1}^{d+1}=r_{l+1}^{d}<k_{l+1}^d+1=k_{l+1}^{d+1}$ and still, for the same $f$, $k_i^{d+1}>r_i^{d+1}$,
$1\le i<f$, $k_i^{d+1}=r_i^{d+1}=0$, $i\ge f$. If the insertion of $j_{d+1}$
terminates on the top of $J$, since $k_{|J|}^d=0$ and $|I|\ge |J|$,
then  $i_{d+1}$ either sits on the top of $I$ when $|I|=|J|$, and
still for the same $f$, $k_i^{d+1}>r_i^{d+1}$, $1\le i<f$, $k_i^{d+1}=r_i^{d+1}=0$, $i\ge
f$, or sits to the right of $I$, when $|I|>|J|$, and still $|I|\ge
|J|+1$, and, for the same $f$, $k_i^{d+1}>r_i^{d+1}$, $1\le i<f$, $k_i^{d+1}=r_i^{d+1}=0$,
$i\ge f$. If the insertion of $j_{d+1}$ terminates to the right of
$J$ on the top of a column of height $l<f$ (recall that $r_f^{d}=0$),
then, since $|I|> f$, $i_{d+1}$ either sits on the left of $I$ or to
the right of $I$. In the first case, if $l=f-1$, one has
$r_f^{d+1}=r_f^{d}+1=k_f^{d}+1=k_f^{d+1}=1,$  and, therefore, we are in the conditions
of Claim $3$, with   $s=f< |J|\le |I|$; if $l<f-1$, still
$r_{l+1}^{d+1}=r_{l+1}^{d}+1<k_{l+1}^{d}+1=k_{l+1}^{d+1},$ so $k_i^{d+1}> r_i^{d+1},$ for $1\le
i<f$ and $r_i^{d+1}=k_i^{d+1}=0$, for $i\ge f$. In the second case, it means
$k_{l+1}^{d+1}=k_{l+1}^{d}=k_l^{d}>r_l^{d}\ge r_{l+1}^{d}+1= r_{l+1}^{d+1}$ and hence
$k_{l+1}^{d+1}>r_{l+1}^{d+1}$, with $l+1<f$.  Note that $k_f=0$ and $k_{f-1}>r_{f-1}$, so $k_{f-1}\neq 0$ therefore $l\neq f-1$. Similarly, $k_i^{d+1}>r_i^{d+1}$,
for $1\le i<f$ and $k_i^{d+1}=r_i^{d+1}=0$, $i\ge f$.
 See Figure~\ref{claim4fig}.
\begin{figure}[h!]
\begin{tikzpicture}
\node at (-1,1.75){$j_{d+1}$};
\draw [decorate,decoration={brace,amplitude=5pt}] (-0.4,0.4) -- (-0.4,3.1);
\node at (1,3){left of $J: i_{d+1}$};
\node at (1,1.75){top of $J: i_{d+1}$};
\node at (1,0.5){right of $J: i_{d+1}$};
\draw [decorate,decoration={brace,amplitude=5pt}] (2.5,2.74) -- (2.5,3.6);
\node at (4.6,3.5){top or right of $I$ 
};
\node at (4.6,2.75){left of $I$ with height $l$:};
\draw [decorate,decoration={brace,amplitude=5pt}] (6.9,2.4) -- (6.9,3.1);
\node at (8.1,3){$l=f-1$ 
 };
\node at (8.1,2.5){$l<f-1$ 
};
\draw [decorate,decoration={brace,amplitude=5pt}] (2.5,1.4) -- (2.5,2.1);
\node at (3.8,2){top of $I$ 
 };
\node at (3.8,1.5){right of $I$ 
};
\draw [decorate,decoration={brace,amplitude=5pt}] (2.6,-0.1) -- (2.6,0.76);
\node at (4,0){right of $I$ 
 };
\node at (4.6,0.75){left of $I$ with height $l$:};
\draw [decorate,decoration={brace,amplitude=5pt}] (6.9,0.4) -- (6.9,1.1);
\node at (8.1,1){$l=f-1$ 
};
\node at (8.1,0.5){$l<f-1$ 
};
\end{tikzpicture}
\caption{All the possibilities for  inserting $j_{d+1}$ in $F_d$ and placing $i_{d+1}$  in $G_d$.}
\label{claim4fig}
\end{figure}
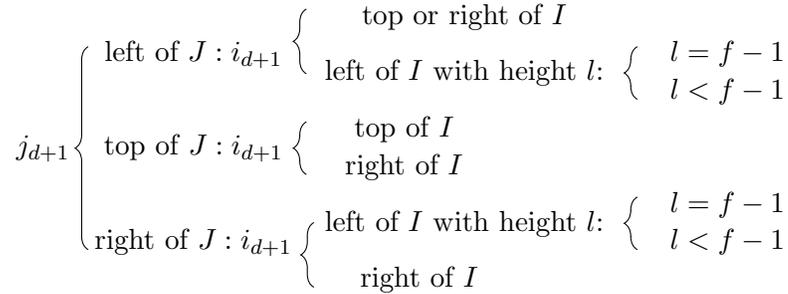
\end{proof}
Recall that for any $d\ge t$, $i_t$ appears in the $|I|$ first columns of the $key(sh(G_{d}))$ and $j_t$ appears in the $|J|$ first columns of the $key(\omega sh(F_{d}))$.

\noindent{\em
Classification of the ``problem"}:
For each $d\ge t$, either  there
exists $s\ge 1$ such that $|J|,|I|\ge s$, $r_s=k_s>0$; or
$1\le|J|\le |I|$, and there exists $1\le f\le |J|$, such that
$k_i>r_i$, for  $1\le i< f$, and $k_i=r_i=0$, for $i\ge f$. In the
first case,  one has a ``\textit{problem}" in the $(r_s+1)^{th}$ rows of the
$s^{th}$ columns in the key-pair $(key(sh(G_{d})), key(\omega
sh(F_{d})))$. In the second case, one has a ``\textit{problem}'' in the bottom of
the $|J|^{th}$ columns. See Example~\ref{ex:main}.2. and Figure~\ref{classi2}.
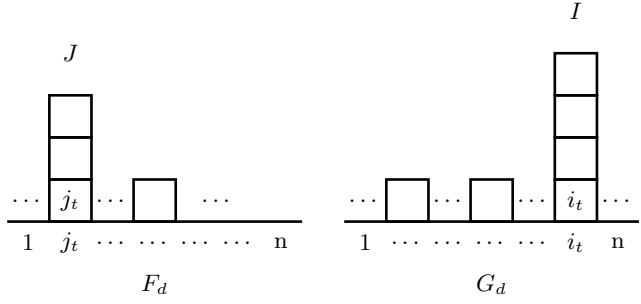
\begin{figure}[h!]
$$\begin{tikzpicture}[scale=0.56]
  \footnotesize
\draw[line width=1pt] (3,0) rectangle (4,1); \draw [line width=1pt]
(0,0)--(7,0);
\draw[line width=1pt] (1,0) rectangle (2,1);
\draw[line width=1pt] (1,1) rectangle (2,2);
\draw[line width=1pt] (1,2) rectangle (2,3);
 \draw[line width=1pt](9,0)rectangle (10,1);
 \draw[line width=1pt](11,0)rectangle (12,1);
    \draw[line width=1pt](13,3)rectangle (14,4);
     \draw[line width=1pt](13,0)rectangle (14,1);
  \draw[line width=1pt](13,1)rectangle (14,2);
   \draw[line width=1pt](13,2)rectangle (14,3);
\node at (1.5,4){ $J$}; \node at (13.5,5){ $I$};
\draw[line width=1pt] (8,0) -- (15,0);
 \node at (0.5,-0.5){1};\node at (1.5,-0.5){$j_{t}$};\node at (1.5,0.5){$j_{t}$};
 \node at
(2.5,-0.5){\dots };\node at (3.5,-0.5){ \dots};\node at (4.5,-0.5){\dots};\node at
(5.5,-0.5){\dots};\node at (6.5,-0.5){n};
\node at (0.5,0.5){\dots};\node at (2.5,0.5){\dots};\node at (5,0.5){\dots};
\node at (8.5,-0.5){1};\node at (9.5,-0.5){\dots};\node at
(10.5,-0.5){ \dots};\node at (11.5,-0.5){\dots};\node at (13.5,-0.5){$i_{t}$};\node at (13.5,0.5){$i_{t}$};
\node
at (12.5,-0.5){\dots};\node at (14.5,-0.5){n};
\node at (8.5,0.5){\dots};\node at (10.5,0.5){\dots};\node at (12.5,0.5){\dots};\node at (14.5,0.5){\dots};
\node at (3.5,-1.5){$F_{d}$};\node at (11.5,-1.5){$G_{d}$};
\end{tikzpicture}$$
\caption{$1\le|J|=3\le |I|=4$, and there exists $1\le f=2\le |J|=3$, such that
$k_i>r_i$, for  $1\le i< f$, and $k_i=r_i=0$, for $i\ge f$. The entry at the bottom of the 3th column of $key(\omega sh(F_d))$ is $ n-j_t+1$ while the corresponding entry in the 3th column of $key(sh(G_d))$ is equal to $i_{t}$ and $n-j_t+1<i_t$.}
\label{classi2}
\end{figure}

Finally, if the second row of $w$ is over the alphabet
$[m],$ there
 is no cell on the top of the basement of  $F$ greater than $m.$
 Therefore, the shape of $F$ has the last $n-m$ entries equal to
 zero and thus its decreasing rearrangement is a partition of length $\le m$. Using the symmetry of $\Phi$, the other case is similar.
\end{proof}
\begin{re}\label{remark2}
1. Given $\nu\in
\mathbb{N}^n$ and $\beta\in\mathbb{N}^n$ such that $\beta\le \omega \nu$, recalling the definition of key SSAF, \eqref{existence-SSAF}, there exists always  a pair
$(F,G)$ of SSAFs with shapes $\nu$ and $\beta$ respectively. For instance, the corresponding key SSAF pair.

2. If   the rows in $ w$ are swapped and  rearranged in lexicographic order, one obtains  the
biword $\tilde w$ such that $\Phi(\tilde w)=(G,F)$ with
$key(sh(F))\le key(\omega sh(G))$.

3. If $k+m\le n+1$, the biletters of $w$ are cells of a rectangle $(m^k)$ inside the staircase of length $n$, and  $sh(G)\le \omega sh(F)$ is  trivially satisfied. Notice that  $key(sh(G))$ has entries $\le k$ while $key(\omega sh(F))$ has entries $\ge n-m+1\ge k$.

4. Example~\ref{ex:main}.$2$., below, shows that if $w$ consists both of biletters  above and inside the staircase of size $n$, then we may have either $sh(G)$ and $\omega sh(F)$ not comparable or  $sh(G)>\omega sh(F)$.
\end{re}
Using the  bijection $\Psi$ between $SSYT_n$ and $SSAF_n$,
one has,
\begin{cor}
Let $w$ be a biword in lexicographic order in the alphabet  $[n]$,
and let $w$ $\underrightarrow{RSK}$ $ (P,Q).$
 For each biletter $
 \binom i j
 $ in $w$ we have $1\le j\le m\le n$, $1\le i\le k\le n$, and $i+j\leq n+1$ if and only
if $Q$ has entries $\le k$, $ P$ has entries $\le m$, and $K_+(Q)\leq evac(K_+(P))$.
\end{cor}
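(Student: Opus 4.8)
The plan is to read this corollary off Theorem~\ref{maint} by transporting everything through the bijection $\Psi$ and the commutation recorded in Corollary~\ref{colSAFSSYT}. I would first observe that the asserted biconditional is a conjunction of three conditions on each side, and that it suffices to prove the three component equivalences separately, since $(A_1\wedge A_2\wedge A_3)\Leftrightarrow(B_1\wedge B_2\wedge B_3)$ holds as soon as $A_r\Leftrightarrow B_r$ for each $r$. Thus I would split off the two ``alphabet'' conditions from the Bruhat condition and handle each piece with the appropriate tool.

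The two alphabet equivalences are pure RSK bookkeeping. Writing $(P,Q)$ for the RSK output of $w$, the insertion tableau $P$ is built from the bottom row of $w$, so its multiset of entries is exactly $\{j_1,\dots,j_l\}$; dually the recording tableau $Q$ has entry multiset $\{i_1,\dots,i_l\}$. (This is also visible through $\Psi$: by Corollary~\ref{colSAFSSYT} one has $c(P)=c(F)$ and $c(Q)=c(G)$, the contents of the second and first rows of $w$, respectively.) Hence ``every biletter has $j\le m$'' is equivalent to ``$P$ has entries $\le m$'', and ``every biletter has $i\le k$'' is equivalent to ``$Q$ has entries $\le k$'', with no further argument.

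For the remaining equivalence I would set $\Phi(w)=(F,G)$ and invoke Theorem~\ref{maint}, by which ``$i_r+j_r\le n+1$ for every biletter'' is equivalent to $key(sh(G))\le key(\omega\,sh(F))$. It then remains only to rewrite both keys in tableau terms. Corollary~\ref{colSAFSSYT} gives $\Psi(P)=F$ and $\Psi(Q)=G$, whence, by Theorem~\ref{SAFSSYT}, $K_+(Q)=key(sh(G))$ and $K_+(P)=key(sh(F))$. Applying the evacuation identity $evac(key(\gamma))=key(\omega\gamma)$ recorded in Section~\ref{sec:weak} (and used in Theorem~\ref{vBru}) to $\gamma=sh(F)$ yields $key(\omega\,sh(F))=evac(K_+(P))$. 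Chaining these identities turns $key(sh(G))\le key(\omega\,sh(F))$ into $K_+(Q)\le evac(K_+(P))$, giving the third equivalence.

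The only point that needs care — the genuine content beyond Theorem~\ref{maint} — is to ensure that $K_+(Q)\le evac(K_+(P))$ is a \emph{bona fide} entrywise comparison of keys of the same shape, so that it agrees with the Bruhat relation of Theorem~\ref{vBru}. Here I would use that $sh(F)$ and $sh(G)$ are rearrangements of one another (Remark~\ref{insertion}), so $sh(F)^+=sh(G)^+$ and $\omega\,sh(F)$ lies in the same $\mathfrak{S}_n$-orbit as $sh(G)$; consequently $key(\omega\,sh(F))$ and $key(sh(G))$ share a common shape and the inequality is meaningful throughout. With the orbit thus fixed, the three equivalences conjoin to the stated biconditional, completing the proof.
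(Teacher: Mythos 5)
Your proposal is correct and follows essentially the same route as the paper, which states this corollary as an immediate translation of Theorem~\ref{maint} through the bijection $\Psi$ and the commutation $(\Psi(P),\Psi(Q))=\Phi(w)$ of Corollary~\ref{colSAFSSYT}, using $K_+(Q)=key(sh(G))$, $K_+(P)=key(sh(F))$ and $evac(key(\gamma))=key(\omega\gamma)$. The decomposition into three independent equivalences and the content bookkeeping for the alphabet conditions are exactly the intended (implicit) argument.
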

Two examples are given to illustrate Theorem~\ref{maint}.
\begin{ex}\label{ex:main}
{\em
\begin{enumerate}
\item Given $w=\left(\begin{array}{ccccccc} 4&6&6&7\\
4&1&2&1\end{array}\right),$   $\Phi(w)$ and its key-pair satisfying
$key(sh(G))\le key(\omega sh(F))$ are calculated.
 $$\begin{tikzpicture}[scale=0.43]
  \footnotesize
\draw[line width=1pt] (0,0) rectangle (1,1); \draw [line width=1pt]
(0,0)--(7,0);

 \draw[line width=1pt]
(14,0)
rectangle (15,1);
\draw[line width=1pt] (8,0) -- (15,0);

 \node at (0.5,-0.5){1};\node at (1.5,-0.5){2};\node at
(2.5,-0.5){3};\node at (3.5,-0.5){4};\node at (4.5,-0.5){5};\node at
(5.5,-0.5){6};\node at (6.5,-0.5){7};

\node at (8.5,-0.5){1};\node at (9.5,-0.5){2};\node at
(10.5,-0.5){3};\node at (11.5,-0.5){4};\node at (12.5,-0.5){5};\node
at (13.5,-0.5){6};\node at (14.5,-0.5){7};

\node at (0.5,0.5){1}; \node at (14.5,0.5){7};

 \node at (3.5,-1.5){$sh(F_1)=(1,0^{6})$};\node at (11.5,-1.5){$sh(G_1)=(0^{6},1)$};\node at
 (8,-2.5){$key(sh(G_1))=7=key(\omega sh(F_1))$};

  \footnotesize
\draw[line width=1pt] (16,0) rectangle (17,1); \draw[line width=1pt]
(17,0) rectangle (18,1);\draw[line width=1pt] (29,0) rectangle
(30,1); \draw [line width=1pt] (16,0)--(23,0);

 \draw[line width=1pt]
(30,0)
rectangle (31,1);
\draw[line width=1pt] (24,0) -- (31,0);

 \node at (16.5,-0.5){1};\node at (17.5,-0.5){2};\node at
(18.5,-0.5){3};\node at (19.5,-0.5){4};\node at (20.5,-0.5){5};\node
at (21.5,-0.5){6};\node at (22.5,-0.5){7};

\node at (24.5,-0.5){1};\node at (25.5,-0.5){2};\node at
(26.5,-0.5){3};\node at (27.5,-0.5){4};\node at (28.5,-0.5){5};\node
at (29.5,-0.5){6};\node at (30.5,-0.5){7};

\node at (16.5,0.5){1};\node at (17.5,0.5){2}; \node at
(30.5,0.5){7};\node at (29.5,0.5){6};

 \node at (19.5,-1.5){$sh(F_2)=(1^2,0^{5})$};\node at (27.5,-1.5){$sh(G_2)=(0^{5},1^2)$};\node at
 (24,-2.5){$key(sh(G_2))=67=key(\omega sh(F_2))$};

 \node at (7.5,0){,}; \node at (15.5,0){;}; \node at (23.5,0){,};
\end{tikzpicture}$$

\begin{center}\begin{tikzpicture}[scale=0.43]
  \footnotesize
   \node at (7.5,0){,}; \node at (15.5,0){;}; \node at (23.5,0){,};
\draw[line width=1pt] (0,0) rectangle (1,1); \draw[line width=1pt]
(1,0) rectangle (2,1); \draw[line width=1pt] (0,1) rectangle
(1,2);\draw [line width=1pt] (0,0)--(7,0);\draw[line width=1pt]
(13,1) rectangle (14,2);\draw[line width=1pt] (13,0) rectangle
(14,1); \draw[line width=1pt] (14,0) rectangle (15,1); \draw[line
width=1pt] (8,0) -- (15,0);

 \node at (0.5,-0.5){1};\node at (1.5,-0.5){2};\node at
(2.5,-0.5){3};\node at (3.5,-0.5){4};\node at (4.5,-0.5){5};\node at
(5.5,-0.5){6};\node at (6.5,-0.5){7};

\node at (8.5,-0.5){1};\node at (9.5,-0.5){2};\node at
(10.5,-0.5){3};\node at (11.5,-0.5){4};\node at (12.5,-0.5){5};\node
at (13.5,-0.5){6};\node at (14.5,-0.5){7};

\node at (0.5,0.5){1};\node at (1.5,0.5){2}; \node at
(14.5,0.5){7};\node at (13.5,0.5){6};\node at (0.5,1.5){1};\node at
(13.5,1.5){6};

 \node at (3.5,-1.5){$sh(F_3)=(2,1,0^{5})$};\node at (10.95,-1.5){$sh(G_3)=(0^{5},2,1)$};\node at
 (9,-3){$key(sh(G_3))=\begin{array}{cc}7\\6&6\end{array}\leq \begin{array}{cc}7\\6&7\end{array}=key(\omega sh(F_3))$};

  \footnotesize
\draw[line width=1pt] (16,0) rectangle (17,1); \draw[line width=1pt]
(17,0) rectangle (18,1); \draw[line width=1pt] (16,1) rectangle
(17,2);\draw [line width=1pt] (16,0)--(23,0);\draw[line width=1pt]
(29,1) rectangle (30,2);\draw[line width=1pt] (29,0) rectangle
(30,1); \draw[line width=1pt] (30,0) rectangle (31,1); \draw[line
width=1pt] (24,0) -- (31,0);\draw[line width=1pt] (19,0) rectangle
(20,1);\draw[line width=1pt] (27,0) rectangle (28,1);

 \node at (16.5,-0.5){1};\node at (17.5,-0.5){2};\node at
(18.5,-0.5){3};\node at (19.5,-0.5){4};\node at (20.5,-0.5){5};\node
at (21.5,-0.5){6};\node at (22.5,-0.5){7};

\node at (24.5,-0.5){1};\node at (25.5,-0.5){2};\node at
(26.5,-0.5){3};\node at (27.5,-0.5){4};\node at (28.5,-0.5){5};\node
at (29.5,-0.5){6};\node at (30.5,-0.5){7};

\node at (16.5,0.5){1};\node at (17.5,0.5){2}; \node at
(30.5,0.5){7};\node at (29.5,0.5){6};\node at (16.5,1.5){1};\node at
(29.5,1.5){6};\node at (19.5,0.5){4};\node at (27.5,0.5){4};

 \node at (18.9,-1.5){$sh(F_4)=(2,1,0,1,0^{3})$};\node at (27.5,-1.5){$sh(G_4)=(0^{3},1,0,2,1)$};\node at
 (22.5,-4){$key(sh(G_4))=\begin{array}{cc}7\\6\\4&6\end{array}\leq \begin{array}{cc}7\\6\\4&7\end{array}=key(\omega sh(F_4))$};

\end{tikzpicture}\end{center}

\item
Let $w=\left(\begin{array}{ccccccc} 1&2&3&3&{\color{red}5}&6\\
6&3&2&4&{\color{red}3}&1\end{array}\right),$  $n=6$, $i_2=5 >6+1-3=6+1-j_2=4$. We
calculate $\Phi(w)$ whose  key-pair $(key(sh(G))\nleq key(\omega sh(F)))$.
\begin{center}\begin{tikzpicture}[scale=0.46]
 \footnotesize
\draw[line width=1pt] (0,0) rectangle (1,1); \draw [line width=1pt]
(0,0)--(6,0);

 \draw[line width=1pt]
(12,0) rectangle (13,1); \draw[line width=1pt] (7,0) -- (13,0);

 \node at (0.5,-0.5){1};\node at (1.5,-0.5){2};\node at
(2.5,-0.5){3};\node at (3.5,-0.5){4};\node at (4.5,-0.5){5};\node at
(5.5,-0.5){6};

\node at (7.5,-0.5){1};\node at (8.5,-0.5){2};\node at
(9.5,-0.5){3};\node at (10.5,-0.5){4};\node at (11.5,-0.5){5};\node
at (12.5,-0.5){6};

\node at (0.5,0.5){1};\node at (12.5,0.5){6};

 \node at (3,-1.5){$sh(F_1)=(1,0^{5})$};\node at (10,-1.5){$sh(G_1)=(0^{5},1)$};\node at
 (5.5,-3){$key(sh(G_1))=6=key(\omega sh(F_1))$};

  \footnotesize
\draw[line width=1pt] (15,0) rectangle (16,1); \draw[line width=1pt]
(17,0) rectangle (18,1);\draw[line width=1pt] (26,0) rectangle
(27,1); \draw [line width=1pt] (15,0)--(21,0);

 \draw[line width=1pt]
(27,0)
rectangle (28,1); 
\draw[line width=1pt] (22,0) -- (28,0);

 \node at (15.5,-0.5){1};\node at (16.5,-0.5){2};\node at
(17.5,-0.5){3};\node at (18.5,-0.5){4};\node at (19.5,-0.5){5};\node
at (20.5,-0.5){6};

\node at (22.5,-0.5){1};\node at (23.5,-0.5){2};\node at
(24.5,-0.5){3};\node at (25.5,-0.5){4};\node at (26.5,-0.5){5};\node
at (27.5,-0.5){6};

\node at (15.5,0.5){1};\node at (17.5,0.5){3}; \node at
(27.5,0.5){6};\node at (26.5,0.5){5};

 \node at (17.5,-1.5){$sh(F_2)=(1,0,1,0^{3})$};\node at (24.5,-1.5){$sh(G_2)=(0^{4},1^2)$};\node at
 (20.5,-3){$key(sh(G_2))=\begin{array}{c}6\\{\color{red}5}\end{array}> \begin{array}{c}6\\{\color{red}4}\end{array}=key(\omega sh(F_2))$};
 \node at(6.5,0){,}; \node at (14,0){;}; \node at (21.5,0){,};
\end{tikzpicture}\end{center}
\begin{center}\begin{tikzpicture}[scale=0.46]
  \footnotesize
\draw[line width=1pt] (0,0) rectangle (1,1); \draw[line width=1pt]
(2,0) rectangle (3,1); \draw[line width=1pt] (3,0) rectangle
(4,1);\draw [line width=1pt] (0,0)--(6,0);\draw[line width=1pt]
(9,0) rectangle (10,1);\draw[line width=1pt] (12,0) rectangle
(13,1);\draw[line width=1pt] (11,0) rectangle (12,1); \draw[line
width=1pt] (7,0) -- (13,0);

 \node at (0.5,-0.5){1};\node at (1.5,-0.5){2};\node at
(2.5,-0.5){3};\node at (3.5,-0.5){4};\node at (4.5,-0.5){5};\node at
(5.5,-0.5){6};

\node at (7.5,-0.5){1};\node at (8.5,-0.5){2};\node at
(9.5,-0.5){3};\node at (10.5,-0.5){4};\node at (11.5,-0.5){5};\node
at (12.5,-0.5){6};

\node at (0.5,0.5){1};\node at (2.5,0.5){3}; \node at
(12.5,0.5){6};\node at (9.5,0.5){3};\node at (3.5,0.5){4};\node at
(11.5,0.5){5};

 \node at (2.7,-1.5){\scriptsize$sh(F_3)=(1,0,1^2,0^{2})$};\node at (10.1,-1.5){\scriptsize$sh(G_3)=(0^{2},1,0,1^2)$};\node at
 (7.5,-3.7){$key(sh(G_3))=\begin{array}{c}6\\{\color{red}5}\\3\end{array}>
  \begin{array}{cc}6\\{\color{red}4}\\3\end{array}=key(\omega sh(F_3))$};

  \footnotesize

\draw[line width=1pt] (15,0) -- (21,0);

\draw[line width=1pt] (15,0) rectangle (16,1);\draw[line width=1pt]
(17,0) rectangle (18,1);\draw[line width=1pt] (18,0) rectangle
(19,1);\draw[line width=1pt] (17,1) rectangle (18,2);

\node at (15.5,-0.5){1};\node at (16.5,-0.5){2}; \node at
(17.5,-0.5){3}; \node at (18.5,-0.5){4}; \node at (19.5,-0.5){5};
\node at (20.5,-0.5){6};

\draw[line width=1pt] (22,0) -- (28,0);

\draw[line width=1pt] (24,0) rectangle (25,1);\draw[line width=1pt]
 (26,0) rectangle (27,1); \draw[line width=1pt](27,0) rectangle
(28,1); \draw[line width=1pt]  (24,1) rectangle (25,2);

\node at (22.5,-0.5){1};\node at (23.5,-0.5){2}; \node at
(24.5,-0.5){3}; \node at (25.5,-0.5){4}; \node at (26.5,-0.5){5};
\node at (27.5,-0.5){6};

\node at (15.5,0.5){1};\node at (17.5,0.5){3}; \node at
(17.5,1.5){2}; \node at (18.5,0.5){4};

\node at (24.5,0.5){3}; \node at (24.5,1.5){3}; \node at
(26.5,0.5){5}; \node at (27.5,0.5){6};

 \node at (17.8,-1.5){\scriptsize$sh(F_4)=(1,0,2,1,0^{2})$};\node at (25.4,-1.5){\scriptsize$sh(G_4)=(0^{2},2,0,1^2)$};\node at
 (20,-4.5){$key(sh(G_4))=\begin{array}{cc}6\\{\color{red}5}\\3&3\end{array}\ngtr
 \begin{array}{cc}6\\{\color{red}4}\\3&4\end{array}=key(\omega sh(F_4))$};
  \node at(6.5,0){,}; \node at (14,0){;}; \node at (21.5,0){,};
  \node at (19.7,-5.5){$\nleq$};
\end{tikzpicture}\end{center}
\begin{center}\begin{tikzpicture}[scale=0.46]
  \footnotesize
\draw[line width=1pt] (0,0) rectangle (1,1); \draw[line width=1pt]
(2,0) rectangle (3,1); \draw[line width=1pt] (3,0) rectangle
(4,1);\draw[line width=1pt] (2,1) rectangle (3,2); \draw[line
width=1pt] (3,1) rectangle (4,2);\draw [line width=1pt]
(0,0)--(6,0);\draw[line width=1pt] (9,0) rectangle (10,1);\draw[line
width=1pt] (12,0) rectangle (13,1); \draw[line width=1pt] (9,1)
rectangle (10,2);\draw[line width=1pt] (11,1) rectangle (12,2);
\draw[line width=1pt] (11,0) rectangle (12,1); \draw[line width=1pt]
(7,0) -- (13,0);

\node at (2.5,1.5){3};\node at (3.5,1.5){2};\node at
(9.5,1.5){3};\node at (11.5,1.5){2};

 \node at (0.5,-0.5){1};\node at (1.5,-0.5){2};\node at
(2.5,-0.5){3};\node at (3.5,-0.5){4};\node at (4.5,-0.5){5};\node at
(5.5,-0.5){6};

\node at (7.5,-0.5){1};\node at (8.5,-0.5){2};\node at
(9.5,-0.5){3};\node at (10.5,-0.5){4};\node at (11.5,-0.5){5};\node
at (12.5,-0.5){6};

\node at (0.5,0.5){1};\node at (2.5,0.5){3}; \node at
(12.5,0.5){6};\node at (9.5,0.5){3};\node at (3.5,0.5){4};\node at
(11.5,0.5){5};

 \node at (2.7,-1.5){\scriptsize$sh(F_5)=(1,0,2^2,0^{2})$};\node at (10.1,-1.5){\scriptsize$sh(G_5)=(0^{2},2,0,2,1)$};\node at
 (7.5,-3.7){$key(sh(G_5))=\begin{array}{cc}6\\{\color{red}5}&{\color{red}5}\\3&3\end{array}>
  \begin{array}{cc}6\\{\color{red}4}&{\color{red}4}\\3&3\end{array}=key(\omega sh(F_5))$};

  \footnotesize

\draw[line width=1pt] (15,0) -- (21,0);

\draw[line width=1pt] (15,0) rectangle (16,1);\draw[line width=1pt]
(17,0) rectangle (18,1);\draw[line width=1pt] (18,0) rectangle
(19,1);\draw[line width=1pt] (17,1) rectangle (18,2);\draw[line
width=1pt] (18,1) rectangle (19,2);\draw[line width=1pt] (20,0)
rectangle (21,1);

\node at (15.5,-0.5){1};\node at (16.5,-0.5){2}; \node at
(17.5,-0.5){3}; \node at (18.5,-0.5){4}; \node at (19.5,-0.5){5};
\node at (20.5,-0.5){6};

\draw[line width=1pt] (22,0) -- (28,0);

\draw[line width=1pt] (22,0) rectangle (23,1); \draw[line width=1pt]
(24,0) rectangle (25,1);\draw[line width=1pt]
 (26,0) rectangle (27,1); \draw[line width=1pt](27,0) rectangle
(28,1); \draw[line width=1pt]  (24,1) rectangle (25,2); \draw[line
width=1pt]  (26,1) rectangle (27,2);

\node at (22.5,-0.5){1};\node at (23.5,-0.5){2}; \node at
(24.5,-0.5){3}; \node at (25.5,-0.5){4}; \node at (26.5,-0.5){5};
\node at (27.5,-0.5){6};

\node at (15.5,0.5){1};\node at (17.5,0.5){3}; \node at
(17.5,1.5){3}; \node at (18.5,1.5){2};\node at (18.5,0.5){4};\node
at (20.5,0.5){6};

\node at (24.5,0.5){3}; \node at (24.5,1.5){3}; \node at
(26.5,0.5){5}; \node at (27.5,0.5){6};\node at (26.5,1.5){2}; \node
at (22.5,0.5){1};

 \node at (17.6,-1.5){\scriptsize$sh(F_6)=(1,0,2^{2},0,1)$};\node at (25.5,-1.5){\scriptsize$sh(G_6)=(1,0,2,0,2,1)$};\node at
 (20,-4.5){$key(sh(G_6))=\begin{array}{cc}6\\{\color{red}5}\\3&{\color{red}5}\\1&3\end{array}>
 \begin{array}{cc}6\\{\color{red}4}\\3&{\color{red}4}\\1&3\end{array}=key(\omega sh(F_6))$};
 \node at(6.5,0){,}; \node at (14,0){;}; \node at (21.5,0){,};
\end{tikzpicture}\end{center}
\item If  the biword $w$, in $2$., is restricted to the last 4 biletters,  $sh(G)$ and $\omega sh(F)$ are not comparable. However, when the two first are added,  $sh(G)>\omega sh(F)$ holds, as one sees above.
\end{enumerate}
}
\end{ex}

\section{ Isobaric divided differences and crystal graphs}
\label{sec:iso}
We review the main results on Demazure operators with an eye on their combinatorial interpretations either as bubble sorting operators acting on weak compositions or their combinatorial version in terms of crystal (coplactic) operators to be used in the last section.
\label{sec:isob}
 \subsection{Isobaric divided differences,  and the generators of the $0$-Hecke algebra}
The action of the simple transpositions $s_i$  $\in\mathfrak{S}_n$ on   weak compositions  
in $\mathbb{N}^n$, 
  induces an action of $\mathfrak{S}_n$ on  the polynomial ring $\mathbb{Z}[x_1,  \ldots,x_n]$  by considering weak compositions $\alpha\in\mathbb{N}^n$ as exponents of monomials $x^\alpha:=x_1^{\alpha_1}x_2^{\alpha_2}\cdots x_n^{\alpha_n}$ \cite{lascouxdraft}, and  defining $s_ix^\alpha:=x^{s_i\alpha}$ as the transposition
of $ x_i$ and $x_{i+1}$ in the monomial $x^\alpha$. If $f\in\mathbb{Z}[x_1,\ldots,x_n]$,  $s_if$ indicates the result of the action of $s_i$ in each monomial of $f$.
 For $i=1,\dots, n-1$, one defines the linear operators $\pi_i$,
 $\widehat\pi_i$ on $\mathbb{Z}[x_1, \ldots,x_n]$  by
\begin{equation}\label{demazureact}
 \pi_if=\frac{x_if-s_i(x_{i}f)}{x_i-x_{i+1}},~~~~~~
\widehat{\pi}_i f=(\pi_i-1)f=\pi_i f-f,
\end{equation}
where $1$ is the identity operator on $\mathbb{Z}[x_1,\ldots,x_n]$.
These operators are called  isobaric divided differences \cite{lascouxdraft}, and the first is the Demazure operator \cite{demazure} for the general linear Lie algebra $\mathfrak{gl}_n(\mathbb {C})$.
Isobaric divided difference  operators $\pi_i$ and $\widehat\pi_i$, $1\le i<n$,  \eqref{demazureact}, have an equivalent definition
\begin{equation}\label{demazureact1}
\pi_i(x_i^ax_{i+1}^bm)=\begin{cases} x_i^ax_{i+1}^bm+(\sum_{j=1}^{a-b}x_i^{a-j}x_{i+1}^{b+j})m,&\mbox{if $a>b$},\\
x_i^ax_{i+1}^bm,&\mbox{if $a=b$},\\
x_i^ax_{i+1}^bm-(\sum_{j=0}^{b-a-1}x_i^{a+j}x_{i+1}^{b-j})m,&\mbox{if $a<b$},
\end{cases}
\end{equation}
where $m$ is a monomial  not containing $x_i$ nor $x_{i+1}$. It follows from the definition that $\pi_i(f)=f$ and $\widehat\pi_i(f)=0$ if and only if $s_if=f$. They both satisfy the commutation and
the braid relations   \eqref{cox} of $\mathfrak{S}_n$,
and this
guarantees that, for any permutation $\sigma\in \mathfrak{S}_n$, there exists a well defined isobaric divided difference
$\pi_{\sigma}:=\pi_{i_N}\cdots\pi_{i_2}\pi_{i_1}$ and $\hat\pi_{\sigma}:=\hat\pi_{i_N}\cdots\hat\pi_{i_2}\hat\pi_{i_1}$, where $(i_N,\ldots ,{i_2},{i_1})$ is any reduced word of $\mathfrak{S}_n$. In addition, they satisfy the quadratic relations
$\pi_i^2=\pi_i$ and $\hat\pi_i^2=-\hat\pi_i.$

 The $0$-Hecke algebra $H_n(0)$ of $\mathfrak{S}_n$, a deformation of the group algebra of $\mathfrak{S}_n$,
is an associative
$\mathbb{C}$-algebra generated by $T_1,\ldots,T_{n-1}$ satisfying the commutation and the braid relations of the symmetric group $\mathfrak{S}_n$,
and the quadratic relation $T_i^2=T_i$ for $1\le i<n$. Setting $\widehat T_i:=T_i-1$, for $1\le i<n$,  one obtains another set of generators of the
$0$-Hecke algebra $H_n(0)$. The sets
  $\{T_\sigma: \sigma\in \mathfrak{S}_n\}$ and $\{\hat T_\sigma: \sigma\in \mathfrak{S}_n\}$ are both linear bases for $H_n(0)$, where $T_\sigma=T_{i_N}\cdots T_{i_2} T_{i_1}$ and $\hat T_{\sigma}:=\hat T_{i_N}\cdots\hat T_{i_2}\hat T_{i_1}$, for any reduced expression  $s_{i_N}\cdots s_{i_2}s_{i_1}$ in $\mathfrak{S}_n$ \cite{bourbaki}.
Since Demazure operators $\pi_i$ \eqref{demazureact} or bubble sort operators \eqref{bubblerelation} satisfy the same relations as $T_i$, and similarly for isobaric divided difference operators $\widehat\pi_i$ \eqref{demazureact} and $\widehat T_i$, the $0$-Hecke algebra $H_n(0)$ of $\mathfrak{S}_n$ may be viewed as an algebra of operators realised either by any of the two isobaric divided differences \eqref{demazureact}, or by bubble sort operators \eqref{bubblerelation}, swapping entries $i$ and $i+1$ in a weak composition $\alpha$, if $\alpha_i>\alpha_{i+1}$, and doing nothing, otherwise. Therefore,  the two  families
  $\{\pi_\sigma: \sigma\in \mathfrak{S}_n\}$ and $\{\hat\pi_\sigma:\sigma\in \mathfrak{S}_n\}$ are both linear bases for $H_n(0)$, and
 from the relation  $\widehat \pi_i=\pi_i-1$,  the change of basis from the first to the second is given by a sum over the Bruhat order in $\mathfrak{S}_n$, precisely \cite{lascouxgrothendick,ponsgrothendick},
\begin{eqnarray}\label{bruhatdec1}\pi_\sigma=\sum_{\theta\le \sigma}\hat\pi_\theta.
\end{eqnarray}


\subsection{Demazure characters, Demazure atoms and sorting operators}  Let $\lambda\in\mathbb{N}^n$ be a partition and $\alpha$ a weak composition in the $\mathfrak{S}_n$-orbit of $\lambda$.
  Write $\alpha=\sigma\lambda$, where  $\sigma$
  is a minimal length coset representative of $\mathfrak{S}_n/stab_\lambda$.
  The   key polynomial \cite{lascouxkeys,reiner} or  Demazure character \cite{demazure,joseph} in type $A$, corresponding to the dominant weight $\lambda$ and permutation $\sigma$,  is the polynomial  in $\mathbb{Z}[x_1, \ldots,x_n]$ indexed by the weak composition $\alpha\in\mathbb{N}^n$, defined by
\begin{equation}\label{key}
\kappa_{\alpha}:=\pi_{\sigma}x^\lambda,
\end{equation}
and the  standard basis \cite{lasschutz,lascouxkeys} or  Demazure atom \cite{masondemazure} is defined similarly,
\begin{equation}\label{keyhat}\widehat\kappa_{\alpha}:=\widehat\pi_{\sigma}x^\lambda.\end{equation}
Due to  \eqref{bruhatdec1}, the   identity \eqref{keyhat} consists of all monomials in $\kappa_{\alpha}$ which
do not appear in  $\kappa_{\beta}$ for any $\beta<\alpha$.
  Thereby,  key polynomials \eqref{key} are decomposed into Demazure atoms \cite{lascouxkeys,lascouxdraft},
\begin{equation}\label{bruhatdec2}\kappa_{\alpha}=\sum_{\beta\le
\alpha}\widehat\kappa_{\beta}.\end{equation}

Key polynomials $\{\kappa_{\alpha}:$ $\alpha\in$ $\mathbb{N}^n\}$ and Demazure atoms $\{\hat\kappa_{\alpha}:$ $\alpha$ $\in\mathbb{N}^n\}$ form  linear $\mathbb{Z}$-basis for $\mathbb{Z}[x_1, \ldots,x_n]$ \cite{reiner}. The change of basis from the first to the second is expressed in \eqref{bruhatdec2}.
 The operators $\pi_i$ act on key polynomials $\kappa_\alpha$ via elementary bubble sorting operators on the entries of the weak composition $\alpha$
 \cite{reiner},
\begin{equation}\label{dempro}\pi_i\kappa_\alpha=\begin{cases} \kappa_{s_i\alpha} & \mbox{if } \alpha_i>\alpha_{i+1} \\ \kappa_\alpha & \mbox{if } \alpha_i\le \alpha_{i+1} \end{cases} .\end{equation}
 This suggests the following recursive definition of key polynomials \cite{lascouxdraft}.
 For $\alpha\in \mathbb{N}^n$,
the key polynomial $\kappa_\alpha$ (resp. $\hat\kappa_\alpha$) is
$\kappa_\alpha=\hat\kappa_\alpha=x^\alpha, \;\mbox{if $\alpha$ is a partition.}$
Otherwise,
$\kappa_\alpha = \pi_i\kappa_{s_i\alpha}\,\mbox{(resp. $\hat\kappa_\alpha= $ $\hat\pi_i\hat\kappa_{s_i\alpha}$)},\; \mbox{if $\alpha_{i+1} > \alpha_i$}$.
The key polynomial $\kappa_\alpha$ is symmetric in $x_i$ and $x_{i+1}$ if and only if $\alpha_{i+1} \ge \alpha_i$. Thus it lifts the Schur polynomial $s_{\alpha^{+}}(x)$, $\kappa_\alpha=s_{\alpha^{+}}(x)$, when $\alpha_1\le\cdots\le\alpha_n$.
 Henceforth, \eqref{bruhatdec2} contains, as a special case, the decomposition of a Schur polynomial, into Demazure atoms, \begin{equation}\label{bruhatdecschur}s_{\alpha^+}(x)=\sum_{\alpha\in
\mathfrak{S}_n\alpha^+}\widehat\kappa_{\alpha}.\end{equation}

 \subsection{Crystals and combinatorial descriptions of Demazure operators 
 } In \cite{lascouxkeys} Lascoux and Sch\"utzenberger have given a combinatorial version for Demazure operators $\pi_i$ and $\hat\pi_i$ in terms of  crystal (or coplactic) operators $f_i$, $e_i$ to produce a
   crystal graph on  $\mathfrak{B}^\lambda$, the set of SSYTs with entries $\le n$ and shape $\lambda$ \cite{kashi,kashiwaraq,thibon}.
A SSYT can be uniquely recovered from its column word. The action of the crystal
operators $f_i$ and $e_i$, $1\le i<n$, on $T\in\mathfrak{B}^\lambda$ is described by the usual parentheses matching on the column word of $T$, and we refer the reader for details to \cite{kwon,thibon}.
 For convenience, we extend $f_i$ and $e_i$ to $\mathfrak{B}^\lambda\cup \{0\}$ by setting them to map $0$ to $0$.

 Kashiwara and Nakashima \cite{kashiwaraq,kashinakashima}
 have given to $\mathfrak{B}^\lambda$ a $U_q(\mathfrak{gl}_n)$- crystal structure. We view crystals as special graphs. The crystal
graph  on $\mathfrak{B}^\lambda$
is   a coloured
directed graph whose vertices  are
the elements of $\mathfrak{B}^\lambda$, and the edges are
coloured with a colour $i$,  for each pair of crystal operators
$f_i,~ e_i$, such that there exists a coloured $i$-arrow from the
vertex $T$ to $T'$ if and only if $f_i(T)=T'$, equivalently,
$e_i(T')=T$. We refer to  \cite{kashiwaraoncrystal,hongkang,lecouvey} for details.
Start with  the Yamanouchi tableau  $Y:=key(\lambda)$ and apply  all  the crystal operators
$f_{i}$'s    until each unmatched $i$ has been
converted to $i+1$, for $1\le i<n$ \cite{kashiwaraq,kashiwaraoncrystal}. See Example~\ref{crystal}.
  The resulting set is $\mathfrak{B}^\lambda$  whose elements index  basis vectors for the representation of the quantum group  $U_q(\mathfrak{gl}_n)$ with highest weight $\lambda$. From the definition of this graph,   in each vertex there is at most one incident arrow of colour $i$, and at most one outgoing arrow of colour $i$.  Hence, for any $i$, $1\le i<n$, the crystal graph on $\mathfrak{B}^\lambda$ decompose $\mathfrak{B}^\lambda$ into disjoint
     connected components of colour $i$, $P_1\overset{i}\rightarrow\cdots\overset{i}\rightarrow P_k$, called $i$-strings, having lengths $k-1\ge 0$. A SSYT $P_1$, 
     satisfying $e_i(P_1)=0$,  is said to be the head of the $i$-string, and, in the case of $f_i(P_k)=0$, $P_k$ is called the end of the $i$-string.
Given $\alpha$  in the  $\mathfrak{S}_n\lambda$,
the Demazure crystal  $\mathfrak{B}_\alpha$
 is viewed as a certain subgraph of the crystal  $\mathfrak{B}^\lambda$  which can be defined inductively  \cite{kashiwara,littel} as $ \mathfrak{B}_\alpha=\{Y\}$ if $\alpha=\lambda$, otherwise
 \begin{equation}\label{demazuregraph}\mathfrak{B}_\alpha=\{f_i^k(T): T\in\mathfrak{B}_{s_i\alpha}, k\ge 0, e_i(T)=0\}\setminus\{0\},
 \quad\mbox{ if $\alpha_{i+1}>\alpha_i$}.\end{equation}
 (When $\alpha$ is the reverse of $\lambda$, one has $\mathfrak{B}_{\omega\lambda}=\mathfrak{B}^\lambda$.)
The vertices of this subgraph index  basis vectors of the Demazure module  $V_\sigma(\lambda)$ where $\sigma$ is a minimal length coset representative modulo the stabiliser of $\lambda$, such that $\sigma\lambda=\alpha$.
In fact $\mathfrak{B}_{\alpha}$ \eqref{demazuregraph} is well defined, it does not depend on the reduced expression for $\sigma$. More generally, write $\alpha=s_{i_N}\dots s_{i_2} s_{i_1}
\lambda$, with $(i_N,\dots, i_2, i_1)$ a reduced word, then
 apply the crystal operator
$f_{i_1}$ to  $Y$ until each unmatched $i_1$ has been
converted to $i_{1}+1,$ then apply similarly
$f_{i_{2}}$ to each of the previous Young tableaux until each unmatched $i_2$ has been
converted to $i_{2}+1$, and continue this procedure with $f_{i_3},\ldots,$  $f_{i_N}$.  Therefore,
   $\mathfrak{B}_\alpha=$$\{f_{i_N}^{m_N} $ $\ldots $ $f_{i_1}^{m_1}(Y):m_k\ge 0\}\setminus\{0\}$.

 Let $T\in\mathfrak{B}^\lambda$, and  $f_{s_i}(T):=\{f^{m}_i(T): m\ge 0\}\setminus\{0\}$. (If $f_i(T)=0$, $f_{s_i}(T)=\{T\}$.)
     If $P$ is the head of an $i$-string  $S\subseteq\mathfrak{B}^\lambda$, $S= f_{s_i}(P)$. We abuse notation and say
      the Demazure operator $\pi_i$ \eqref{demazureact1} sends  the head of an $i$-string to the sum of all
elements of the string \cite{lascouxkeys,kashiwara},
    \begin{equation}\label{demazureoperator}\pi_i(x^P)=\sum_{T\in S}x^T
     ,\;\;\mbox{and}\;\;
     \pi_i\bigg(\sum_{T\in S}x^T\bigg)=\pi_i(x^P).\end{equation}
   If $\beta\le\alpha$, then $\mathfrak{B}_{\beta}\subseteq \mathfrak{B}_{\alpha}$. Let $s_i\alpha<\alpha$, equivalently, $\alpha_i<\alpha_{i+1}$. For any $i$-string $S\subseteq\mathfrak{B}^\lambda$, either   $\mathfrak{B}_{s_i\alpha}\cap S=\mathfrak{B}_{\alpha}\cap S$ is empty, or  $\mathfrak{B}_{s_i\alpha}\cap S=$$\mathfrak{B}_{\alpha}\cap S=S$, or $\mathfrak{B}_{s_i\alpha}\cap S$ is only the head of $S$ in which case  $S\subseteq\mathfrak{B}_\alpha$.  Since $\mathfrak{B}^\lambda$ is the disjoint union of $i$-strings, from these string properties, and \eqref{demazureoperator}, one has for any $i$-string $S$
  \begin{equation}\label{demazureoperator2}\sum_{T\in{\mathfrak{B}}_{\alpha}\cap S}x^T=\pi_i\bigg(\sum_{T\in{\mathfrak{B}}_{s_i\alpha}\cap S}x^T\bigg); \;\mbox{and}\;\sum_{T\in{\mathfrak{B}}_{\alpha}}x^T=\pi_i\bigg(\sum_{T\in{\mathfrak{B}}_{s_i\alpha}}x^T\bigg).\end{equation}
  Henceforth,
    $\kappa_{\alpha}=\pi_i\kappa_{s_i\alpha}$, if $\alpha_i<\alpha_{i+1}$, and $\kappa_{\alpha}=\pi_{i_N}\cdots\pi_{i_1}x^\lambda$ for any reduced word $({i_N},\ldots,{i_1})$  such that $s_{i_N}\cdots s_{i_1}\lambda=\alpha$.
  \begin{ex}\label{crystal}
 The Demazure crystal  $\mathfrak{B}_{s_2s_1\lambda}$ with
 $\lambda=(3,1,0)$ is shown with thick edges while the rest of the crystal  graph $\mathfrak{B}^{\lambda}$ is shown with thinner lines.
 The $1$ and $2$-strings are represented in black and red colours respectively. The key polynomial $\kappa_{(1,0,3)}$ is calculated using the thick coloured strings in the Demazure crystal graph  ${\mathfrak{B}}_{s_2s_1(3,1,0)}$, $\kappa_{(1,0,3)}=\pi_2\pi_1x^{(3,1,0)}$
$=\pi_2 (x^{(3,1,0)+x^{(2,2,0)}+x^{(1,3,0)}})$
$=x^{(3,1,0)}+x^{(2,2,0)}+x^{(1,3,0)}+x^{(3,0,1)}+x^{(2,1,1)}$
$+x^{(2,0,2)}+x^{(1,2,1)}+x^{(1,1,2)}+x^{(1,0,3)}.$

\begin{tikzpicture}[scale=0.7] \draw[line width=1pt] (0,9.5)
rectangle (0.5,10);\draw[line width=1pt] (0.5,9.5) rectangle
(1,10);\draw[line width=1pt] (1,9.5) rectangle (1.5,10);\draw[line
width=1pt] (0,10) rectangle (0.5,10.5);
\draw[line width=1pt] (2.5,6) rectangle (3,6.5);\draw[line
width=1pt] (3,6) rectangle (3.5,6.5);\draw[line width=1pt] (3.5,6)
rectangle (4,6.5);\draw[line width=1pt] (2.5,6.5) rectangle (3,7);
\draw[line width=1pt] (2.5,8) rectangle (3,8.5);\draw[line
width=1pt] (3,8) rectangle (3.5,8.5);\draw[line width=1pt] (3.5,8)
rectangle (4,8.5);\draw[line width=1pt] (2.5,8.5) rectangle (3,9);
\draw[line width=1pt] (3.5,13) rectangle (4,13.5);\draw[line
width=1pt] (4,13) rectangle (4.5,13.5);\draw[line width=1pt]
(4.5,13) rectangle (5,13.5);\draw[line width=1pt] (3.5,13.5)
rectangle (4,14);
\draw[line width=1pt] (5,2) rectangle (5.5,2.5);\draw[line
width=1pt] (5.5,2) rectangle (6,2.5);\draw[line width=1pt] (6,2)
rectangle (6.5,2.5);\draw[line width=1pt] (5,2.5) rectangle (5.5,3);
\draw[line width=1pt] (5,4) rectangle (5.5,4.5);\draw[line
width=1pt] (5.5,4) rectangle (6,4.5);\draw[line width=1pt] (6,4)
rectangle (6.5,4.5);\draw[line width=1pt] (5,4.5) rectangle (5.5,5);
\draw[line width=1pt] (5,9) rectangle (5.5,9.5);\draw[line
width=1pt] (5.5,9) rectangle (6,9.5);\draw[line width=1pt] (6,9)
rectangle (6.5,9.5);\draw[line width=1pt] (5,9.5) rectangle
(5.5,10);
\draw[line width=1pt] (7,0) rectangle (7.5,0.5);\draw[line
width=1pt] (7.5,0) rectangle (8,0.5);\draw[line width=1pt] (8,0)
rectangle (8.5,0.5);\draw[line width=1pt] (7,0.5) rectangle (7.5,1);
\draw[line width=1pt] (7,6) rectangle (7.5,6.5);\draw[line
width=1pt] (7.5,6) rectangle (8,6.5);\draw[line width=1pt] (8,6)
rectangle (8.5,6.5);\draw[line width=1pt] (7,6.5) rectangle (7.5,7);
\draw[line width=1pt] (7,11.5) rectangle (7.5,12);\draw[line
width=1pt] (7.5,11.5) rectangle (8,12);\draw[line width=1pt]
(8,11.5) rectangle (8.5,12);\draw[line width=1pt] (7,12) rectangle
(7.5,12.5);
\draw[line width=1pt] (8,2) rectangle (8.5,2.5);\draw[line
width=1pt] (8.5,2) rectangle (9,2.5);\draw[line width=1pt] (9,2)
rectangle (9.5,2.5);\draw[line width=1pt] (8,2.5) rectangle (8.5,3);
\draw[line width=1pt] (8,4) rectangle (8.5,4.5);\draw[line
width=1pt] (8.5,4) rectangle (9,4.5);\draw[line width=1pt] (9,4)
rectangle (9.5,4.5);\draw[line width=1pt] (8,4.5) rectangle (8.5,5);
\draw[line width=1pt] (9.5,7) rectangle (10,7.5);\draw[line
width=1pt] (9.5,7) rectangle (10.5,7.5);\draw[line width=1pt]
(10.5,7) rectangle (11,7.5);\draw[line width=1pt] (9.5,7.5)
rectangle (10,8);
\draw[line width=1pt] (11,4) rectangle (11.5,4.5);\draw[line
width=1pt] (11.5,4) rectangle (12,4.5);\draw[line width=1pt] (12,4)
rectangle (12.5,4.5);\draw[line width=1pt] (11,4.5) rectangle
(11.5,5);
\draw[line width=1pt] (10.5,9) rectangle
(11,9.5);\draw[line width=1pt] (11,9) rectangle
(11.5,9.5);\draw[line width=1pt] (11.5,9) rectangle
(12,9.5);\draw[line width=1pt] (10.5,9.5) rectangle (11,10);
\node at (0.75,9.75){1};\node at (1.25,9.75){1}; \node at
(0.25,9.75){1}; \node at (0.25,10.25){3};
\node at (2.75,6.25){1};\node at (3.25,6.25){1}; \node at
(3.75,6.25){3}; \node at (2.75,6.75){3}; \node at
(2.75,8.25){1};\node at (3.25,8.25){1}; \node at (3.75,8.25){2};
\node at (2.75,8.75){3}; \node at (3.75,13.25){1};\node at
(4.25,13.25){1}; \node at (4.75,13.25){1}; \node at (3.75,13.75){2};
\node at (5.25,2.25){1};\node at (5.75,2.25){3}; \node at
(6.25,2.25){3}; \node at (5.25,2.75){3};
 \node at(5.25,4.25){1};\node at (5.75,4.25){2}; \node at (6.25,4.25){3};
\node at (5.25,4.75){3}; \node at (5.25,9.25){1};\node at
(5.75,9.25){1}; \node at (6.25,9.25){3}; \node at (5.25,9.75){2};
\node at (7.25,0.25){2};\node at (7.75,0.25){3}; \node at
(8.25,0.25){3}; \node at (7.25,0.75){3}; \node at
(7.25,6.25){1};\node at (7.75,6.25){2}; \node at (8.25,6.25){2};
\node at (7.25,6.75){3}; \node at (7.25,11.75){1};\node at
(7.75,11.75){1}; \node at (8.25,11.75){2}; \node at (7.25,12.25){2};
\node at (8.25,2.25){2};\node at (8.75,2.25){2}; \node at
(9.25,2.25){3}; \node at (8.25,2.75){3}; \node at
(8.25,4.25){1};\node at (8.75,4.25){3}; \node at (9.25,4.25){3};
\node at (8.25,4.75){2};
\node at (9.75,7.25){1};\node at (10.25,7.25){2}; \node at
(10.75,7.25){3}; \node at (9.75,7.75){2};
 \node at(11.25,4.25){2};\node at (11.75,4.25){2}; \node at (12.25,4.25){2};
\node at (11.25,4.75){3}; \node at(10.75,9.25){1};\node at
(11.25,9.25){2}; \node at (11.75,9.25){2}; \node at (10.75,9.75){2};
\draw[arrows=->,line width=2pt] (5.25,13.35) --
(6.75,12.5);\draw[arrows=->,line
width=2pt] (8.65,11.5) -- (10.75,10.25); \node at (6,13.5){$1$};
\node at (10.25,11.25){$1$};
\draw[arrows=->,line width=1pt] (7,9) -- (9.15,7.75); \node at
(8.25,8.75){$1$};
\draw[arrows=->,line width=1pt] (1.75,9.50) --
(2.5,9.05);\draw[arrows=->,line
width=1pt] (4.25,8.10) -- (6.75,6.65);\draw[arrows=->,line width=1pt]
(8.25,5.75) -- (10.75,4.25);\node at (2.2,9.8){$1$};\node at
(5.55,7.8){$1$};\node at (10,5.25){$1$};
\draw[arrows=->,line width=1pt] (3.5,5.75) --
(4.75,5);\draw[arrows=->,line width=1pt]
(6.75,3.75) -- (7.85,3.05);\node at (4.65,5.65){$1$};\node at
(7.60,3.7){$1$};
\draw[arrows=->,line width=1pt] (6.05,1.85) -- (7,1.25);\node at
(6.9,2){$1$};
\draw[arrows=->,line width=1pt, color=red] (10.75,4) --
(9.45,2.75);\draw[arrows=->,line
width=1pt, color=red] (8.55,1.75) -- (7.75,1);\node[color=red] at
(9.8,3.6){$2$};\node[color=red] at (7.8,1.6){$2$};
\draw[arrows=->,line width=1pt, color=red] (6.75,6) -- (5.75,4.98);
\node[color=red] at (5.9,5.8){$2$};
\draw[arrows=->,line width=2pt, color=red] (11.95,8.75) --
(10.90,7.75);\draw[arrows=->,line width=2pt, color=red] (10.10,6.75) --
(8.55,5.25);\draw[arrows=->,line width=2pt, color=red] (7.75,4.45) --
(6,2.75); \node[color=red] at (10.9,8.35){$2$}; \node[color=red]
at (9,6.3){$2$}; \node[color=red] at (7,4.5){$2$};
\draw[arrows=->,line width=2pt, color=red] (7.6,11.15) --
(6.1,9.65);\draw[arrows=->,line width=2pt, color=red] (5.25,8.75) --
(3.25,6.75); \node[color=red] at (6.3,10.6){$2$}; \node[color=red]
at (4.5,8.6){$2$};
\draw[arrows=->,line width=2pt, color=red] (3.25,12.75) -- (1,10.5);
\node[color=red] at (1.8,11.95){$2$};
\end{tikzpicture}\\
\end{ex}
 Set $\widehat{\mathfrak{B}}_{\alpha}:=$
  $\mathfrak{B}_{\alpha}\setminus \bigcup_{\beta<\alpha} \mathfrak{B}_{\beta}$. Then ${\mathfrak{B}_\alpha=\biguplus_{\beta\le\alpha}\widehat{\mathfrak{B}}_\beta}$.
 In Example~\ref{crystal}, with $\alpha=(1,0,3)=$ $s_2s_1(3,1,0)$, the  component  $\widehat{\mathfrak{B}}_{s_2s_1(3,1,0)}=$ $\mathfrak{B}_{s_2s_1(3,1,0)}$ $\setminus $ $(\mathfrak{B}_{s_1(3,1,0)}\cup$ $ \mathfrak{B}_{s_2(3,1,0)})$ consists of
   the two lowest thick red strings, starting in the thick black string, minus their heads.
Lascoux and Sch\"utzenberger have  characterised  the SSYTs in
$\widehat{\mathfrak{B}}_{\alpha}$ \cite{lascouxkeys} as those whose right key is  $key(\alpha)$, precisely the unique key tableau in $\widehat{\mathfrak{B}}_{\alpha}$.
The Demazure crystal $\mathfrak{B}_{\alpha}$ consists of all Young tableaux in $\mathfrak{B}^{\lambda}$ with right key bounded by $key(\alpha)$.
\begin{thm}[\sc Lascoux, Sch\"utzenberger \cite{lasschutz,lascouxkeys}]
The Demazure atom $\widehat\kappa_{\sigma\lambda}=\hat\pi_\sigma x^\lambda$ is the sum of the weight monomials of all SSYTs with entries $\le n$ whose right key is equal to $key(\sigma\lambda)$, with $\sigma$ a minimal length coset representative modulo the stabiliser of $\lambda$.
    \end{thm}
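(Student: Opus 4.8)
The plan is to assemble the statement from two ingredients already recorded just above it: the Lascoux--Sch\"utzenberger set description $\widehat{\mathfrak{B}}_\alpha=\{T\in\mathfrak{B}^\lambda:K_+(T)=key(\alpha)\}$, and the algebraic decomposition $\kappa_\alpha=\sum_{\beta\le\alpha}\widehat\kappa_\beta$ of \eqref{bruhatdec2}. Writing $\alpha=\sigma\lambda$ with $\sigma$ a minimal length coset representative, it suffices to prove the single monomial identity $\widehat\kappa_\alpha=\sum_{T\in\widehat{\mathfrak{B}}_\alpha}x^T$, where $x^T$ is the weight monomial of $T$; feeding the right-key characterisation of $\widehat{\mathfrak{B}}_\alpha$ into this identity gives exactly $\widehat\kappa_{\sigma\lambda}=\sum_{K_+(T)=key(\sigma\lambda)}x^T$, which is the assertion.

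First I would establish the character-level identity $\kappa_\alpha=\sum_{T\in\mathfrak{B}_\alpha}x^T$ for every $\alpha\in\mathfrak{S}_n\lambda$, by induction on $\ell(\sigma)$. The base case $\alpha=\lambda$ is immediate, since $\mathfrak{B}_\lambda=\{Y\}$ with $Y=key(\lambda)$ of weight $\lambda$, so both sides equal $x^\lambda=\kappa_\lambda$. For the inductive step choose $i$ with $\alpha_i<\alpha_{i+1}$, so that $s_i\alpha<\alpha$ and the length of the representative drops; then the recursive rule $\kappa_\alpha=\pi_i\kappa_{s_i\alpha}$, the string identity $\sum_{T\in\mathfrak{B}_\alpha}x^T=\pi_i\bigl(\sum_{T\in\mathfrak{B}_{s_i\alpha}}x^T\bigr)$ of \eqref{demazureoperator2}, and the inductive hypothesis close the induction. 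Well-definedness (independence of the chosen reduced word for $\sigma$) is guaranteed by the braid relations satisfied by the $\pi_i$ and by the fact, recorded after \eqref{demazuregraph}, that $\mathfrak{B}_\alpha$ does not depend on the reduced expression.

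Next I would run a Möbius-inversion argument over the Bruhat poset on $\mathfrak{S}_n\lambda$. Using the disjoint union $\mathfrak{B}_\alpha=\biguplus_{\beta\le\alpha}\widehat{\mathfrak{B}}_\beta$, the identity just proved rewrites as $\sum_{\beta\le\alpha}\bigl(\sum_{T\in\widehat{\mathfrak{B}}_\beta}x^T\bigr)=\kappa_\alpha=\sum_{\beta\le\alpha}\widehat\kappa_\beta$. Since the Bruhat order on the orbit has $\lambda$ as its unique minimum, I induct upward: at $\lambda$ both summands reduce to $x^\lambda$ because $\widehat{\mathfrak{B}}_\lambda=\mathfrak{B}_\lambda=\{Y\}$, and for general $\alpha$ all strictly smaller contributions coincide by the inductive hypothesis, forcing the top terms to agree, namely $\widehat\kappa_\alpha=\sum_{T\in\widehat{\mathfrak{B}}_\alpha}x^T$. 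Combining this with $\widehat{\mathfrak{B}}_\alpha=\{T\in\mathfrak{B}^\lambda:K_+(T)=key(\alpha)\}$ finishes the proof.

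The genuinely substantive step, and the main obstacle, is the first one: identifying the Demazure character with the weight generating function of its Demazure crystal. Everything there rests on the decomposition of $\mathfrak{B}^\lambda$ into $i$-strings and on the fact that $\pi_i$ carries the head of an $i$-string to the sum over the whole string, as in \eqref{demazureoperator}; the crux is verifying that each $i$-string meets $\mathfrak{B}_{s_i\alpha}$ in one of only three ways (empty, its head alone, or the whole string), which is precisely the compatibility making \eqref{demazureoperator2} valid. By contrast the Möbius inversion is purely formal, and the final passage to right keys is a direct invocation of the Lascoux--Sch\"utzenberger result cited immediately before the theorem.
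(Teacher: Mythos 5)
The paper does not actually prove this theorem: it is stated as a cited result of Lascoux and Sch\"utzenberger, with the surrounding text merely recording the ingredients (the $i$-string compatibility \eqref{demazureoperator2}, the decomposition $\mathfrak{B}_\alpha=\biguplus_{\beta\le\alpha}\widehat{\mathfrak{B}}_\beta$, the change of basis \eqref{bruhatdec2}, and the right-key description of $\widehat{\mathfrak{B}}_\alpha$) and then the chain of equalities $\hat\kappa_\alpha=\sum_{T\in\widehat{\mathfrak{B}}_\alpha}x^T=\sum_{K_+(T)=key(\alpha)}x^T$ as a summary. So there is no in-paper proof to compare against; what you have written is a coherent and logically correct assembly of exactly those recorded facts. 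Your two inductions are sound: the identity $\kappa_\alpha=\sum_{T\in\mathfrak{B}_\alpha}x^T$ follows by induction on $\ell(\sigma)$ from $\kappa_\alpha=\pi_i\kappa_{s_i\alpha}$ and \eqref{demazureoperator2}, and the upward induction on the Bruhat order of the orbit (equivalently M\"obius inversion, using that $\lambda$ is the unique minimum and both $\kappa_\alpha=\sum_{\beta\le\alpha}\widehat\kappa_\beta$ and $\kappa_\alpha=\sum_{\beta\le\alpha}\sum_{T\in\widehat{\mathfrak{B}}_\beta}x^T$ run over the same index set) correctly forces $\widehat\kappa_\alpha=\sum_{T\in\widehat{\mathfrak{B}}_\alpha}x^T$. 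The one thing you should be candid about is that the final invocation of $\widehat{\mathfrak{B}}_\alpha=\{T\in\mathfrak{B}^\lambda:K_+(T)=key(\alpha)\}$ is not a light formality: that set-theoretic characterisation is the genuinely hard combinatorial content of the Lascoux--Sch\"utzenberger theory (it is not proved in this paper either), and once it is granted, the theorem is, as your argument shows, essentially bookkeeping on top of the crystal-string analysis. A minor point worth spelling out if you wanted a self-contained treatment of \eqref{bruhatdec2}: passing from $\pi_\sigma=\sum_{\theta\le\sigma}\hat\pi_\theta$ over all of $\mathfrak{S}_n$ to a sum over the orbit uses that $\hat\pi_\theta x^\lambda=0$ whenever $\theta$ is not a minimal coset representative modulo $stab_\lambda$, since $\hat\pi_i x^\lambda=0$ for $s_i\in stab_\lambda$.
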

 We may put together the three combinatorial interpretations of Demazure characters and Demazure atoms
   $$\displaystyle{\hat\kappa_\alpha=\sum_{T\in\widehat{\mathfrak{B}}_{\alpha}}x^T=
    \sum_{\begin{smallmatrix}T\in SSYT_n\\K_+(T)=key(\alpha)\end{smallmatrix} }x^T=\sum_{\begin{smallmatrix}F\in SSAF_n\\sh(F)=\alpha\end{smallmatrix}}x^F, }$$ \begin{equation}\kappa_\alpha=\sum_{T\in{\mathfrak{B}}_{\alpha}}x^T=\sum_{\begin{smallmatrix}T\in SSYT_n\\K_+(T)\le key(\alpha)\end{smallmatrix}}x^T=\sum_{\begin{smallmatrix}F\in SSAF_n\\sh(F)\le \alpha\end{smallmatrix}}x^F.\nonumber\end{equation}
 In particular, the sum of the weight  monomials  over all
crystal graph  $\mathfrak{B}^\lambda$ gives the Schur polynomial
$s_\lambda$,
 and thus Demazure atoms
decompose Schur polynomials in $\mathbb{Z}[x_1,\ldots,x_n]$.

\section{Expansions of Cauchy kernels over truncated staircases}
\label{sec:kernels}
\subsection{Cauchy identity and Lascoux's non-symmetric Cauchy kernel expansions}\label{sec:kernels1}
Given $n\in \mathbb{N}$ positive, let $m$ and $k$ be fixed positive integers where $1\le m\le n$ and $1\le k\le n$. 
 Let $x=(x_1,\ldots, x_n)$ and $y=(y_1,\ldots,y_n)$ be two sequences of indeterminates.
The well-known Cauchy identity  expresses the
Cauchy kernel $\prod_{i=1}^{k}\prod_{j=1}^m(1-x_iy_j)^{-1}$, symmetric in $x_i$ and $y_j$ separately, as a sum of
products of Schur polynomials $s_{\mu^+}$ in $(x_1,x_2,\dots, x_k)$ and
$(y_1,y_2,\dots,y_m),$
\begin{equation}\label{aaa1}
\prod_{\begin{smallmatrix}(i,j)\in
(m^k)\end{smallmatrix}}(1-x_iy_j)^{-1}
=\prod_{i=1}^{k}\prod_{j=1}^{m}(1-x_iy_j)^{-1}=\sum_{\mu^+ 
}s_{\mu^+}(x_1,\ldots,x_k)s_{\mu^+}(y_1,\ldots,y_m),
\end{equation}
\noindent over all partitions $\mu^+$   of length
$\le\min\{k,m\}.$ Using either the RSK  correspondence  \cite{knuth}
or the $\Phi$ correspondence,
  the Cauchy formula \eqref{aaa1}
   can be interpreted as a bijection between  monomials, on the left hand side, and pairs of SSYTs or SSAFs on the right. As the basis of key polynomials lifts the Schur polynomials w.r.t. the same list of indeterminates, and key polynomials decompose into Demazure atoms \eqref{bruhatdecschur}, the expansion \eqref{aaa1} can  also be expressed in the two bases of key polynomials.  Assuming $k\le m$, we may write \eqref{aaa1} as
   \begin{eqnarray}\nonumber
&&\sum_{\mu^+\in \mathbb{N}^k}s_{\mu^+}(x_1,\ldots,x_k)s_{(\mu^+,0^{m-k})}(y_1,\ldots,y_m)=\sum_{\mu^+\in \mathbb{N}^k}\sum_{\begin{smallmatrix}\mu\in
\mathfrak{S}_k\mu^+
 \end{smallmatrix}}\widehat{\kappa}_{\mu}(x)\kappa_{(0^{m-k},\omega\mu^+)}(y),\nonumber\\
 &=&\sum_{\mu\in \mathbb{N}^k}\widehat{\kappa}_{\mu}(x)\kappa_{(0^{m-k},\omega\mu^+)}(y).\label{last}
\end{eqnarray}
(Since we are dealing with two sequences of indeterminates $x$ and $y$, it is convenient to write $\kappa_\alpha(x)$ and $\kappa_\alpha(y)$ instead of $\kappa_\alpha$. Similarly for Demazure atoms.)

    We now replace in the Cauchy kernel the rectangle $(m^k)$  by the truncated staircase
$\lambda=(m^{n-m+1},m-1,\dots,n-k+1),$ with  $1\leq m\leq n$, $~1\leq k\leq n,$ and
$n+1\leq m+k$, as shown in Figure~\ref{fig:trunc}.
 If $n+1=m+k$, we recover the rectangle shape $(m^k)$. When $m=n=k,$ one has the
staircase partition $\lambda=(n,n-1,\dots,2,1)$, that is, the cells $(i,j)$ in the NW-SE diagonal of the square diagram $(n^n)$ and
below it, and thus $(i,j)\in\lambda$
if and only if $i+j\leq n+1$.
Lascoux has given in \cite{lascouxcrystal}, and with Fu, in \cite{fulascoux}, the following  expansion for the non-symmetric Cauchy kernel over staircases,
\begin{equation}\label{cau21}\prod_{\begin{smallmatrix}i+j\leq{n+1}\\
1\le i,\,j\le n\end{smallmatrix} }(1-x_iy_j)^{-1}=\sum_{\nu\in
\mathbb{N}^{n}}\widehat{\kappa}_{\nu}(x)\kappa_{\omega
\nu}(y),\end{equation} where   $\hat\kappa$  and $\kappa$  indicate the two
families of key polynomials in $x$ and $y$ respectively, and $\omega$ is the longest permutation
of $\mathfrak{S}_n$.

In \cite{lascouxcrystal}, Lascoux extends \eqref{cau21} to an expansion of $\prod_{(i,j)\in \lambda}
(1-x_iy_j)^{-1}$, over any Ferrers shape $\lambda$, as follows.  Given a Ferrers shape $\lambda$, consider  $\rho  := (t, t-1,\dots, 1)$,  the biggest staircase  contained in
$\lambda$, and a pair of permutations $\sigma(\lambda, NW)$ and $\sigma(\lambda, SE)$ encoding the  cells in  a NW and SE parts of the skew-diagram $\lambda/\rho$, the diagram consisting of the cells in $\lambda$ not in $\rho$. To define such a pair of permutations, one takes an arbitrary cell in the staircase $(t + 1,t,\dots , 1)$ which does not belong to $\lambda$. The SW-NE diagonal passing through this cell
cuts the skew-diagram of $\lambda/\rho$, into two pieces that are called
 the North-West (NW) part and the South-East (SE)  part of
$\lambda/\rho$. Fill each cell of row $r\ge 2$ of the NW part with the number
$r-1$. Similarly, fill each cell of column $c \ge 2$ of the SE part with the
number $c - 1$. Reading the columns of the NW part, from right to
left, top to bottom, and interpreting $r$ as the simple transposition $s_r$, gives a
reduced decomposition of the permutation $\sigma(\lambda,NW)$; similarly, reading rows  of the SE part,
 from top to bottom, and from right to left,  gives the
permutation $\sigma(\lambda, SE)$.
\begin{thm}[{\sc Lascoux, \cite[Theorem~7]{lascouxcrystal}}]
Let $\lambda$ be a partition in $\mathbb{N}^n$, $\rho(\lambda)= (t, t-1,$ $\dots, 1)$ the maximal staircase
contained in the diagram of $\lambda$,  and
 $\sigma(\lambda, NW),$ $\sigma(\lambda, SE)$  the two permutations obtained by
cutting the diagram of $\lambda/\rho$ as explained above. Then
\begin{equation}\label{ext}
\prod_{(i,j)\in \lambda}
(1-x_iy_j)^{-1}=\sum_{\mu\in \mathbb{N}^{t}}(\pi
_{\sigma(\lambda,NW)}\widehat{\kappa}_{\mu}(x))(\pi
_{\sigma(\lambda,SE)}\kappa_{\omega \mu}(y)).\end{equation}
\end{thm}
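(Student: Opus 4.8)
The plan is to derive the general Ferrers-shape expansion \eqref{ext} from the staircase expansion \eqref{cau21} by applying, to both sides of \eqref{cau21} written for the maximal staircase $\rho=(t,t-1,\dots,1)$, the Demazure operator $\pi_{\sigma(\lambda,NW)}$ in the $x$-variables and $\pi_{\sigma(\lambda,SE)}$ in the $y$-variables. Denote by $\pi^{(x)}_i$ and $\pi^{(y)}_j$ the operators of \eqref{demazureact} acting on the two sets of indeterminates. Since these act on disjoint variables they commute, and applying $\pi^{(x)}_{\sigma(\lambda,NW)}\pi^{(y)}_{\sigma(\lambda,SE)}$ to the right-hand side $\sum_{\mu\in\mathbb{N}^t}\widehat\kappa_\mu(x)\kappa_{\omega\mu}(y)$ of \eqref{cau21} (taken at size $t$) yields at once, by linearity, the right-hand side of \eqref{ext}. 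Thus the theorem reduces to the single identity
$$\pi^{(x)}_{\sigma(\lambda,NW)}\,\pi^{(y)}_{\sigma(\lambda,SE)}\;\prod_{(i,j)\in\rho}(1-x_iy_j)^{-1}=\prod_{(i,j)\in\lambda}(1-x_iy_j)^{-1}.$$

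The engine of the argument is a one-cell computation. If $c$ is any indeterminate free of $x_i,x_{i+1}$, then using \eqref{demazureact},
$$\pi^{(x)}_i\bigl((1-c\,x_i)^{-1}\bigr)=\frac{x_i(1-c\,x_{i+1})-x_{i+1}(1-c\,x_i)}{(x_i-x_{i+1})(1-c\,x_i)(1-c\,x_{i+1})}=(1-c\,x_i)^{-1}(1-c\,x_{i+1})^{-1}.$$
Now suppose a shape $\nu\supseteq\rho$ has $\nu_i=\nu_{i+1}+1$, and factor its kernel as $A\cdot S\cdot(1-x_iy_{\nu_i})^{-1}$, where $A$ is free of $x_i,x_{i+1}$ and $S=\prod_{j\le\nu_{i+1}}(1-x_iy_j)^{-1}(1-x_{i+1}y_j)^{-1}$. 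Since $A$ and $S$ are invariant under the swap of $x_i,x_{i+1}$ and $\pi^{(x)}_i$ is linear over such invariants (it satisfies $\pi_i(gh)=g\,\pi_i h$ for $s_i$-symmetric $g$), the displayed identity with $c=y_{\nu_i}$ shows that $\pi^{(x)}_i$ multiplies the kernel by $(1-x_{i+1}y_{\nu_i})^{-1}$, i.e. adjoins the single cell $(i+1,\nu_i)$, raising row $i+1$ to the length of row $i$. Reading \eqref{demazureact} in the $y$-variables gives the mirror statement: when column $j$ of $\nu$ is one longer than column $j+1$, the operator $\pi^{(y)}_j$ adjoins one cell to column $j+1$. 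With the filling conventions preceding the theorem, a NW-cell in row $r$ is governed by $\pi^{(x)}_{r-1}$ and an SE-cell in column $c$ by $\pi^{(y)}_{c-1}$, exactly matching the labels $r-1$, $c-1$ used to build the reduced words of $\sigma(\lambda,NW)$ and $\sigma(\lambda,SE)$.

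It remains to assemble these single-cell steps. Starting from $\rho$, where every pair of consecutive rows (and columns) differs by one, I would add the cells of $\lambda/\rho$ in the order prescribed by the two reading procedures—columns right-to-left, top-to-bottom for the NW part, rows top-to-bottom, right-to-left for the SE part—checking that at each stage the addability hypothesis ($\nu_i=\nu_{i+1}+1$, respectively its column version) holds, so that the next operator acts precisely as a one-cell adjunction. The diagonal cut is what guarantees this: a NW-cell always has the neighbour that feeds it already in place when its turn comes, and dually for SE-cells, while the two families never compete because one grows rows through $x$ and the other grows columns through $y$. The commutation and braid relations \eqref{cox} make $\pi_{\sigma(\lambda,NW)}$ and $\pi_{\sigma(\lambda,SE)}$ independent of the chosen reduced words, so it suffices to exhibit one admissible order. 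Carrying out this bookkeeping—tracking the evolving shape against the reading order and confirming that every intermediate shape remains a Ferrers diagram with the requisite one-step difference—is the main obstacle; all of the algebraic content is already contained in the one-cell identity above.
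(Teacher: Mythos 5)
You should first be aware that the paper does not prove this statement at all: it is quoted as Lascoux's Theorem~7 and used as a benchmark. What the paper actually proves, via the SSAF analogue $\Phi$ of RSK (Theorem~\ref{maint}) together with Lemma~\ref{keyintersection} and Proposition~\ref{keyvector}, is the explicit expansion \eqref{0k<m} for staircases truncated by staircases, from which the instances \eqref{ext1}--\eqref{ext2} of \eqref{ext} are deduced; the general Ferrers-shape formula is never re-derived. Your proposal instead follows the operator-theoretic route of Lascoux and of Fu--Lascoux: start from the staircase identity \eqref{cau21} and grow the kernel cell by cell with isobaric divided differences. The paper explicitly contrasts these two roads (``Lascoux's proof \dots and Fu--Lascoux's proof \dots are different from the standard bijective proof based on an RSK-type correspondence, which is precisely what is done here''), so yours is genuinely the other one: it is shorter and covers arbitrary Ferrers shapes, but it leaves the Demazure character in the unevaluated form $\pi_{\sigma(\lambda,SE)}\kappa_{\omega\mu}(y)$, whereas the paper's bijection identifies the indexing composition $(0^{m-k},\alpha)$ and hence the actual tableaux of the Demazure crystal.

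On the substance of your sketch: the algebraic engine is correct. The computation $\pi_i^{(x)}\bigl((1-cx_i)^{-1}\bigr)=(1-cx_i)^{-1}(1-cx_{i+1})^{-1}$ checks out, as does the factorization argument using $\pi_i(gh)=g\,\pi_i(h)$ for $s_i$-invariant $g$, and the reduction of \eqref{ext} to the kernel identity $\pi^{(x)}_{\sigma(\lambda,NW)}\pi^{(y)}_{\sigma(\lambda,SE)}K_\rho=K_\lambda$ is immediate because the two operator families act on disjoint variables. What you have not done --- and candidly flag --- is the combinatorial verification that the prescribed reading orders realize $\lambda$ from $\rho$ by a sequence of admissible one-cell adjunctions: that every intermediate configuration keeps the two relevant rows (resp.\ columns) as left-justified (resp.\ bottom-justified) intervals of lengths differing by exactly one, so that the $s_i$-symmetric factorization applies, and that the diagonal cut prevents the NW and SE families from needing to interleave. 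For the shapes of Figure~\ref{fig:trunc} this is a short check (for $k\le m$ the word \eqref{SE}, with operators applied right to left, fills the SE rows bottom to top and each row left to right, and the column-height hypothesis holds at every step), but for an arbitrary $\lambda$ and an arbitrary choice of cutting cell this bookkeeping is the actual content of Lascoux's theorem and must be written out. Note also that your argument takes the staircase case \eqref{cau21} as given, which is itself a nontrivial theorem (Lascoux's Theorem~6), whereas the paper's bijective method establishes the staircase and truncated-staircase cases simultaneously and from scratch.
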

 For our truncated staircase shape $\lambda$, Figure~\ref{fig:trunc}, if $1\le k\le m\le n$,  $\lambda=(m^{n-m+1},
m-1,\dots,n-k+1),$ where  $n-k\le m-1,$  $\rho=(k,k-1,\ldots,1)$, and the cell $(k+1,1)$, on the top of the first column of $\lambda$, does not belong to $\lambda$. In this case, the NW piece of $\lambda/\rho$ is empty, thus  $\sigma(\lambda,NE)=id$, and the SE piece consists of all cells in $\lambda/\rho$. In  Figure~\ref{fig2:word}, the row reading word,  top to bottom   and right to left, defines the reduced word \begin{equation}\label{SE}{\sigma(\lambda,SE)}= \prod_{i=1}^{k-(n-m)-1}(s_{i+n-k-1}\dots s_i) \prod_{i=0}^{n-m}(s_{m-1}\dots s_{k-(n-m)+i}).\end{equation}

\begin{figure}[here]
\begin{center}
\begin{tikzpicture}[scale=0.68]
\filldraw[color=green!15] (-1,0) rectangle (8,1);
\filldraw[color=green!15] (-1,1) rectangle (8,2);
\filldraw[color=green!15] (-1,2) rectangle (8,3);
\filldraw[color=green!15] (-1,3) rectangle (7,4);
\filldraw[color=green!15] (-1,4) rectangle (6,5);
\draw[line width=1pt] (-1,0) rectangle (8,5); \draw[line width=1pt] (0,0)
-- (0,5);
 \draw[line width= 1pt]
(1,0) -- (1,5);
 \draw[line width= 1pt] (2,0) -- (2,5);
\draw[line width= 1pt] (3,0) -- (3,5); \draw[line width= 1pt] (4,0)
-- (4,5);\draw[line width= 1pt] (5,0) -- (5,5);\draw[line width=
1pt] (6,0) -- (6,5);\draw[line width=
1pt] (7,0) -- (7,5);
 \draw[line width= 1pt] (-1,1)-- (8,1);
\draw[line width= 1pt] (-1,2)-- (8,2);
 \draw[line width= 1pt] (-1,3)--(8,3);
 \draw[line width= 1pt] (-1,4)--(8,4);
 \draw[line width= 2 pt, red ] (3.97,0)--(3.97,1);
 \draw[line width= 2 pt, red ] (4,1)--(3,1);
  \draw[line width= 2 pt, red ] (3,1)--(3,2);
   \draw[line width= 2 pt, red ] (3,2)--(2,2);
    \draw[line width= 2 pt, red ] (2,2)--(2,3);
     \draw[line width= 2 pt, red ] (2,3)--(1,3);
      \draw[line width= 2 pt, red ] (1,3)--(1,4);
       \draw[line width= 2 pt, red ] (1,3.95)--(0,3.95);
       \draw[line width= 2 pt, red ] (0,3.95)--(0,4.95);
       \draw[line width= 2 pt, red ] (-1,4.95)--(0,4.95);
  \draw[line width= 2 pt, green ] (8,0)--(8,3);
\draw[line width= 2 pt, green ] (8,3)--(7,3);
 \draw[line width= 2
pt, green ] (7,3)--(7,4); \draw[line width= 2 pt, green ]
(6,4.03)--(7,4.03);\draw[line width= 2 pt, green ]
(-1,5.03)--(6,5.03);\draw[line width= 2 pt, green ]
(6,4.03)--(6,5.03); \footnotesize
\node at (0.5, 4.5) {$1$}; \node at (1.5, 4.5) {$2$};
\node at (1.5, 3.5) {$\ddots$};  \node at (6.5,
3.5) {$\ddots$};\node at (5.5,
4.5) { n-k};  \node at (3.5, 1.5) {$\ddots$};
\node at (3.5,
2.5) { $\ddots$};  \node at (2.5, 4.5) { $\ddots$};
 \node at (2.5, 2.7) { k-};  \node at (2.5, 2.3) {\scriptsize n+m};  \node
at (4.5, 0.5) { k};
\node at (5.5, 0.5) {$\ldots$};
\node at (7.5, 0.5) { m-1};\node at (7.5, 1.5) {$\vdots$};\node
at (7.5,
2.5) { m-1};
 \draw[arrows=<->,line width=1 pt] (-0.8,-0.5)--(8.0,-0.5); \node at
(4,-1){$m$};
  \draw[arrows=<->,line width=1 pt] (-1.5,0.2)--(-1.5,5.0); \node at
(-2,2.5){$k$};
\end{tikzpicture}
\end{center}
\caption{ The labels in $\lambda/\rho$ indicate the column index of $\lambda$ minus one. The reading word, from  right to left and from the top to bottom, defines the reduced word  ${\sigma(\lambda,SE)}$ \eqref{SE}.}
\label{fig2:word}
\end{figure}
Similarly, in Figure~\ref{fig:trunc}, if $n\ge k\ge m\ge 1$, then  $\rho=(m,m-1,\ldots,1)$, and the cell $(1,m+1)$ immediately after to the end of the first row of $\lambda$, does not belong to $\lambda$. Thus the SE piece of $\lambda/\rho$ is empty,  $\sigma(\lambda,SE)=id$, and the NW piece consists of all cells in $\lambda/\rho$.  Recall that $\overline\lambda$, the conjugate partition of $\lambda$, is the transpose of the Ferrers diagram $\lambda$, and notice that  $\sigma(\overline\lambda,SE)=\sigma(\lambda,NW)$.
 Therefore, the  formula \eqref{ext} is translated  to
\begin{equation}\label{ext1}
\prod_{\begin{smallmatrix}(i,j)\in \lambda\\
k\le m
\end{smallmatrix}}
(1-x_iy_j)^{-1}=\sum_{\mu\in \mathbb{N}^{k}}\widehat{\kappa}_{\mu}(x)(\pi
_{\sigma(\lambda,SE)}\kappa_{\omega \mu}(y));\end{equation}
\begin{equation}\label{ext2}\prod_{\begin{smallmatrix}(i,j)\in \lambda\\
m\le k\end{smallmatrix}}
(1-x_iy_j)^{-1}=\sum_{\mu\in \mathbb{N}^{m}}(\pi
_{\sigma(\lambda,NW)}\widehat{\kappa}_{\mu}(x))\kappa_{\omega \mu}(y).\end{equation}
Indeed \eqref{ext2} is just  \eqref{ext1}, with $x$ and $y$ swapped, followed by the change of basis  \eqref{bruhatdec2} from  Demazure characters to Demazure atoms, where we also use the linearity of Demazure operators. Then we have
{\allowdisplaybreaks
\begin{align} \nonumber  \prod_{\begin{smallmatrix}(i,j)\in
\lambda\\
m\le k\end{smallmatrix}}(1-x_iy_j)^{-1}&=\prod_{\begin{smallmatrix}(j,i)\in
\overline\lambda\\
\nonumber
m\le k\end{smallmatrix}}(1-x_iy_j)^{-1}=\sum_{\mu\in \mathbb{N}^m}
\widehat{\kappa}_{\mu}(y)\pi_{\sigma(\overline\lambda,SE)}
\kappa_{\omega\mu}(x)\\
\nonumber
&=\sum_{\mu\in \mathbb{N}^m}
\widehat{\kappa}_{\mu}(y)\pi_{\sigma(\lambda,NW)}
\kappa_{\omega\mu}(x)=\sum_{\mu\in \mathbb{N}^m}
\widehat{\kappa}_{\mu}(y)\pi_{\sigma(\lambda,NW)}
\sum_{\beta\le \omega\mu}\widehat\kappa_{\beta}(x)\\
\nonumber
&=\sum_{\mu\in \mathbb{N}^m}\sum_{\begin{smallmatrix}\beta\in \mathbb{N}^m\\ \beta\le \omega\mu\end{smallmatrix}}
\widehat{\kappa}_{\mu}(y)\pi_{\sigma(\lambda,NW)}\widehat\kappa_{\beta}(x)
=
\sum_{\beta\in \mathbb{N}^m}\sum_{\begin{smallmatrix}\mu\in \mathbb{N}^m\\ \mu\le \omega\beta\end{smallmatrix}}
\widehat{\kappa}_{\mu}(y)\pi_{\sigma(\lambda,NW)}\widehat\kappa_{\beta}(x)\nonumber\\
&=\sum_{\beta\in \mathbb{N}^m}\pi_{\sigma(\lambda,NW)}\widehat\kappa_{\beta}(x)\sum_{\begin{smallmatrix}\mu\in \mathbb{N}^m\\ \mu\le \omega\beta\end{smallmatrix}}
\widehat{\kappa}_{\mu}(y)=
\sum_{\beta\in \mathbb{N}^m}\pi_{\sigma(\lambda,NW)}\widehat\kappa_{\beta}(x)\kappa_{\omega\beta}(y).
\label{conjugate}
\end{align}}

Next we give a bijective proof of \eqref{ext1}, which amounts to computing the indexing weak composition of the Demazure character $\pi
_{\sigma(\lambda,SE)}~\kappa_{\omega \mu}(y)$, by making explicit the Young tableaux in the Demazure crystal.
\subsection{Our expansions}
We  now use the bijection in Theorem~\ref{maint}
 to give an expansion of the non-symmetric Cauchy kernel for  the shape $\lambda=(m^{n-m+1}, m-1,\dots,n-k+1),$ where $1\leq
m\leq n$, $1\leq k\leq n,$ and $n+1\leq m+k$,  which includes, in particular, the rectangle  \eqref{aaa1},  the staircase  \eqref{cau21}, and implies
the  truncated staircases \eqref{ext1}.

The generating function for the multisets of ordered pairs of positive
integers $\{(a_1, b_1), $ $(a_2, b_2),$ $\ldots ,$ $ (a_r, b_r)\}$, $r\ge 0$,
 where $(a_i,b_i)$ $\in \lambda$, that is, $~ a_i+b_i$ $\leq n+1$, $~1\leq a_i\leq k$, $~ 1\leq b_i\leq m$, $~
1\leq i\leq r$,  weighted by the  contents
$((\alpha,0^{n-k});\; (\delta,0^{n-m}))\in $$\mathbb{N}^k\times\mathbb{N}^m$,  with $\alpha_j$  the number of $ i$'s such that $a_i = j$, and $\delta_j$  the number
of $i$'s such that $b_i = j$, is
 $$\displaystyle{\prod_{(i,j)\in \lambda }(1-x_iy_j)^{-1}=\sum_{\begin{smallmatrix}\{(a_i,b_i)\}_{i=1}^r\\
 r\ge 0\end{smallmatrix}}
x_{a_1} y_{b_1} \cdots  x_{a_r} y_{b_r}}=\sum_{\begin{smallmatrix}\{(a_i,b_i)\}_{ i=1}^ r\\
r\ge 0\end{smallmatrix}}x^\alpha y^{\delta}.$$
Each multiset
$\{(a_1, b_1),$ $ (a_2, b_2),$ $\ldots , $ $(a_r, b_r)\}$, $r\ge 0$, and, hence, each monomial $ x_{a_1}$ $ y_{b_1} \cdots$ $  x_{a_r}
y_{b_r}$, $r\ge 0$, is in one-to-one correspondence   with the lexicographically ordered biword $ \left(
\begin{smallmatrix}a_{r}&\cdots&a_{1}\\ b_{r}&\cdots&b_{1} \end{smallmatrix} \right)$ in the product of alphabets $[k]\times[m]$. In turn, each biword is bijectively mapped  by $\Phi$  into  the pair  $(F,G)$ of SSAFs such that $G$ has entries in $\{a_1,\ldots , a_r\}$, $F$ has entries in $\{b_1,\ldots , b_r\}$, and their shapes $sh(G)=\mu\in\mathbb{N}^k$, and $sh(F)=\beta\in\mathbb{N}^m$, in a same $\mathfrak{S}_n$-orbit, satisfy $(\beta,0^{n-m})\leq  (0^{n-k},\omega\mu)$ with $\omega$ the longest permutation in $\mathfrak{S}_k$. (For $r=0$, put $F=G=\emptyset$.)
Thereby, $  x_{a_1} y_{b_1} \cdots  x_{a_r} y_{b_r} =y^{F}x^{G}$, for all $r\ge 0$. Assume $k\le m$. Since $(\mu,0^{n-k}),$ $(\beta,0^{n-m})$ are in a same $\mathfrak{S}_n$-orbit, $(\mu^+,0^{m-k})=$ $\beta^+\in \mathbb{N}^m$. We then may write
\begin{eqnarray} \label{last1113} \nonumber\prod_{(i,j)\in
\lambda}(1-x_iy_j)^{-1}&=&\sum_{\begin{smallmatrix}\mu \in
\mathbb{N}^k\end{smallmatrix}}\sum_{\begin{smallmatrix}F,G \,\in
SSAF_n\\
sh(F)=\beta\in\mathbb{N}^m,~sh(G)=\mu\\
(\beta,0^{n-m})\le (0^{n-k},\omega\mu) \end{smallmatrix}}y^Fx^G \end{eqnarray}
\begin{eqnarray} &=&
\sum_{\begin{smallmatrix}\mu \in
\mathbb{N}^k \end{smallmatrix}}\left(\sum_{\begin{smallmatrix}G\in
{SSAF}_n\\sh(G)=\mu\end{smallmatrix}}x^{G}\right)\left(\sum_{\begin{smallmatrix}
\beta \in
\mathbb{N}^m\\ (\beta,0^{n-m})\leq (0^{n-k},\omega\mu)
\end{smallmatrix}}\sum_{\begin{smallmatrix}F\in
{SSAF}_n\\sh(F)=\beta\end{smallmatrix}}y^{F}\right) \nonumber
\end{eqnarray}
\begin{eqnarray}\nonumber&=&\sum_{\begin{smallmatrix}\mu \in
\mathbb{N}^k \end{smallmatrix}}\left(\sum_{\begin{smallmatrix}Q\in
SSYT_n\\sh(Q)=\mu^{+}\\K_+(Q)=key(\mu)\end{smallmatrix}}x^{Q}\right)\left(\sum_{\begin{smallmatrix}
\beta \in
\mathbb{N}^m\\ (\beta, 0^{n-m})\leq (0^{n-k}\omega\mu)
\end{smallmatrix}}\sum_{\begin{smallmatrix}P\in
SSYT_n\\sh(P)=\mu^{+}\\K_+(P)=key(\beta)\end{smallmatrix}}y^{P}\right)\\
 &=&
\sum_{\begin{smallmatrix}\mu \in
\mathbb{N}^k\end{smallmatrix}}\widehat{\kappa}_{\mu}(x)\sum_{\begin{smallmatrix}P\in \mathfrak{B}_{(0^{n-k},\omega\mu)}\\ entries \,\le m\end{smallmatrix}}y^P. \label{explain}
\end{eqnarray}
 Given $\mu\in \mathbb{N}^k$, since $m\ge k$,  put $\nu:=(\mu,0^{m-k}, 0^{n-m})$. Then $\omega\nu=(0^{m-k},0^{n-m},\omega\mu)$. Recall that $\mathfrak{B}_{(0^{m-k},\omega\mu^{+},0^{n-m})}=\mathfrak{B}^{(\mu^{+},0^{m-k})}$
 is the crystal graph consisting of all  SSYTs  with shape $(\mu^{+},0^{m-k})$ and entries less or equal than $m$. (The size of the longest permutation $\omega$ should be understood from the context.) Therefore the arrows are $P\overset{i}\rightarrow P'$ for each crystal operator $f_i$ such that $f_i(P)=P'$, $1\le i<m$.  Henceforth, one has
 \begin{equation}\label{keypol}\displaystyle\sum_{\begin{smallmatrix}P\in \mathfrak{B}_{\omega\nu}\\ entries \,\le m\end{smallmatrix}}y^P=\displaystyle\sum_{P\in \mathfrak{B}_{\omega\nu}\cap \,\mathfrak{B}_{(0^{m-k},\omega\mu^{+},0^{n-m})}}y^P,\end{equation}
the weight polynomial  of all  SSYTs in the
$\mathfrak{B}_{\omega\nu}$ with entries less or
equal than $m,$ equivalently, of all  SSYTs with entries $\le m$ and shape $\mu^+\in\mathbb{N}^k$
 whose right key is bounded by $key(0^{n-k},\omega\mu)$. It is also equivalent to consider all SSAFs
such that the shape  has at most $k$ nonzero entries with
zeros in the  last  $n-m$ entries, and is bounded by $\omega \nu$.  At this point we can say that a SSYT, in the intersection of the two Demazure crystals $\mathfrak{B}_{(0^{n-m},0^{m-k},\omega\mu)}\cap \,\mathfrak{B}_{(0^{m-k},\omega\mu^{+},0^{n-m})}$, has shape  in $\mathbb{N}^k$, entries $\le m$, and necessarily its right key is specified by a vector $\zeta\in \mathbb{N}^m$ with at most $k$ non zero entries and satisfying  $\zeta\le (0^{m-k},\omega\mu^+)$. On the other hand,  one also has $(\zeta,0^{n-m})\le (0^{n-m},0^{m-k},\omega\mu)$ despite that $\omega\mu\le \omega\mu^+$. Indeed  $\mathfrak{B}_{\omega\nu}\cap \,\mathfrak{B}_{(0^{m-k},\omega\mu^{+},0^{n-m})}\subseteq  \mathfrak{B}_{(0^{m-k},\alpha,0^{n-m})}$ for some $\alpha\in\mathbb{N}^k$ and $\alpha\le \omega\mu^+$.
Next, we determine the optimal $\alpha\in\mathbb{N}^k$ so  that the Demazure crystal $\mathfrak{B}_{(0^{m-k},\alpha,0^{n-m})}=\mathfrak{B}_{\omega\nu}\cap \,\mathfrak{B}_{(0^{m-k},\omega\mu^{+},0^{n-m})} $.  This shows that \eqref{keypol} is a key polynomial and  simultaneously describes its indexing weak composition. See Example~\ref{intersection}.
\begin{lem}\label{keyintersection} Let
$\gamma\in\mathbb{N}^n$ such that $\gamma^+=(\eta, 0^{n-m})$ is a partition of length $\le m\le n$.  Consider  a  sequence of positive integers $1\le i_M,\ldots,i_1<n$ $($not necessarily a reduced word of $\mathfrak{S}_n$$)$ such that $\kappa_\gamma(y)=\pi_{i_M}\cdots\pi_{i_1}y^{(\eta,0^{n-m})}$. If $j_s,\ldots,j_1$ is the subsequence consisting of  all elements $\ge m$, it holds \begin{eqnarray}{\label{intersec}\displaystyle\sum_{\begin{smallmatrix}P\in \mathfrak{B}_{\gamma}\\ entries\, \le m\end{smallmatrix}}y^P=\displaystyle\sum_{P\in \mathfrak{B}_{\gamma}\cap \mathfrak{B}_{(\omega\eta,0^{n-m})}}y^P=\pi_{i_M}\cdots\tilde\pi_{j_s}\cdots\tilde\pi_{j_1}\cdots\pi_{i_1}y^{(\eta,0^{n-m})},}\end{eqnarray}
 where the tilde ``$\;\;\widetilde{} \;$" means  omission, and $\omega $ is the longest permutation of $\mathfrak{S}_m$.
\end{lem}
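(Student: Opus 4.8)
The plan is to prove the two equalities in \eqref{intersec} separately, the first being essentially a reformulation and the second carrying the combinatorial content. For the first equality, recall that $\mathfrak{B}_\gamma\subseteq\mathfrak{B}^{\gamma^+}=\mathfrak{B}^{(\eta,0^{n-m})}$, and that, with $\omega$ the longest permutation of $\mathfrak{S}_m$, the Demazure crystal $\mathfrak{B}_{(\omega\eta,0^{n-m})}$ is exactly the set of SSYTs of shape $(\eta,0^{n-m})$ with entries $\le m$ (this is the identity recalled just before the lemma, with $\eta$ playing the role of $\mu^+$). Hence $\{P\in\mathfrak{B}_\gamma:\ \text{entries}\le m\}=\mathfrak{B}_\gamma\cap\{P:\ \text{entries}\le m\}=\mathfrak{B}_\gamma\cap\mathfrak{B}_{(\omega\eta,0^{n-m})}$, which is the first equality.

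For the second equality I would run an induction along the operator word. Write $\gamma^{(0)}=(\eta,0^{n-m})$ and, for $1\le p\le M$, set $\gamma^{(p)}=s_{i_p}\gamma^{(p-1)}$ if $\gamma^{(p-1)}_{i_p}>\gamma^{(p-1)}_{i_p+1}$ and $\gamma^{(p)}=\gamma^{(p-1)}$ otherwise, so that by \eqref{dempro} one has $\pi_{i_p}\cdots\pi_{i_1}y^{(\eta,0^{n-m})}=\kappa_{\gamma^{(p)}}$ and $\gamma^{(M)}=\gamma$. On the crystal side let $\mathcal{D}_i$ denote the Demazure closure $B\mapsto\{f_i^k(T):T\in B,\ e_i(T)=0,\ k\ge0\}\setminus\{0\}$; by \eqref{demazuregraph} in the advancing case, and because $\kappa_{\gamma^{(p-1)}}$ is symmetric in $y_{i_p},y_{i_p+1}$ (so $\mathfrak{B}_{\gamma^{(p-1)}}$ is a union of complete $i_p$-strings) in the fixing case, one gets $\mathfrak{B}_{\gamma^{(p)}}=\mathcal{D}_{i_p}(\mathfrak{B}_{\gamma^{(p-1)}})$ in all cases. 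Writing $B^{(p)}:=\mathfrak{B}_{\gamma^{(p)}}$ and letting $B_m:=\mathfrak{B}_{(\omega\eta,0^{n-m})}$ be the SSYTs of shape $(\eta,0^{n-m})$ with entries $\le m$, I define $C^{(p)}:=B^{(p)}\cap B_m$ and claim, by induction on $p$, that $C^{(p)}=\mathcal{D}_{i_p}(C^{(p-1)})$ when $i_p<m$ and $C^{(p)}=C^{(p-1)}$ when $i_p\ge m$.

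The inductive step rests on two closure facts. When $i\ge m$, a genuine $f_i$-step turns an $i$ into $i+1>m$, so no element obtained from $\mathcal{D}_i$ by an actual $f_i$-step lies in $B_m$; combined with the standard stability of the Demazure crystal $B^{(p-1)}$ under the raising operators $e_i$ (which gives $B^{(p-1)}\subseteq\mathcal{D}_i(B^{(p-1)})$), this yields $\mathcal{D}_i(B^{(p-1)})\cap B_m=B^{(p-1)}\cap B_m=C^{(p-1)}$. When $i<m$, the set $B_m$ is the full $\mathfrak{gl}_m$-crystal and so is a union of complete $i$-strings, hence closed under both $f_i$ and $e_i$; pushing $e_i^k$ inside and using this closure in both inclusions gives $\mathcal{D}_i(B^{(p-1)})\cap B_m=\mathcal{D}_i(B^{(p-1)}\cap B_m)=\mathcal{D}_i(C^{(p-1)})$. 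Thus $C^{(M)}=\mathfrak{B}_\gamma\cap B_m$, and each $C^{(p)}$ is again a Demazure crystal inside $B_m$, so it meets every $i$-string with $i<m$ in the empty set, in its head, or in the whole string.

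Finally I would read off the weights. Since $C^{(p-1)}$ has the string property just noted, relations \eqref{demazureoperator}--\eqref{demazureoperator2} give $\sum_{T\in C^{(p)}}y^T=\pi_{i_p}\big(\sum_{T\in C^{(p-1)}}y^T\big)$ when $i_p<m$, while $C^{(p)}=C^{(p-1)}$ contributes the identity when $i_p\ge m$. Iterating from $C^{(0)}=\{Y\}$, of weight $y^{(\eta,0^{n-m})}$, produces exactly $\pi_{i_M}\cdots\widetilde{\pi}_{j_s}\cdots\widetilde{\pi}_{j_1}\cdots\pi_{i_1}y^{(\eta,0^{n-m})}$ with the indices $\ge m$ deleted, which is \eqref{intersec}. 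I expect the main obstacle to be the $i<m$ step: one must guarantee that intersecting with $B_m$ commutes with the closure $\mathcal{D}_i$, i.e. that the relevant string heads and their completions survive the restriction to entries $\le m$. This is precisely where the identification of $B_m$ with the full $\mathfrak{gl}_m$-crystal, and its consequent closure under $e_i,f_i$ for $i<m$, are indispensable.
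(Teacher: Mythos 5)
Your proof is correct and rests on the same two observations as the paper's own argument: a genuine $f_i$-step with $i\ge m$ creates an entry $i+1>m$, so intersecting with $\mathfrak{B}_{(\omega\eta,0^{n-m})}$ retains only what was already there (for $i=m$, only the string heads survive), while for $i<m$ the set of tableaux with entries $\le m$ is a union of complete $i$-strings and hence intersection commutes with the Demazure closure. The only difference is organizational: you run an induction along the operator word, tracking $C^{(p)}=\mathfrak{B}_{\gamma^{(p)}}\cap\mathfrak{B}_{(\omega\eta,0^{n-m})}$ step by step, which makes fully explicit the passage that the paper carries out in a single global description of which vertices and strings are deleted from $\mathfrak{B}_{\gamma}$.
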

\begin{proof}
   Notice that  from the recursive definition of key polynomial or \eqref{dempro},   $\mathfrak{B}_{\gamma}$ $\subseteq $ $\mathfrak{B}^{\gamma^+}$. Also $\mathfrak{B}_{(\omega\eta,0^{n-m})}=  \mathfrak{B}^{\eta}$ $\subseteq \mathfrak{B}^{\gamma^+}$, and $\mathfrak{B}_{\gamma}\cap \mathfrak{B}_{(\omega\eta,0^{n-m})}=$$ \mathfrak{B}_{\gamma}\cap \mathfrak{B}^{\eta}$. If $n=m$ or $\gamma$ has the last $n-m$ entries equal to zero, then $\gamma\le (\omega\eta,0^{n-m})$,  $ \mathfrak{B}_{\gamma}\subseteq $ $\mathfrak{B}^{\eta}$, and $1\le i_M,\ldots,$ $i_1< m$. Henceforth, in this case, $\displaystyle\sum_{\begin{smallmatrix}P\in \mathfrak{B}_{\gamma}\\ entries\, \le m\end{smallmatrix}}y^P=\displaystyle\sum_{P\in \mathfrak{B}_{\gamma}}y^P=\kappa_\gamma(y)$. Otherwise, the intersection of the two graphs $ \mathfrak{B}_{\gamma}\cap \mathfrak{B}^{\eta}$
   is  the graph obtained from $\mathfrak{B}_{\gamma}$ by deleting all the vertices consisting of  SSYTs with entries $>m$, and, therefore, all $i$-edges incident on them (either getting in or out), in particular, those  with $i\ge m$.   This means that all $i$-strings, with $i\ge m$, in $\mathfrak{B}_{\gamma}$, are deleted, while just the heads
remain, in the case of $i=m$. Furthermore, every $i$-string with $i<m$ whose head has an entry $>m$ is ignored.  In conclusion, $ \mathfrak{B}_{\gamma}\cap \mathfrak{B}^{\eta}$ consists of the $i$-strings in $ \mathfrak{B}_{\gamma}$ with $i<m$  whose heads have entries $\le m$.
From the combinatorial interpretation of Demazure operators $\pi_i$ in terms of  the $i$-strings of a crystal graph, \eqref{demazuregraph}, \eqref{demazureoperator}, \eqref{demazureoperator2}, this means
   we are deleting in $\pi_{i_M}\cdots$$\pi_{i_2}$$\pi_{i_1}y^{(\eta,0^{n-m})}$  the action of the Demazure operators $\pi_i$ for $i\ge m$,
and,  thanks to
 \eqref{dempro}, one still has  a key polynomial, precisely, \eqref{intersec}.
\end{proof}
We now  calculate the indexing weak composition of the key polynomial \eqref{intersec} in the case $\eta=(\mu^+, 0^{m-k})$ and $\gamma=\omega\nu=$$\omega(\mu,0^{m-k},0^{n-m})$, and, therefore, the key polynomial \eqref{keypol}.
\begin{prop} \label{keyvector} Let $1\leq
k\leq m\leq n,$ and $n-m+1\leq k$. Given $\mu \in\mathbb{N}^k$,
 let $\alpha=(\alpha_1,\ldots,\alpha_k)\in \mathbb{N}^k$ such that for each $ i=k,\ldots, 1$, the entry $\alpha_i$ is the maximum element among the last  $\min\{i,n-m+1\}$ entries of $\omega\mu$  after deleting $\alpha_j$, for $i<j\le k$.    Then, if  $\nu=(\mu,0^{m-k},0^{n-m})$,
\begin{enumerate}

\item
 \begin{eqnarray}
\sum_{\begin{smallmatrix}P\in \mathfrak{B}_{\omega\nu}\\ entries\, \le m\end{smallmatrix}}y^P&=&
\sum_{P\in \mathfrak{B}_{\omega\nu}\cap \,\mathfrak{B}_{(0^{m-k},\omega\mu^{+},0^{n-m})}}y^P
=\sum_{\begin{smallmatrix}P\in\mathfrak{B}_{(0^{m-k},\alpha,0^{n-m})}\end{smallmatrix}}  y^P\nonumber\\
&=&\pi_{\sigma(\lambda,SE)}\kappa_{(\omega\mu,0^{n-k})}(y)=\kappa_{(0^{m-k},\alpha,0^{n-m})}(y).\end{eqnarray}

\item $\mathfrak{B}_{\omega(\mu,0^{m-k},0^{n-m})}\cap \,\mathfrak{B}_{(0^{m-k},\omega\mu^{+},0^{n-m})}=\mathfrak{B}_{(0^{m-k},\alpha,0^{n-m})}$ and $ \omega\mu\le \alpha\le\omega\mu^+$.

    In particular, when $m=n$, then $\alpha=\omega\mu$; and  when $m+k= n+1$, then $\alpha=\omega\mu^{+}$ and $\kappa_{(0^{m-k},\omega\mu^{+},0^{n-m})}(y)=s_{(\mu^+,0^{m-k})}(y_1,\dots,y_m)$ is a Schur polynomial.
\end{enumerate}
\end{prop}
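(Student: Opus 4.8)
The plan is to reduce the whole statement to a single combinatorial computation of a bounded (``windowed'') bubble sort, using Lemma~\ref{keyintersection} together with the bubble-sort description \eqref{dempro} of the Demazure operators. First I would dispose of the opening equality: every $P\in\mathfrak{B}_{\omega\nu}$ has shape $\nu^+=(\mu^+,0^{n-k})$, and $\mathfrak{B}_{(0^{m-k},\omega\mu^+,0^{n-m})}=\mathfrak{B}^{(\mu^+,0^{m-k})}$ is the full crystal of that shape with entries $\le m$ (since $\mathfrak{B}_{\omega\eta}=\mathfrak{B}^{\eta}$). Hence the restriction ``entries $\le m$'' is precisely intersection with $\mathfrak{B}_{(0^{m-k},\omega\mu^+,0^{n-m})}$, and the first equality holds for free.

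Next I would apply Lemma~\ref{keyintersection} with $\gamma=\omega\nu=(0^{n-k},\omega\mu)$ and $\eta=(\mu^+,0^{m-k})$, so that $(\eta,0^{n-m})=\nu^+$. Fixing a sequence of bubble-sort operators $\pi_{i_M}\cdots\pi_{i_1}$ with $\kappa_{\omega\nu}(y)=\pi_{i_M}\cdots\pi_{i_1}y^{\nu^+}$, the lemma expresses the intersection sum as the same sequence with every operator of index $\ge m$ deleted, applied to $y^{\nu^+}$, and it guarantees that the outcome is again a genuine key polynomial. It then remains (A) to identify the index of that key polynomial as $(0^{m-k},\alpha,0^{n-m})$, and (B) to recognise the retained operators as $\pi_{\sigma(\lambda,SE)}$ acting on $\kappa_{(\omega\mu,0^{n-k})}(y)$, which by \eqref{dempro} is the key polynomial $\kappa_{\beta}$ with $\beta$ the bubble sort of $(\omega\mu,0^{n-k})$ along the word \eqref{SE}; the two descriptions must then be matched, and this matching is exactly the bijective content of \eqref{ext1}.

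The heart of the argument, and the main obstacle, is the purely combinatorial claim that applying the bubble-sort word $\sigma(\lambda,SE)$ of \eqref{SE} to $(\omega\mu,0^{n-k})$ yields exactly $(0^{m-k},\alpha,0^{n-m})$ with the recursively defined $\alpha$. I would track the composition run by run through the two nested products in \eqref{SE}, reading off from Figure~\ref{fig2:word} that each run $s_{m-1}\cdots s_j$ carries the largest available entry rightward and freezes it at position $m$ — the barrier created by the absence of any operator $\pi_i$ with $i\ge m$. An entry destined for position $m-k+i$ competes only with entries lying in a block reaching leftward from the barrier whose length is capped at $n-m+1$; this is precisely the window $\min\{i,n-m+1\}$, and the greedy rightward motion realises the rule ``maximum among the last $\min\{i,n-m+1\}$ entries, after deleting the already-placed $\alpha_j$'', filling positions $m,m-1,\dots,m-k+1$ in turn while leaving positions $1,\dots,m-k$ and $m+1,\dots,n$ zero. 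Making this rigorous requires an inductive bookkeeping of which entries are frozen at the barrier and which remain mobile, and the delicate point is the interaction between consecutive runs as the window shrinks with $i$.

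Finally, for Part (2), the intersection is again a Demazure crystal: by the structural description in the proof of Lemma~\ref{keyintersection} it is assembled from the $i$-strings with $i<m$ via the Demazure recursion \eqref{demazuregraph}, \eqref{demazureoperator2}, so it equals $\mathfrak{B}_\delta$ for some $\delta$; comparing weight generating functions with (A), and using that distinct indices give distinct (linearly independent) key polynomials, forces $\delta=(0^{m-k},\alpha,0^{n-m})$. The bounds then follow from the equivalence $\beta\le\alpha\iff\mathfrak{B}_\beta\subseteq\mathfrak{B}_\alpha$: the inclusion $\mathfrak{B}_{(0^{m-k},\alpha,0^{n-m})}\subseteq\mathfrak{B}^{(\mu^+,0^{m-k})}=\mathfrak{B}_{(0^{m-k},\omega\mu^+,0^{n-m})}$ gives $\alpha\le\omega\mu^+$, while sliding the block $\omega\mu$ rightward shows $(0^{m-k},\omega\mu,0^{n-m})\le(0^{n-k},\omega\mu)=\omega\nu$ in the orbit Bruhat order \eqref{inducedbruhat}, whence $\mathfrak{B}_{(0^{m-k},\omega\mu,0^{n-m})}\subseteq\mathfrak{B}_{\omega\nu}\cap\mathfrak{B}^{(\mu^+,0^{m-k})}=\mathfrak{B}_{(0^{m-k},\alpha,0^{n-m})}$ and so $\omega\mu\le\alpha$. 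The special cases drop out by evaluating the window: when $m=n$ it is always $1$, so the greedy rule reads off $\omega\mu$ unchanged and $(0^{m-k},\alpha,0^{n-m})=\omega\nu$; when $m+k=n+1$ it is the full $i$, so the greedy rule sorts $\mu$ into $\omega\mu^+$, and since $(0^{m-k},\omega\mu^+)$ is weakly increasing, $\kappa_{(0^{m-k},\omega\mu^+,0^{n-m})}$ collapses to the Schur polynomial $s_{(\mu^+,0^{m-k})}(y_1,\dots,y_m)$.
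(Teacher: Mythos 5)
Your proposal is correct and follows essentially the same route as the paper: reduce to Lemma~\ref{keyintersection} with $\gamma=\omega\nu$, $\eta=(\mu^+,0^{m-k})$, then identify the surviving operators with the word \eqref{SE} and read off $\alpha$ from the windowed bubble sort with the barrier at position $m$ — the paper's own justification of the greedy rule is given at the same descriptive level as yours, so no extra rigor is lost. The only (harmless) divergence is in Part (2), where you obtain $\omega\mu\le\alpha\le\omega\mu^+$ from crystal containments, while the paper gets $\omega\mu\le\alpha$ directly from the fact that each bubble-sorting swap in the construction of $\alpha$ increases the orbit Bruhat order \eqref{inducedbruhat}.
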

\begin{proof}
 $1$. Recalling the action  of  Demazure operators  $\pi_i$ on key polynomials  via bubble sorting operators on their indexing weak compositions \eqref{dempro}, and since  $\omega\nu=(0^{n-k},\omega\mu)$, one may write,
 \begin{eqnarray}\label{eqpre} \kappa_{\omega\nu}(y)&=&\prod_{i=1}^{k}(\pi_{i+n-k-1}\dots \pi_i)\kappa_{(\omega\mu,0^{n-k})}(y)
 \end{eqnarray}
 \begin{eqnarray}
&=&
\prod_{i=1}^{k-(n-m)-1}(\pi_{i+n-k-1}\dots \pi_i)\label{eq0}
\\ \label{eq}
&&\textbf{\LARGE.}\prod_{i=0}^{n-m}(\pi_{m-1+i}\dots \pi_{k-(n-m)+i})
\kappa_{(\omega\mu,0^{n-k})}(y).
\end{eqnarray}
The Demazure operators in \eqref{eqpre}  act as bubble sorting operators on the weak composition
$(\omega\mu,0^{n-k})$, shifting $n-k$ times to the right each of the $k$ entries of $\omega\mu$.  
This can be done by shifting, first, the last $n-m+1$ entries of $\omega\mu$ \eqref{eq} and then \eqref{eq0} the remaining first $k-(n-m)-1\ge 0$ entries.
From Lemma~\ref{keyintersection}, with $\eta=(\mu^+, 0^{m-k})$ and $\gamma=\omega\nu$, omitting in
 \eqref{eq} the operators with indices $\ge m$, one has
\begin{align}\nonumber\displaystyle\sum_{\begin{smallmatrix}P\in \mathfrak{B}_{\omega\nu}\\ entries \,\le m\end{smallmatrix}}y^P &=\displaystyle\sum_{P\in \mathfrak{B}_{\omega\nu}\cap \mathfrak{B}^{(\mu^{+},0^{m-k})}}y^P=\pi_{\sigma(\lambda,SE)}\kappa_{(\omega\mu,0^{n-k})}(y)
\\
&=\prod_{i=1}^{k-(n-m)-1}(\pi_{i+n-k-1}\dots \pi_i) \label{keypol10}\\
&\kern3cm
\textbf{\LARGE.}\prod_{i=0}^{n-m}(\pi_{m-1}\dots \pi_{k-(n-m)+i})\label{keypol0}
\kappa_{(\omega\mu,0^{n-k})}(y)\\
&=\kappa_{(0^{m-k},\alpha,0^{n-m})}(y).\label{keypol2}
\end{align}
The Demazure operators in  \eqref{keypol0} act as bubble sorting operators on the weak composition $(\omega\mu,0^{m-k},0^{n-m})$, shifting $m-k$ times to the right the last $n-m+1$ entries of $\omega\mu$, and sorting them in ascending order. Next, the  operators \eqref{keypol10} act similarly on the resulting vector ignoring the entry $m$, then ignoring the entry $m-1$, and so on.
Thus the weak composition indexing  the new key polynomial  $\kappa_{(0^{m-k},\alpha,0^{n-m})}$ \eqref{keypol2} is such that $\alpha=(\alpha_1,\dots,\alpha_k)$, where for each $i=k,\ldots,1$,
$\alpha_i$ is the maximum element of the last  $\min\{i,n-m+1\}$ entries of $\omega\mu$  after deleting $\alpha_j$, for $i<j\le k$.
(After some point, the number of remaining entries in $\omega\mu$ is less than $n-m+1$ and   just the $i$ remaining entries are considered.)

$2$.  It is a consequence of $1$,  recalling that, in Section~\ref{sec:Bruhat}, the left Bruhat order (implies Bruhat order) in $\mathfrak{S}_k\mu$ is described by bubble sorting operators. An alternative proof comes from the construction of $\alpha$, provided $\omega\mu$, and using the Bruhat order
 characterization  \eqref{inducedbruhat} in an orbit.
Start with $\alpha^0:=\omega\mu$. Next put, for $i=0,\dots, k-1$,
 $\alpha^{i+1}$   equal to the result of swapping in $\alpha^i$  the $i+1$-th last entry of $\alpha^i$  with the maximum among the last $min\{k-i,n-m+1\}$ entries in $\alpha^i$, after ignoring the $i$ last entries. Eventually, one obtains $\alpha$. In
each step, one has  $\alpha^i \leq \alpha^{i+1}$, for $i\ge 0$, and finally
$\omega\mu\le \alpha$.
\end{proof}
Example~\ref{intersection} illustrates this proposition.
\begin{thm}\label{minmin} Let $\lambda=(m^{n-m+1}, m-1,\dots,n-k+1),$ where $1\leq
k, m\leq n,$ and $n+1\leq m+k$, be the Ferrers shape in Figure~\ref{fig:trunc}. Then we have the following explicit expansions in the SSYTs in  the Demazure crystal
\begin{enumerate}
\item If $1\leq k\leq m$,
\begin{align} \nonumber  \prod_{\begin{smallmatrix}(i,j)\in
\lambda\\
k\le m\end{smallmatrix}}(1-x_iy_j)^{-1}
&=\sum_{\mu\in \mathbb{N}^k}
\widehat{\kappa}_{\mu}(x)\pi_{\sigma(\lambda,SE)}
\kappa_{\omega\mu}(y)\\
&=\sum_{\begin{smallmatrix}\mu \in
\mathbb{N}^k
 \end{smallmatrix}}\widehat{\kappa}_{\mu}(x)\kappa_{(0^{m-k},\alpha)}(y),
\label{last111}
\end{align}
\noindent where $\alpha\in\mathbb{N}^k$ is defined in Proposition~\ref{keyvector} for each $\mu\in \mathbb{N}^k$.
\item If $1\leq m\leq k$,
\begin{align} \nonumber\label{lastone}
\prod_{\begin{smallmatrix}(i,j)\in
\lambda\\
m\le k\end{smallmatrix}}(1-x_iy_j)^{-1}&= \sum_{\mu\in \mathbb{N}^m}
\pi_{\sigma(\lambda,NW)}\hat\kappa_\mu(x)
\kappa_{\omega\mu}(y)\\
\nonumber
&=\sum_{\mu\in \mathbb{N}^m}
\widehat{\kappa}_{\mu}(y)\pi_{\sigma(\lambda,NW)}
\kappa_{\omega\mu}(x)\\
&=\sum_{\begin{smallmatrix}\mu \in
\mathbb{N}^m
\end{smallmatrix}}\kappa_{(0^{k-m},\alpha')}(x)\widehat{\kappa}_{\mu}(y),
\end{align}
 \noindent  where
 $\alpha'\in\mathbb{N}^m$  is defined similarly,
swapping $k$ with $m$ in Proposition~\ref{keyvector}, for each $\mu\in\mathbb{N}^m$.
\end{enumerate}
\end{thm}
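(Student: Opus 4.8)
The plan is to assemble the bijection of Theorem~\ref{maint} with the crystal-theoretic computation of Proposition~\ref{keyvector}, which together already contain essentially all of the work. Concretely, I would read the non-symmetric Cauchy kernel over $\lambda$ as the generating function for multisets of cells of $\lambda$, equivalently for lexicographically ordered biwords $\binom{a_r\cdots a_1}{b_r\cdots b_1}$ over $[k]\times[m]$ subject to $a_i+b_i\le n+1$. Applying $\Phi$ and invoking Theorem~\ref{maint}, each such biword corresponds bijectively to a pair $(F,G)$ of SSAFs whose associated monomial is $y^Fx^G$, whose shapes $sh(G)=\mu\in\mathbb{N}^k$ (with the last $n-k$ entries zero) and $sh(F)=\beta\in\mathbb{N}^m$ (with the last $n-m$ entries zero) lie in a common $\mathfrak{S}_n$-orbit, and which satisfy the Bruhat inequality $(\beta,0^{n-m})\le(0^{n-k},\omega\mu)$.

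For part~(1), with $1\le k\le m$, I would group the resulting sum according to $\mu=sh(G)$. The $x$-factor, $\sum_{sh(G)=\mu}x^G$, is exactly the Demazure atom $\widehat\kappa_\mu(x)$ by the combinatorial description of atoms as weight sums over SSAFs. The $y$-factor is $\sum_{(\beta,0^{n-m})\le(0^{n-k},\omega\mu)}\sum_{sh(F)=\beta}y^F$; passing to SSYTs through $\Psi$ and using Theorem~\ref{SAFSSYT} and Corollary~\ref{colSAFSSYT} to rewrite shapes as right keys, this is precisely the weight polynomial of the SSYTs in the Demazure crystal $\mathfrak{B}_{(0^{n-k},\omega\mu)}$ with entries $\le m$. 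This reproduces the chain culminating in \eqref{explain}. Proposition~\ref{keyvector}(1) then rewrites this inner $y$-sum as $\pi_{\sigma(\lambda,SE)}\kappa_{\omega\mu}(y)=\kappa_{(0^{m-k},\alpha)}(y)$, which yields both displayed equalities of part~(1): the first is Lascoux's \eqref{ext1}, now obtained bijectively, and the second is the explicit form.

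Part~(2), with $1\le m\le k$, I would deduce from part~(1) by the symmetry of the RSK analogue recorded in Remark~\ref{remark2}.2: swapping the two rows of each biword interchanges the insertion and recording fillings, hence exchanges the roles of $x$ and $y$ and of $k$ and $m$. This gives at once the first equality of part~(2), namely Lascoux's \eqref{ext2}; and the change-of-basis computation \eqref{conjugate} --- expanding $\kappa_{\omega\mu}$ into atoms through \eqref{bruhatdec2} and using the linearity of $\pi_{\sigma(\lambda,NW)}$ --- together with Proposition~\ref{keyvector} applied with $k$ and $m$ interchanged produces the explicit $\alpha'$ form.

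Because the two genuinely hard ingredients are already in hand --- the Bruhat characterization of the image of $\Phi$ in Theorem~\ref{maint}, and the determination of the indexing composition $\alpha$ by the bubble-sort analysis on crystals in Proposition~\ref{keyvector} --- the only remaining difficulty is organizational bookkeeping. The point I would watch most carefully is the exact matching of the zero-paddings ($0^{n-k}$, $0^{n-m}$, $0^{m-k}$), so that the Bruhat condition $(\beta,0^{n-m})\le(0^{n-k},\omega\mu)$ delivered by Theorem~\ref{maint} coincides with the Demazure-crystal membership condition feeding Proposition~\ref{keyvector}, and ensuring that the symmetry argument for part~(2) correctly tracks the passage between the atom and character bases.
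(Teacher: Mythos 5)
Your proposal is correct and follows essentially the same route as the paper: part (1) is exactly the chain from the generating-function/biword reading of the kernel through $\Phi$ and Theorem~\ref{maint} to \eqref{explain}, closed off by Proposition~\ref{keyvector}, and part (2) is the same symmetry-plus-conjugate-shape reduction to part (1) combined with the change-of-basis computation \eqref{conjugate}. The only slight discrepancy is bookkeeping: the row-swap symmetry directly yields the second displayed line of part (2) (with $\pi_{\sigma(\lambda,NW)}$ acting on the $x$-character), and it is \eqref{conjugate} that then produces the first line with $\pi_{\sigma(\lambda,NW)}$ acting on the atom $\hat\kappa_\mu(x)$ — but since you invoke both ingredients, all three expressions are accounted for.
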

\begin{proof} $1$. Identity~\eqref{last111} follows from \eqref{explain} and Proposition~\ref{keyvector}.

$2$. Considering $\overline\lambda$, the conjugate of $\lambda$, and the expansion \eqref{last111}, one has
\begin{align} \nonumber  \prod_{\begin{smallmatrix}(i,j)\in
\lambda\\
m\le k\end{smallmatrix}}(1-x_iy_j)^{-1}&=\prod_{\begin{smallmatrix}(j,i)\in
\overline\lambda\\
m\le k\end{smallmatrix}}(1-x_iy_j)^{-1}=\sum_{\mu\in \mathbb{N}^m}
\widehat{\kappa}_{\mu}(y)\pi_{\sigma(\overline\lambda,SE)}
\kappa_{\omega\mu}(x) \\ \label{lefthand}
& =\sum_{\mu\in \mathbb{N}^m}
\widehat{\kappa}_{\mu}(y)\pi_{\sigma(\lambda,NW)}
\kappa_{\omega\mu}(x) =
 \sum_{\begin{smallmatrix}\mu \in
\mathbb{N}^m
\end{smallmatrix}}\widehat{\kappa}_{\mu}(y)\kappa_{(0^{k-m},\alpha')}(x),
\end{align}
where $\alpha'\in\mathbb{N}^m$  is defined,
swapping $k$ with $m$, in Proposition~\ref{keyvector}, for each $\mu\in\mathbb{N}^m$.
On the other hand, using the change of basis \eqref{bruhatdec2}, one has \eqref{conjugate}, which together with \eqref{lefthand} gives
\begin{align}\nonumber\prod_{\begin{smallmatrix}(i,j)\in
\lambda\\
m\le k\end{smallmatrix}}(1-x_iy_j)^{-1}&=\sum_{\mu\in \mathbb{N}^m}\pi_{\sigma(\lambda,NW)}\widehat\kappa_{\mu}(x)\kappa_{\omega\mu}(y)\\ \nonumber
&=\sum_{\begin{smallmatrix}\mu \in
\mathbb{N}^m
\end{smallmatrix}}\widehat{\kappa}_{\mu}(y)\kappa_{(0^{k-m},\alpha')}(x).
\end{align}
\end{proof}
In Figure~\ref{fig:trunc}, if $m=n$,
  ${\lambda}=(n,n-1,\dots,n-k+1),$ with $1\leq k\leq n,$ and from  identity \eqref{last111} and Proposition~\ref{keyvector}, one has
\begin{eqnarray}\label{refine} \nonumber\prod_{(i,j)\in \lambda
}(1-x_iy_j)^{-1}=
\sum_{\begin{smallmatrix}\mu\in \mathbb{N}^{k}\\
\nu=(\mu,0^{n-k})\end{smallmatrix}}\widehat{\kappa}_{\nu}(x)\kappa_{\omega\nu}(y).
\end{eqnarray}
(Similarly, for $k=n$, in identity \eqref{lastone}.) In particular, if $m=n=k$ ($\lambda=\overline\lambda$), we recover \eqref{cau21} from  both previous identities.
When $n+1=m+k$, from Proposition~\ref{keyvector}, identity \eqref{last111} becomes \eqref{last},
and hence we recover identity \eqref{aaa1} with $k\le m$. Similarly, \eqref{lastone} leads to \eqref{aaa1} with $m\le k$.
\begin{ex} \label{intersection} Let $n=5,~ m=4$ $\ge  k=3$, $\mu=(1,1,2)$, and $\nu=(1,1,2,0,0)$.
The black and blue tableaux constitute  the vertices of the Demazure crystal
$\mathfrak{B}_{\omega\nu}=\mathfrak{B}_{(0,0,2,1,1)}=$ $\mathfrak{B}_{s_2s_1s_3s_2s_4s_3(2,1,1,0,0)}$.
One has $\pi_2\pi_1\pi_3\pi_2\pi_3x^{(2,1,1,0,0)}$$=\pi_2\pi_1$$\pi_2\pi_3x^{(2,1,1,0,0)}$$=\kappa_{(0,1,2,1,0)}(x)$.
{\em(}The  shortest element in the coset $s_2s_1s_3s_2s_3<s_2>$ is $s_2s_1s_2s_3$.{\em )}  The black and the red tableaux are the vertices of  the
crystal $
\mathfrak{B}_{(0,\omega\mu^{+},0)}=$ $\mathfrak{B}_{(0,1,1,2,0)}$ $=$ $\mathfrak{B}_{s_1s_2s_3s_2s_1\nu^+}$.
The intersection  $\mathfrak{B}_{\omega\nu}\cap
\mathfrak{B}_{(0,\omega\mu^{+},0)}$ consists of the black
tableaux which constitute the vertices of the Demazure crystal
$\mathfrak{B}_{(0,\alpha,0)}=$ $\mathfrak{B}_{(0,1,2,1,0)}=$ $\mathfrak{B}_{s_2s_1s_2s_3(2,1,1,0,0)}$, with $\alpha=(1,2,1)$ defined in Proposition~\ref{keyvector}. $($Note that the crystal graph does not have  all the edges represented. Only those referring to the words under consideration.$)$

\begin{tikzpicture}[scale=0.75]
\draw[line width=1pt] (4,17.5) rectangle (4.5,18);
\draw[line width=1pt] (4.5,17.5) rectangle (5,18);
\draw[line width=1pt] (4,18) rectangle (4.5,18.5);
\draw[line width=1pt] (4,18.5) rectangle (4.5,19);
\node at (4.25,17.75){\small 1};\node at (4.75,17.75){\small 1};\node at
(4.25,18.25){\small 2};\node at (4.25,18.75){\small 3};
\draw[arrows=->,line width=1pt]  (3.9, 17.3)--(3.1,16) ;\node at
(3.96,16.8){\scriptsize$1$};
\draw[arrows=->,line width=1pt] (5.1,17.3) -- (5.9, 16);\node at
(5.7,16.8){\scriptsize$3$};
\draw[line width=1pt] (2,15) rectangle (2.5,15.5);
\draw[line width=1pt] (2.5,15) rectangle (3,15.5);
\draw[line width=1pt] (2,15.5) rectangle (2.5,16);
\draw[line width=1pt] (2,16) rectangle (2.5,16.5);
\node at (2.25,15.25){\small 1};\node at (2.75,15.25){\small 2};\node at
(2.25,15.75){\small 2};\node at (2.25,16.25){\small 3};

\draw[line width=1pt] (6,15) rectangle (6.5,15.5);
\draw[line width=1pt] (6.5,15) rectangle (7,15.5);
\draw[line width=1pt] (6,15.5) rectangle (6.5,16);
\draw[line width=1pt] (6,16) rectangle (6.5,16.5);
\node at (6.25,15.25){\small 1};\node at (6.75,15.25){\small 1};\node at
(6.25,15.75){\small 2};\node at (6.25,16.25){\small 4};

\draw[line width=1pt, color=blue] (8,15) rectangle (8.5,15.5);
\draw[line width=1pt, color=blue] (8.5,15) rectangle (9,15.5);
\draw[line width=1pt, color=blue] (8,15.5) rectangle (8.5,16);
\draw[line width=1pt, color=blue] (8,16) rectangle (8.5,16.5);
\node at (8.25,15.25){\small 1};\node at (8.75,15.25){\small 1};\node at
(8.25,15.75){\small 2};\node at (8.25,16.25){\small 5};

\draw[arrows=->,line width=1pt] (2.5,14.9) -- (2.5, 14.1);\node at
(2.85,14.5){\scriptsize$2$};
 \draw[arrows=->,line width=1pt, color=red] (3.1,14.9) -- (3.9,
14.1);\node at
(3.8,14.7){{\color{red}\scriptsize$3$}};
 \draw[arrows=->,line width=1pt]  (5.9, 14.8)-- (5.1,14);\node at
(5.2,14.7){\scriptsize$1$};
\draw[arrows=->,line width=1pt] (6.5, 14.9)--(6.5,14.1) ;\node at
(6.85,14.5){\scriptsize$2$};
\draw[arrows=->,line width=1pt,color=blue] (8.5, 14.9)-- (8.5,14.1);\node at
(8.85,14.5){{\color{blue}\scriptsize$2$}};
\draw[arrows=->,line width=1pt,color=blue] (9.1,14.8) -- (9.9, 14.1);\node at
(9.7,14.7){{\color{blue}\scriptsize$1$}};
\draw[arrows=->,line width=1pt, color=blue] (7.1,15.75) -- (7.9,
15.75);\node at
(7.6,16.1){{\color{blue}\scriptsize$4$}};

\draw[line width=1pt] (2,12.5) rectangle (2.5,13);
\draw[line width=1pt] (2.5,12.5) rectangle (3,13);
\draw[line width=1pt] (2,13) rectangle (2.5,13.5);
\draw[line width=1pt] (2,13.5) rectangle (2.5,14);
\node at (2.25,12.75){\small 1};\node at (2.75,12.75){\small 3};\node at
(2.25,13.25){\small 2};\node at (2.25,13.75){\small 3};

\draw[line width=1pt] (4,12.5) rectangle (4.5,13);
\draw[line width=1pt] (4.5,12.5) rectangle (5,13);
\draw[line width=1pt] (4,13) rectangle (4.5,13.5);
\draw[line width=1pt] (4,13.5) rectangle (4.5,14);
\node at (4.25,12.75){\small 1};\node at (4.75,12.75){\small 2};\node at
(4.25,13.25){\small 2};\node at (4.25,13.75){\small 4};

\draw[line width=1pt] (6,12.5) rectangle (6.5,13);
\draw[line width=1pt] (6.5,12.5) rectangle (7,13);
\draw[line width=1pt] (6,13) rectangle (6.5,13.5);
\draw[line width=1pt] (6,13.5) rectangle (6.5,14);
\node at (6.25,12.75){\small 1};\node at (6.75,12.75){\small 1};\node at
(6.25,13.25){\small 3};\node at (6.25,13.75){\small 4};

\draw[line width=1pt,color=blue] (8,12.5) rectangle (8.5,13);
\draw[line width=1pt,color=blue] (8.5,12.5) rectangle (9,13);
\draw[line width=1pt,color=blue] (8,13) rectangle (8.5,13.5);
\draw[line width=1pt,color=blue] (8,13.5) rectangle (8.5,14);
\node at (8.25,12.75){\small 1};\node at (8.75,12.75){\small 1};\node at
(8.25,13.25){\small 3};\node at (8.25,13.75){\small 5};

\draw[line width=1pt,color=blue] (10,12.5) rectangle (10.5,13);
\draw[line width=1pt,color=blue] (10.5,12.5) rectangle (11,13);
\draw[line width=1pt,color=blue] (10,13) rectangle (10.5,13.5);
\draw[line width=1pt,color=blue] (10,13.5) rectangle (10.5,14);
\node at (10.25,12.75){\small 1};\node at (10.75,12.75){\small 2};\node at
(10.25,13.25){\small 2};\node at (10.25,13.75){\small 5};

\draw[arrows=->,line width=1pt, color=red]  (2.5, 12.4)--(2.5,11.6) ;\node at
(2.85,12){{\color{red}\scriptsize$3$}};
\draw[arrows=->,line width=1pt]   (4.5, 12.4)--(4.5,11.6);\node at
(4.85,12){\scriptsize$2$};
\draw[arrows=->,line width=1pt] (6.5, 12.4)--(6.5,11.6) ;\node at
(6.85,12){\scriptsize$1$};
\draw[arrows=->,line width=1pt, color=blue]   (8.5,
12.4)--(8.5,11.6);\node at
(8.85,12){{\color{blue}\scriptsize$3$}};
\draw[arrows=->,line width=1pt,color=blue] (9.1,12.4) -- (9.9, 11.4);\node at
(9.7,12.3){{\color{blue}\scriptsize$1$}};
\draw[arrows=->,line width=1pt,color=blue] (11.1,12.4) -- (11.9,
11.4);\node at
(11.7,12.3){{\color{blue}\scriptsize$2$}};


\draw[line width=1pt,color=red] (2,10) rectangle (2.5,10.5);
\draw[line width=1pt,color=red] (2.5,10) rectangle (3,10.5);
\draw[line width=1pt,color=red] (2,10.5) rectangle (2.5,11);
\draw[line width=1pt,color=red] (2,11) rectangle (2.5,11.5);
\node at (2.25,10.25){\small 1};\node at (2.75,10.25){\small 4};\node at
(2.25,10.75){\small 2};\node at (2.25,11.25){\small 3};

\draw[line width=1pt] (4,10) rectangle (4.5,10.5);
\draw[line width=1pt] (4.5,10) rectangle (5,10.5);
\draw[line width=1pt] (4,10.5) rectangle (4.5,11);
\draw[line width=1pt] (4,11) rectangle (4.5,11.5);
\node at (4.25,10.25){\small 1};\node at (4.75,10.25){\small 3};\node at
(4.25,10.75){\small 2};\node at (4.25,11.25){\small 4};

\draw[line width=1pt] (6,10) rectangle (6.5,10.5);
\draw[line width=1pt] (6.5,10) rectangle (7,10.5);
\draw[line width=1pt] (6,10.5) rectangle (6.5,11);
\draw[line width=1pt] (6,11) rectangle (6.5,11.5);
\node at (6.25,10.25){\small 1};\node at (6.75,10.25){\small 2};\node at
(6.25,10.75){\small 3};\node at (6.25,11.25){\small 4};

\draw[line width=1pt,color=blue] (8,10) rectangle (8.5,10.5);
\draw[line width=1pt,color=blue] (8.5,10) rectangle (9,10.5);
\draw[line width=1pt,color=blue] (8,10.5) rectangle (8.5,11);
\draw[line width=1pt,color=blue] (8,11) rectangle (8.5,11.5);
\node at (8.25,10.25){\small 1};\node at (8.75,10.25){\small 1};\node at
(8.25,10.75){\small 4};\node at (8.25,11.25){\small 5};

\draw[line width=1pt,color=blue] (10,10) rectangle (10.5,10.5);
\draw[line width=1pt,color=blue] (10.5,10) rectangle (11,10.5);
\draw[line width=1pt,color=blue] (10,10.5) rectangle (10.5,11);
\draw[line width=1pt,color=blue] (10,11) rectangle (10.5,11.5);
\node at (10.25,10.25){\small 1};\node at (10.75,10.25){\small 2};\node at
(10.25,10.75){\small 3};\node at (10.25,11.25){\small 5};

\draw[line width=1pt,color=blue] (12,10) rectangle (12.5,10.5);
\draw[line width=1pt,color=blue] (12.5,10) rectangle (13,10.5);
\draw[line width=1pt,color=blue] (12,10.5) rectangle (12.5,11);
\draw[line width=1pt,color=blue] (12,11) rectangle (12.5,11.5);
\node at (12.25,10.25){\small 1};\node at (12.75,10.25){\small 3};\node at
(12.25,10.75){\small 2};\node at (12.25,11.25){\small 5};

\draw[arrows=->,line width=1pt, color=red] (2.5,9.9) -- (2.5, 9.1);\node at
(2.85,9.5){{\color{red}\scriptsize$3$}};
\draw[arrows=->,line width=1pt] (4.5,9.9) -- (4.5, 9.1);\node at
(4.85,9.5){\scriptsize$2$};
\draw[arrows=->,line width=1pt] (6.5,9.9) -- (6.5, 9.1);\node at
(6.85,9.5){\scriptsize$1$};
\draw[arrows=->,line width=1pt, color=blue] (8.5,9.9) -- (8.5, 9.1);\node at
(8.85,9.5){{\color{blue}\scriptsize$1$}};
\draw[arrows=->,line width=1pt,color=blue] (11.1,9.9) -- (11.9, 9.1);\node at
(11.7,9.7){{\color{blue}\scriptsize$1$}};
\draw[arrows=->,line width=1pt,color=blue] (13.1,9.9) -- (13.9, 9.1);\node at
(13.7,9.7){{\color{blue}\scriptsize$2$}};

\draw[line width=1pt,color=red] (2,7.5) rectangle (2.5,8);
\draw[line width=1pt,color=red] (2.5,7.5) rectangle (3,8);
\draw[line width=1pt,color=red] (2,8) rectangle (2.5,8.5);
\draw[line width=1pt,color=red] (2,8.5) rectangle (2.5,9);
\node at (2.25,7.75){\small 1};\node at (2.75,7.75){\small 4};\node at
(2.25,8.25){\small 2};\node at (2.25,8.75){\small 4};

\draw[line width=1pt] (4,7.5) rectangle (4.5,8);
\draw[line width=1pt] (4.5,7.5) rectangle (5,8);
\draw[line width=1pt] (4,8) rectangle (4.5,8.5);
\draw[line width=1pt] (4,8.5) rectangle (4.5,9);
\node at (4.25,7.75){\small 1};\node at (4.75,7.75){\small 3};\node at
(4.25,8.25){\small 3};\node at (4.25,8.75){\small 4};

\draw[line width=1pt] (6,7.5) rectangle (6.5,8);
\draw[line width=1pt] (6.5,7.5) rectangle (7,8);
\draw[line width=1pt] (6,8) rectangle (6.5,8.5);
\draw[line width=1pt] (6,8.5) rectangle (6.5,9);
\node at (6.25,7.75){\small 2};\node at (6.75,7.75){\small 2};\node at
(6.25,8.25){\small 3};\node at (6.25,8.75){\small 4};

\draw[line width=1pt,color=blue] (8,7.5) rectangle (8.5,8);
\draw[line width=1pt,color=blue] (8.5,7.5) rectangle (9,8);
\draw[line width=1pt,color=blue] (8,8) rectangle (8.5,8.5);
\draw[line width=1pt,color=blue] (8,8.5) rectangle (8.5,9);
\node at (8.25,7.75){\small 1};\node at (8.75,7.75){\small 2};\node at
(8.25,8.25){\small 4};\node at (8.25,8.75){\small 5};

\draw[line width=1pt,color=blue] (12,7.5) rectangle (12.5,8);
\draw[line width=1pt,color=blue] (12.5,7.5) rectangle (13,8);
\draw[line width=1pt,color=blue] (12,8) rectangle (12.5,8.5);
\draw[line width=1pt,color=blue] (12,8.5) rectangle (12.5,9);
\node at (12.25,7.75){\small 2};\node at (12.75,7.75){\small 2};\node at
(12.25,8.25){\small 3};\node at (12.25,8.75){\small 5};

\draw[line width=1pt,color=blue] (14,7.5) rectangle (14.5,8);
\draw[line width=1pt,color=blue] (14.5,7.5) rectangle (15,8);
\draw[line width=1pt,color=blue] (14,8) rectangle (14.5,8.5);
\draw[line width=1pt,color=blue] (14,8.5) rectangle (14.5,9);
\node at (14.25,7.75){\small 1};\node at (14.75,7.75){\small 3};\node at
(14.25,8.25){\small 3};\node at (14.25,8.75){\small 5};

\draw[arrows=->,line width=1pt, color=red]   (2.5, 7.4)--(2.5,6.6);\node at
(2.85,7){{\color{red}\scriptsize$2$}};
\draw[arrows=->,line width=1pt, color=red]  (5.1, 7.4)--(5.9,6.6) ;\node at
(5.1,6.9){{\color{red}\scriptsize$1$}};
\draw[arrows=->,line width=1pt] (6.5, 7.4)-- (6.5,6.6) ;\node at
(6.85,7){\scriptsize$2$};
\draw[arrows=->,line width=1pt, color=blue]  (8.5, 7.4)--(8.5,6.6) ;\node at
(8.85,7){{\color{blue}\scriptsize$1$}};
\draw[arrows=->,line width=1pt,color=blue] (9.1,7.4) -- (9.9, 6.4);\node at
(9.7,7.2){{\color{blue}\scriptsize$2$}};
\draw[arrows=->,line width=1pt,color=blue] (13.1,7.4) -- (13.9, 6.4);\node at
(13.7,7.2){{\color{blue}\scriptsize$2$}};

\draw[line width=1pt,color=red] (2,5) rectangle (2.5,5.5);
\draw[line width=1pt,color=red] (2.5,5) rectangle (3,5.5);
\draw[line width=1pt,color=red] (2,5.5) rectangle (2.5,6);
\draw[line width=1pt,color=red] (2,6) rectangle (2.5,6.5);
\node at (2.25,5.25){\small 1};\node at (2.75,5.25){\small 4};\node at
(2.25,5.75){\small 3};\node at (2.25,6.25){\small 4};

\draw[line width=1pt] (6,5) rectangle (6.5,5.5);
\draw[line width=1pt] (6.5,5) rectangle (7,5.5);
\draw[line width=1pt] (6,5.5) rectangle (6.5,6);
\draw[line width=1pt] (6,6) rectangle (6.5,6.5);
\node at (6.25,5.25){\small 2};\node at (6.75,5.25){\small 3};\node at
(6.25,5.75){\small 3};\node at (6.25,6.25){\small 4};

\draw[line width=1pt,color=blue] (8,5) rectangle (8.5,5.5);
\draw[line width=1pt,color=blue] (8.5,5) rectangle (9,5.5);
\draw[line width=1pt,color=blue] (8,5.5) rectangle (8.5,6);
\draw[line width=1pt,color=blue] (8,6) rectangle (8.5,6.5);
\node at (8.25,5.25){\small 2};\node at (8.75,5.25){\small 2};\node at
(8.25,5.75){\small 4};\node at (8.25,6.25){\small 5};

\draw[line width=1pt,color=blue] (10,5) rectangle (10.5,5.5);
\draw[line width=1pt,color=blue] (10.5,5) rectangle (11,5.5);
\draw[line width=1pt,color=blue] (10,5.5) rectangle (10.5,6);
\draw[line width=1pt,color=blue] (10,6) rectangle (10.5,6.5);
\node at (10.25,5.25){\small 1};\node at (10.75,5.25){\small 3};\node at
(10.25,5.75){\small 4};\node at (10.25,6.25){\small 5};

\draw[line width=1pt,color=blue] (14,5) rectangle (14.5,5.5);
\draw[line width=1pt,color=blue] (14.5,5) rectangle (15,5.5);
\draw[line width=1pt,color=blue] (14,5.5) rectangle (14.5,6);
\draw[line width=1pt,color=blue] (14,6) rectangle (14.5,6.5);
\node at (14.25,5.25){\small 2};\node at (14.75,5.25){\small 3};\node at
(14.25,5.75){\small 3};\node at (14.25,6.25){\small 5};

\draw[arrows=->,line width=1pt, color=red] (2.5,4.9) -- (2.5, 4.1);\node at
(2.85,4.5){{\color{red}\scriptsize$1$}};

\draw[arrows=->,line width=1pt, color=blue] (8.5,4.9) -- (8.5, 4.1);\node at
(8.85,4.5){{\color{blue}\scriptsize$2$}};

\draw[line width=1pt,color=red] (2,2.5) rectangle (2.5,3);
\draw[line width=1pt,color=red] (2.5,2.5) rectangle (3,3);
\draw[line width=1pt,color=red] (2,3) rectangle (2.5,3.5);
\draw[line width=1pt,color=red] (2,3.5) rectangle (2.5,4);
\node at (2.25,2.75){\small 2};\node at (2.75,2.75){\small 4};\node at
(2.25,3.25){\small 3};\node at (2.25,3.75){\small 4};

\draw[line width=1pt,color=blue] (8,2.5) rectangle (8.5,3);
\draw[line width=1pt,color=blue] (8.5,2.5) rectangle (9,3);
\draw[line width=1pt,color=blue] (8,3) rectangle (8.5,3.5);
\draw[line width=1pt,color=blue] (8,3.5) rectangle (8.5,4);
\node at (8.25,2.75){\small 2};\node at (8.75,2.75){\small 3};\node at
(8.25,3.25){\small 4};\node at (8.25,3.75){\small 5};

\draw[arrows=->,line width=1pt, color=blue] (8.5, 2.4)--(8.5,1.6) ;\node at
(8.85,2){{\color{blue}\scriptsize$2$}};


\draw[line width=1pt,color=blue] (8,0) rectangle (8.5,0.5);
\draw[line width=1pt,color=blue] (8.5,0) rectangle (9,0.5);
\draw[line width=1pt,color=blue] (8,0.5) rectangle (8.5,1);
\draw[line width=1pt,color=blue] (8,1) rectangle (8.5,1.5);
\node at (8.25,0.25){\small 3};\node at (8.75,0.25){\small 3};\node at
(8.25,0.75){\small 4};\node at (8.25,1.25){\small 5};

\end{tikzpicture}
\end{ex}

\section*{Acknowledgments}
 We thank  Alain Lascoux for letting us know \cite{lascouxcrystal}, and his  paper with Amy M.~Fu~\cite{fulascoux};
  Vic Reiner for suggesting to us to extend  our main theorem to  truncated staircase shapes; and  Viviane Pons for letting us
 know about the implementation
 of key polynomials for Sage-Combinat \cite{pons}.

This work was partially supported by the Centro de Matem\'atica da
Universidade de Coimbra (CMUC), funded by the European Regional
Development Fund through the program COMPETE and by the Portuguese
Government through the FCT --- Funda\c c\~ao para a Ci\^encia e a Tecnologia
under the project PEst-C/MAT/UI0324/2011.
The second author was  also supported by  Funda\c c\~ao para a Ci\^encia e a Tecnologia (FCT) through the Grant SFRH / BD / 33700 / 2009.

 \textit{Email addresses:}\\ 
 
 oazenhas@mat.uc.pt (Olga Azenhas), CMUC, Department of Mathematics, University of Coimbra, 3001-501 Coimbra, Portugal.\\
 
 aramee@mat.uc.pt (Aram Emami) CMUC, Department of Mathematics, University of Coimbra, 3001-501 Coimbra, Portugal.

\end{document}